\theoremstyle{definition}
\newtheorem{thm}{Theorem}[section]
\newtheorem{prop}[thm]{Proposition}
\newtheorem{lem}[thm]{Lemma}
\newtheorem{rem}[thm]{Remark}
\newtheorem{cor}[thm]{Corollary}
\newtheorem{ex}[thm]{Example}
\newtheorem*{ack}{Acknowledgment}
\newtheorem*{mt}{Main Theorem}
\newcommand{\bracket}[1]{{\langle {#1} \rangle}}
\newcommand{\erase}[1]{}
\title{$K3$ surfaces and log del Pezzo surfaces of index three}
\author[H.~Ohashi]{Hisanori Ohashi}
\author[S.~Taki]{Shingo Taki}
\address{Graduate School of Mathematics,
 Nagoya University, Nagoya 464-8602, Japan}
\email{pioggia@kurims.kyoto-u.ac.jp}
\address{School of Information Environment, Tokyo Denki University,
2-1200 Muzai Gakuendai, Inzai-shi, Chiba 270-1382, Japan}
\email{taki@sie.dendai.ac.jp}
\date{\today}
\subjclass[2010]{Primary 14J26, 14J28; Secondary 14J27, 14J45, 14J50}
\keywords{log del Pezzo surfaces, non-symplectic automorphisms of $K3$ surfaces}
\thanks{}
\begin{document}
\bibliographystyle{amsalpha+}

\begin{abstract}
We use classification of non-symplectic automorphisms of $K3$ surfaces to obtain a 
partial classification of log del Pezzo surfaces of index three. We characterize them as 
those with 
"Multiple Smooth Divisor Property", whose definition we will give.
Our methods include the construction of right resolutions of quotient singularities of index three
and an analysis of automorphism-stable elliptic fibrations on $K3$ surfaces combined with 
lattice theory.
In particular we find several log del Pezzo surfaces of Picard number one 
with non-toric singularities of index three. 
As a byproduct, we also obtain the holomorphic description of {\em{all}} 
non-symplectic automorphisms of $K3$ surfaces of order three %and elliptic type.
whose fixed locus contains a smooth curve of genus $g\geq 2$ (called \em{of elliptic type}). 
\end{abstract}

\maketitle

\section{Introduction}\label{Introduction}

We work over the complex numbers $\mathbb{C}$. A normal complete surface $Z$ is called a 
{\em{log del Pezzo surface}} if 
it has only log terminal singularities and the anticanonical divisor $-K_Z$ is ample.
Log del Pezzo surfaces constitute one of the most interesting classes of rational surfaces;
they naturally appear in the outputs of the (log) minimal model program and their classification 
is an interesting problem. 
The {\em{index}} $k$ of $Z$ is the least positive integer such that $k K_Z$ is a Cartier divisor.

Log del Pezzo surfaces with index $k=1$ are sometimes called weak log del Pezzo surfaces and
their classification is a classical topic. 
In the index $k=2$ Alexeev and Nikulin \cite{AN}
(over $\mathbb{C}$)
and Nakayama \cite{Nakayama} (char. $p\geq 0$ and also for log pairs) 
gave complete classifications, whose methods are
independent in nature. 
For index $k\geq 3$, although there exist a large number of results, 
the complete classification is not available so far. 
In this direction we mention \cite{Dais} where toric 
log del Pezzo surfaces of index three with Picard number one are classified. 
We examine his list from our viewpoint in Section \ref{examples}.

The idea used in \cite{AN} was to relate log del Pezzo surfaces
to $K3$ surfaces. It might be compared to the researches of 
log Enriques surfaces, namely those surfaces $Z$ with only log terminal singularities,
$H^1(Z, \mathcal{O}_Z)=0$ and some multiple of the canonical divisor is trivial. 
They are often studied via $K3$ surfaces, too. In this direction we mention 
\cite{Zhang}, where Zhang described some log Enriques surfaces that arise as quotients 
of $K3$ surfaces by non-symplectic automorphisms of order three.
He uses automorphisms without fixed curves of genus $\geq 2$, while our focus in this paper is on  
automorphisms with those fixed curves. 
It might be interesting to note that non-symplectic automorphisms behave better
under our assumption, since the large genus curve
rigidifies the picture, for example as in Propositions \ref{ellip1} and \ref{howto}.

In this paper we want to discuss a possible generalization of 
the ideas of \cite{AN} to treat log del Pezzo surfaces of index three %$3$.
Accordingly we will give a
partial classification of log del Pezzo surfaces of index three and 
find some examples of 
non-toric log del Pezzo surfaces with Picard number one, see
Remark \ref{nontoric}.
Although our classification can cover only a restricted portion of log del Pezzo 
surfaces of index three, we remark that our approach has the following advantages:
since we start with $K3$ surfaces satisfying an appropriate condition, 
the characterizing property of our log del Pezzo surfaces becomes geometric. Thus our 
classification covers some surfaces with higher Picard number, compared to 
many previous results which assumed the Picard number to be one.
Also this implies that our description should be better suited for considerations of
family and moduli, since the moduli theory of $K3$ surfaces is well-understood.
On the other hand, our method includes the detailed descriptions of {\em{all}} 
non-symplectic automorphisms of $K3$ surfaces of order three %and elliptic type,
whose fixed locus contains a smooth curve of genus $g\geq 2$ (called of elliptic type), 
compared to the descriptions of generic ones in \cite{Taki, AS}.

We can give a rough sketch of \cite{AN} as follows.
A log del Pezzo surface $Z$ of index $k\leq 2$ always has a smooth element $C\in |-2K_Z|$ disjoint
from singularities. This fact is called the {\em{smooth divisor theorem}}.
Also they define the "right" resolution $Z_r$ of singularities of index two and 
using them, they construct from $Z$ a 
$K3$ surface $X$ with a non-symplectic involution of elliptic type.
Here a {\em{$K3$ surface}} $X$ is a smooth projective surface with 
$K_X\sim 0$ and $H^1(X, \mathcal{O}_X)=0$.
An automorphism of $X$ is {\em{non-symplectic}} if it acts on $H^{2,0} (X)$ nontrivially.
An involution $\varphi$ of $X$ is 
{\em{of elliptic type}}
if the fixed locus $X^{\varphi}=\{x\in X\mid \varphi (x)=x\}$ 
contains a smooth curve of genus $g\geq 2$. 
Conversely, from a pair $(X,\varphi )$ consisting of a $K3$ surface and a non-symplectic 
involution of elliptic type, they construct
a log del Pezzo surface $Z$ of index $k\leq 2$. 
Thus the classification of $Z$ reduces to that of $(X, \varphi )$. 
Next, using a sophisticated argument of reflection groups, they define
the {\em{root invariant}} of $(X,\varphi )$ which describes the set of
negative curves (in other words, {\em{roots}}) on the right resolution $Z_r$. 
This in turn determines the set of singularities of $Z$. 
Compared to root invariants, the lattice $H^2 (X, \mathbb{Z})^{\varphi}$ is called
the {\em{main invariant}}.
This theory results in a large table of 
main invariants and (extremal) root invariants.
It contains not only the information of $\mathrm{Sing}(Z)$ but
also of the set of negative curves on the right resolution $Z_r$.

To generalize the result of Alexeev and Nikulin \cite{AN}, we define analogues of non-symplectic involutions of elliptic type 
as follows. We call a non-symplectic automorphism $\varphi$ (of any order) of a $K3$ 
surface $X$ {\em{of elliptic type}} if $X^{\varphi}$ contains a curve of genus $g\geq 2$. 
%A basic observation shows in Proposition \ref{howto}
In Proposition \ref{howto} we show 
that we can construct a log del Pezzo surface
of index $k=3$ (or $1$) 
from a non-symplectic automorphism $\varphi$ of order three and of elliptic type.
Also
we obtain a necessary condition for a log del Pezzo surface $Z$ to arise from 
the pair $(X, \varphi )$:
\begin{itemize}
\item[$\star$] The linear system $|-3K_Z|$ contains a divisor of the form $2C$, where $C$ is a 
smooth curve which does not meet the singularities.
\end{itemize}
We call this property the {\em{multiple smooth divisor property}}. See the sentence 
after Proposition \ref{howto}.
It is an analogue of the smooth divisor theorem in the case of involutions.
We should notice that the multiple smooth divisor property does not hold in general and 
there are many log del Pezzo surfaces of index three which do not correspond to $K3$ surfaces,
see Section \ref{examples}.
In what follows we restrict ourselves to those $Z$ satisfying this property $\star$
to pursue the application of the theory of $K3$ surfaces.
Our Theorem \ref{cover} will show that conversely 
this condition is sufficient for $Z$ to come from a $K3$ 
surface and a non-symplectic automorphism of order three of elliptic type.

The difficulty lies in our next step, 
obtaining the list of singularities $\mathrm{Sing} (Z)$ 
because in our case $X/\varphi$ has singularities in general and 
the method of reflection group seems to be difficult to generalize.
Instead here,
we make use of the existence of $\varphi$-stable elliptic fibrations to 
overcome the problem.
Our starting point is the general Lemma \ref{enef} together with 
the topological classification of non-symplectic automorphisms of order three
on $K3$ surfaces by Artebani, Sarti \cite{AS} and the second author \cite{Taki}. 
We remark that their description of elliptic fibrations on $X$
depends on the genericity assumption of N\'{e}ron-Severi lattice, $S_X=S_X^{\varphi}$. 
There are examples of $(X, \varphi )$ which does not satisfy this condition,
see Examples \ref{exa1}, \ref{exa2}. 
To extend their description of elliptic fibrations to {\em{all}} $(X, \varphi )$ of 
elliptic type, we use
Proposition \ref{fiberwise} that describes the fiberwise information of $X^{\varphi}$. 
The combination of these local and global descriptions of fixed locus
leads us to the desired description of  
{\em{arbitrary}} $(X, \varphi)$ of elliptic type,
see Theorems \ref{ellip3}, \ref{ellip4}.  
After %preparing 
some preparatory Lemmas \ref{sectio}, \ref{nocurve1} and \ref{nocurve2}, 
we can obtain the final list of $\mathrm{Sing} (Z)$.

\begin{mt} 
\begin{enumerate}
\item Let $\varphi $ be a non-symplectic automorphism of order three on a $K3$ surface $X$ of elliptic type, i.e., $X^{\varphi}$ contains a curve of genus $g\geq 2$. Then there exists a canonical contraction of $X/\varphi $ onto a log del Pezzo surface $Z$.  The surface $Z$ has index three 
except when $S_X^{\varphi}\simeq U(3)$. 
\item Conversely if $Z$ is a log del Pezzo surface of index three which satisfies
the multiple smooth divisor property ($\star$ above), then $Z$ can be obtained by
way of (1).
\item The possible singularity $\mathrm{Sing} (Z)$ and the Picard number $\rho (Z)$ for
log del Pezzo surfaces of index three with multiple smooth divisor property are
as in the lists of Theorems \ref{ellip3} and \ref{singu}. In particular, 
there are $11$ deformation families, $33$ combinations of singularities and 
$8$ ones with Picard number $1$. 
\end{enumerate}
\end{mt}

In the final table, we can observe some similarity between lists of 
indices $k=2$ and $k=3$. The singularities of index 
three on $Z$ depend only on the "main invariant" $H^2 (X, \mathbb{Z})^{\varphi}$.
As to rational double points, within each main invariant there exists a maximal one 
and other possibilities are obtained as the Dynkin subdiagram of that.

In Section \ref{singularities} we explain log terminal singularities of index three.
The notation of singularities is fixed and explained in this section.
In Section \ref{ord3} we study non-symplectic automorphisms of order three of elliptic type.
We classify the singular fibers of 
$\varphi$-stable elliptic fibrations in Theorems \ref{ellip3} and \ref{ellip4}. 
In Section \ref{quot} we discuss the construction of log del Pezzo surfaces from $(X, \varphi )$
and obtain the list of $\mathrm{Sing} (Z)$, Theorems \ref{ellip3} and \ref{singu}.
In Section \ref{gyaku} we show conversely that the multiple smooth divisor 
property implies the existence of $(X, \varphi )$ for index $k=3$. 
In Section \ref{examples} we discuss some examples.\\

\noindent {\bf{Notation and Conventions.}} 
The symbols $A_l, D_l, E_l$ are used to denote negative-definite even lattices defined by the 
Dynkin diagrams of each type. The same symbols also denote Du Val singularities 
on log del Pezzo surfaces; from the context, it will be clear which object they actually denote.
The notation $A_l(\alpha, \beta), D_l(\alpha)$ will be used to denote singularities of index three.
Their definitions are in Section \ref{singularities}.
Here the index of a singularity is the least positive integer $k$ such that $kK_{Z}$ is Cartier 
locally around the singular point.

A hyperbolic lattice of rank $r$ is a lattice with signature $(1,r-1)$. 
We use $U$ to denote the even unimodular hyperbolic lattice of rank $2$.
For a lattice $L$, $L(n)$ is the lattice whose bilinear form is multiplied with $n$. 
A nondegenerate lattice $L$ is $p$-elementary if for the natural inclusion $L\subset L^*$
$L^*/L$ is a $p$-elementary abelian group.
 
We denote by $S_X$ the N\'{e}ron-Severi lattice of 
the $K3$ surface $X$.
Since the algebraic equivalence and linear equivalence coincide on $X$,
we often denote the equality in $S_X$ by $\sim$. We use $=$ to emphasize the 
{\em{equality of divisors}}. We say that $\varphi$ {\em{preserves}} or {\em{stabilizes}} 
a curve $C$ if $\varphi (C)=C$.
We say $C$ is {\em{fixed}} if moreover $\varphi |_C = id_C$, namely $C\subset X^{\varphi}$. 
An elliptic fibration on $X$ is $\varphi$-{\em{stable}} if for the general fiber $F$ we have 
$\varphi (F)\sim F$.

On a log del Pezzo surface $Z$, we use 
$\equiv$ to denote the numerical equivalence.

\begin{ack}
We thank Doctor Takuzo Okada for finding examples of log del Pezzo surfaces and 
helpful discussions.
We are grateful to Professor Noboru Nakayama for reading manuscript, giving
better proofs and 
suggestions. We are grateful to Professors Viacheslav V. Nikulin and JongHae Keum 
for warm encouragement and discussions. 

The first author was supported by global COE program of Kyoto University and
JSPS Grant-in-Aid (S), No. 22224001. 
His work was supported by KAKENHI 21840031. 
The second author's research was supported by Basic Science Research Program through the 
National Research Foundation (NRF) of Korea funded by the Ministry of education,
Science and Technology (2007-C00002).  
\end{ack}

\erase{\begin{thm}[\cite{Dais}]
Up to isomorphism, there are exactly 18 toric log del Pezzo
surfaces with Picard number 1 and index 3, namely
\begin{tabular}
[c]{|c|c|c|c|}\hline
\emph{No.} & $X_{\Delta}$ & \emph{No.} & $X_{\Delta}$\\\hline\hline
\emph{(i)} & $\mathbb{P}(1,1,3)$ & \emph{(x)} &
$\mathbb{P}(1,5,9)$\\\hline
\emph{(ii)} & $\mathbb{P}(1,3,4)$ & \emph{(xi)} &
$\mathbb{P}(1,2,9)$\\\hline
\emph{(iii)} & $\mathbb{P}(2,3,5)$ & \emph{(xii)} &
$\mathbb{P}(1,2,3)/(\mathbb{Z}/3\mathbb{Z})$\\\hline
\emph{(iv)} & $\mathbb{P}(1,1,2)/(\mathbb{Z}/3\mathbb{Z})$ &
\emph{(xiii)} & $\mathbb{P}(1,1,2)/(\mathbb{Z}/2\mathbb{Z}%
)\times(\mathbb{Z}/3\mathbb{Z})$\\\hline
\emph{(v)} & $\mathbb{P}(1,1,6)$ & \emph{(xiv)} &
$\mathbb{P}(1,1,6)/(\mathbb{Z}/2\mathbb{Z})$\\\hline
\emph{(vi)} & $\mathbb{P}(1,6,7)$ & \emph{(xv)} &
$\mathbb{P}(1,4,15)$\\\hline
\emph{(vii)} & $\mathbb{P}(1,3,4)/(\mathbb{Z}/2\mathbb{Z})$ &
\emph{(xvi)} & $\mathbb{P}(1,1,3)/(\mathbb{Z}/5\mathbb{Z}%
)$\\\hline
\emph{(viii)} & $\mathbb{P}(1,2,3)/(\mathbb{Z}/3\mathbb{Z})$
& \emph{(xvii)} & $\mathbb{P}(1,2,9)/(\mathbb{Z}%
/2\mathbb{Z})$\\\hline
\emph{(ix)} & $\mathbb{P}/(\mathbb{Z}/9\mathbb{Z}%
)$ & \emph{(xviii)} & $\mathbb{P}(1,1,6)/(\mathbb{Z}/4\mathbb{Z})$\\\hline
\end{tabular}
\end{thm}

\subsection{Notations}
We fix notations used in this paper.

Let $X$ be a $K3$ surface with non-symplectic automorphisms $\varphi$ of order 3, 
$\widetilde{X}$ the blow-up of $X$ at isolated fixed points of $\varphi$ and 
$Y:=X/\varphi$.
In generally $Y$ has singular points. 
Then let $\nu:\widetilde{Z}\rightarrow Y$ be a minimal resolution of singularities. 
And $\sigma:\widetilde{Z}\rightarrow Z$ is a contraction of negative curves on $\widetilde{Z}$

Hence we have the following diagram.

\[\xymatrix{
 X \ar[d]_{\pi}   & \ar[l] \widetilde{X} \ar[d]^{\tilde{\pi}} \\
Y & \ar[l]_{\nu} \widetilde{Z} \ar[d]^{\sigma} \\
 & Z\\
}\]
}

\section{log terminal singularities of index three}\label{singularities}

In this section we explain log terminal singularities of index three
and fix the notation.

Two-dimensional log terminal singularities are 
quotient singularities \cite{kawamata} and 
they are classified in \cite{brieskorn}.
Recall that a subgroup $G$ of $\mathrm{GL} (2, \mathbb{C})$
is {\em{small}} if it does not contain any reflections.
Let $G$ be a finite small subgroup of 
$\mathrm{GL} (2, \mathbb{C})$
and $\mathbb{C}^2/G$ be the 
quotient singularity. Then it has index three if and only if $[G \colon G\cap \mathrm{SL} (2,
\mathbb{C})]=3$. It is not difficult to choose $G$ with this property out of 
\cite[Satz 2.9 and 2.11]{brieskorn} as follows. 
\begin{longtable}{lll} \toprule
$G$ & $\Gamma$ & conditions \\ \midrule
$C_{n,q}$ & $\langle n,q \rangle$ & $0<q<n, (n,q)=1$\\
& & and $\frac{1+q}{n}\in \frac{1}{3}\mathbb{Z}\setminus\mathbb{Z}$ \\ 
$G_0:=(\mathbb{Z}_6, \mathbb{Z}_6; \mathbb{D}_2, \mathbb{D}_2)$ &
$\langle 3;2,1;2,1;2,1 \rangle$ & \\
$G_n:=(\mathbb{Z}_6, \mathbb{Z}_6; \mathbb{D}_n, \mathbb{D}_n)$ & 
$\langle 2;2,1;2,1;n,n-3 \rangle$ & $n \not\equiv 0\ (3)$, $n\geqq 4$ \\ \bottomrule
\end{longtable}
%\begin{longtable}{lll} \toprule
%$G$ & $\Gamma$ & condition \\ \midrule
%$C_{3n,2n-1}$ & $\langle 3n,2n-1 \rangle$ & $n\equiv 1\ (3)$ \\ 
%$C_{3n,n-1}$  & $\langle 3n,n-1 \rangle$  & $n\equiv 2\ (3)$ \\
%$C_{3n,2n-1}$ & $\langle 3n,2n-1 \rangle$ & $n\equiv 0\ (3)$ \\ \midrule
%$(\mathbb{Z}_6, \mathbb{Z}_6; \mathbb{D}_2, \mathbb{D}_2)$ &
%$\langle 3;2,1;2,1;2,1 \rangle$ & \\
%$(\mathbb{Z}_6, \mathbb{Z}_6; \mathbb{D}_n, \mathbb{D}_n)$ & 
%$\langle 2;2,1;2,1;n,n-3 \rangle$ & $n \equiv 1\ (3)$ \\ 
%$(\mathbb{Z}_6, \mathbb{Z}_6; \mathbb{D}_n, \mathbb{D}_n)$ & 
%$\langle 2;2,1;2,1;n,n-3 \rangle$ & $n \not\equiv 0\ (3)$ \\ \bottomrule
%\end{longtable}
\noindent Here $G$ is the small subgroup of $\mathrm{GL} (2, \mathbb{C})$ and 
$\Gamma$ implicitly describes the exceptional curves of minimal resolution of $\mathbb{C}^2/G$.
The group $C_{n,q}$ is a cyclic group of order $n$
generated by the matrix $\begin{pmatrix} \zeta_n  & 0 \\ 0 & \zeta_n^{q} \end{pmatrix}$
where $\zeta_n$ is a primitive $n$-th root of unity. In this case the singularity 
$\mathbb{C}^2/C_{n,q}$ is the famous Hirzebruch-Jung singularity \cite[III-5.]{BHPV}.
As such, the notation $\langle n,q \rangle$ for exceptional curves describes the chain 
of smooth rational curves whose self-intersection numbers $-b_1, \cdots, -b_r, b_i\geq 2$ 
are determined uniquely by
expressing $n/q$ as the following continued fraction: 
\begin{equation*}
b_1 - \cfrac{1}{b_2 -
	  \cfrac{1}{b_3 - 
	  \cfrac{1}{b_4 - \cdots}}}.
\end{equation*}
The notation $(\mathbb{Z}_6, \mathbb{Z}_6; \mathbb{D}_n, \mathbb{D}_n)$ of other groups in our cases 
refers just the group $G$ generated by 
\[
\begin{pmatrix} \zeta_6  & 0 \\ 0 & \zeta_6 \end{pmatrix},
\begin{pmatrix} 0  & 1 \\ -1 & 0 \end{pmatrix} \text{ and }
\begin{pmatrix} \zeta_{2n}  & 0 \\ 0 & \zeta_{2n}^{-1} \end{pmatrix}.
\]
(Among these generators the first generates the cyclic group $\mathbb{Z}_6$ and the latter two 
generate the binary dihedral group 
$\mathbb{D}_n$ of order $4n$.) As the exceptional curves of quotient singularities by these groups,
the notation $\Gamma = \langle b; n_1, q_1; n_2, q_2; n_3, q_3 \rangle $ 
describes the tree of smooth rational curves with exactly one fork with three branches, 
whose fork has self-intersection $-b$ and the three branches have the same graphs as 
$\langle n_i,q_i \rangle \ (i=1,2,3)$. See also Table \ref{CLTS3}.
\erase{Note that the former entry (resp. latter two entries) in this table come 
exactly from the first (resp. second) entry in \cite[Satz 2.11]{brieskorn}.}

In each case, by using the recursive relation of continued fractions
\[ \frac{n}{n-3}= 2-\frac{1}{\frac{n-3}{n-6}},\]
we can describe $\Gamma$ more explicitly. Given $\Gamma$, we can compute 
the discrepancies of exceptional curves easily.
These are the contents of the next table, where $l$ denotes the number of 
vertices of $\Gamma$. 
\erase{
Let $(p\in S)$ be a two-dimensional log terminal singularity 
and $f:\widetilde{S}\rightarrow S$ be its minimal resolution.
We have the canonical bundle formula 
\[K_{\widetilde{S}}\equiv f^{\ast }K_{S}+\sum a_{i}F_{i},\]
where we have that the discrepancies $a_{i}>-1$ and $F_i$ are 
non-singular rational curves with the 
self-intersection number $(F_{i}^{2})\leq -2$.
\begin{lem}\label{dualg}
The dual graph of the exceptional curves $\{F_i\}$ is one of the following.
\begin{enumerate}
\item
$\xygraph{
    \bullet ([]!{+(0,-.3)} {a_1}) - [r]
    \bullet ([]!{+(0,-.3)} {a_2}) - [r] \cdots - [r]
    \bullet ([]!{+(0,-.3)} {a_{n-1}}) - [r]
    \bullet ([]!{+(0,-.3)} {a_n})}$

\item
$\xygraph{
    \bullet ([]!{+(0,-.3)} {a_1}) - [r]
    \bullet ([]!{+(0,-.3)} {a_2}) - [r] \cdots - [r]
    \bullet ([]!{+(0,-.3)} {a_i}) (
        - []!{+(1,.5)} \bullet ([]!{+(0,-.3)} {a_{i+1}})
        - []!{+(1,.0)}\cdots
        - []!{+(1,.0)} \bullet ([]!{+(0,-.3)} {a_{i+j}}),
        - []!{+(1,-.5)} \bullet ([]!{+(0,-.3)} {a_{i+j+1}})
        - []!{+(1,.0)}\cdots
        - []!{+(1,.0)} \bullet ([]!{+(0,-.3)} {a_{n}})
)}$
\end{enumerate}
(The use of $i$, $j$ in this picture is temporary.)
\end{lem}
\begin{proof}
See \cite[Theorem 4.7]{km}.
\end{proof}

Let us assume that $(p\in S)$ has index three. It follows that 
the discrepancies $a_i$ are either $-1/3$ or $-2/3$. 
\begin{lem}\label{-1/3}
%We assume that the number of connected components of 
%exceptional curves is greater than 2.
If $a_{k}=-1/3$ for some $k$ then $F_{k}$ is at an end of the dual graph of 
exceptional curves.
Moreover in this case the discrepancy of the next vertex is $-2/3$.
\end{lem}

\begin{proof}
Let $G_1, \cdots, G_{r}$ ($r\leq3$ by Lemma \ref{dualg})
be exceptional curves which intersect  $F_{k}$.
By the genus formula, 
\[-2-(F_{k}^{2})=(F_k,K_{\widetilde{S}})=
-\frac{1}{3}(F_{k}^{2})+\sum_{i=1}^{r} b_{i}(G_{i},F_{k}),\]
where $b_i$ is the discrepancy of $G_i$. 
Since $(G_{i},F_{k})=1$ 
%(Lemma \ref{dualg}) 
and $(F_{k}^{2})\leq -2$, we have 
$-4 \geq 2F_{k}^{2}=-6-\sum (3b_{i})$.
In particular
$\sum (3b_{i})$ is an even integer and $r\leq2$, since $3b_i\in \{-1,-2\}$.

If $r=1$ then $F_{k}$ is at an end and the discrepancy of $G_1$ is $b_1=-2/3$.

If $r=2$ then we have $b_{1}=b_{2}=-1/3$.
By repeating the same argument for $G_{1}$ and $G_{2}$, 
we obtain infinitely many exceptional curves. 
This is a contradiction.
\end{proof}

\begin{lem}
We consider the the dual graph of Lemma \ref{dualg} (2).
%Let the dual graph be the latter case of Lemma \ref{dualg} and
Let $F_k$ be the branching vertex; then necessarily $a_k=-2/3$ by the previous lemma.
Let $G_1,\cdots, G_3$ be the three vertices connected to $F_k$ and 
$b_i$ the discrepancies of $G_i$. 
Then the case $(b_i,b_j)=(-2/3,-2/3)$ does not occur for any choice of distinct $i,j\in 
\{1,2,3\}$.
\[
\xygraph{
     \cdots - [r]
    \bullet ([]!{+(0,-.3)} {-\frac{2}{3}}) - [r] 
    \bullet ([]!{+(0,-.3)} {-\frac{2}{3}}) (
        - []!{+(1,.5)} \bullet ([]!{+(0,-.3)} {-\frac{2}{3}})
        - []!{+(1,.0)}\cdots,
        - []!{+(1,-.5)} \bullet ([]!{+(0,-.3)} {b_{3}} )
        - []!{+(1,.0)}\cdots
)}
\]
\end{lem}

\begin{proof}
Without loss of generality, we can assume $i=1, j=2$ as in the figure. 
As in the previous lemma, we have 
$-2-(F_{k}^{2})=-(2/3)(F_{k}^{2})+\sum_{r=1}^{3} b_{r}$
and $-2\geq (F_{k}^{2})=-6-3\sum b_{r}=-2-3b_3$.
This is a contradiction.
\end{proof}

By these lemmas, the dual graph of the exceptional curves is 
described by a Dynkin diagram of type $A$ or $D$.
The singularity is uniquely determined by the number of exceptional curves 
and the discrepancies at ends.
We remark that in case of type $D_{n}$, two of the three ends have 
the discrepancies $-1/3$. 
}
\begin{longtable}{|c|c|c|}
\hline symbol & $\Gamma$ & $G$ \\
\hline
$A_{1}$(1) & \xygraph{*+[o][F-]{-3}([]!{+(0,-.5)} {-\frac{1}{3}})} & $C_{3,1}$\\
$A_{1}$(2) & \xygraph{*+[o][F-]{-6}([]!{+(0,-.5)} {-\frac{2}{3}})} & $C_{6,1}$\\
\hline
$A_{2}$(1,2) & 
\xygraph{
*+[o][F-]{-2}([]!{+(0,-.5)} {-\frac{1}{3}}) - [r]
*+[o][F-]{-5}([]!{+(0,-.5)} {-\frac{2}{3}})
} & $C_{9,5}$
\\
$A_{2}$(2,2) & 
\xygraph{
*+[o][F-]{-4}([]!{+(0,-.5)} {-\frac{2}{3}}) - [r]
*+[o][F-]{-4}([]!{+(0,-.5)} {-\frac{2}{3}})
} & $C_{15,4}$
\\
\hline
$A_{3}$(1,1)  & 
\xygraph{
*+[o][F-]{-2}([]!{+(0,-.5)} {-\frac{1}{3}}) - [r]
*+[o][F-]{-4}([]!{+(0,-.5)} {-\frac{2}{3}}) - [r]
*+[o][F-]{-2}([]!{+(0,-.5)} {-\frac{1}{3}})
} & $C_{12,7}$
\\
$A_{3}$(1,2)  & 
\xygraph{
*+[o][F-]{-2}([]!{+(0,-.5)} {-\frac{1}{3}}) - [r]
*+[o][F-]{-3}([]!{+(0,-.5)} {-\frac{2}{3}}) - [r]
*+[o][F-]{-4}([]!{+(0,-.5)} {-\frac{2}{3}})
} & $C_{18,11}$
\\
$A_{3}$(2,2)  & 
\xygraph{
*+[o][F-]{-4}([]!{+(0,-.5)} {-\frac{2}{3}}) - [r]
*+[o][F-]{-2}([]!{+(0,-.5)} {-\frac{2}{3}}) - [r]
*+[o][F-]{-4}([]!{+(0,-.5)} {-\frac{2}{3}})
} & $C_{24,7}$
\\
\hline
$A_{l}$(1,1)  & 
\xygraph{
*+[o][F-]{-2}([]!{+(0,-.5)} {-\frac{1}{3}}) - [r]
*+[o][F-]{-3}([]!{+(0,-.5)} {-\frac{2}{3}}) - [r]
*+[o][F-]{-2}([]!{+(0,-.5)} {-\frac{2}{3}}) - [r] \cdots - [r]
*+[o][F-]{-2}([]!{+(0,-.5)} {-\frac{2}{3}}) - [r] 
*+[o][F-]{-3}([]!{+(0,-.5)} {-\frac{2}{3}}) - [r]
*+[o][F-]{-2}([]!{+(0,-.5)} {-\frac{1}{3}})
} & $C_{9l-15,6l-11}$
\\
$A_{l}$(1,2)  & 
\xygraph{
*+[o][F-]{-2}([]!{+(0,-.5)} {-\frac{1}{3}}) - [r]
*+[o][F-]{-3}([]!{+(0,-.5)} {-\frac{2}{3}}) - [r]
*+[o][F-]{-2}([]!{+(0,-.5)} {-\frac{2}{3}}) - [r] \cdots - [r]
*+[o][F-]{-2}([]!{+(0,-.5)} {-\frac{2}{3}}) - [r]
*+[o][F-]{-4}([]!{+(0,-.5)} {-\frac{2}{3}})
} & $C_{9l-9,6l-7}$
\\
$A_{l}$(2,2)  & 
\xygraph{
*+[o][F-]{-4}([]!{+(0,-.5)} {-\frac{2}{3}}) - [r]
*+[o][F-]{-2}([]!{+(0,-.5)} {-\frac{2}{3}}) - [r] \cdots - [r]
*+[o][F-]{-2}([]!{+(0,-.5)} {-\frac{2}{3}}) - [r]
*+[o][F-]{-4}([]!{+(0,-.5)} {-\frac{2}{3}})
} & $C_{9l-3,3l-2}$
\\
\hline
$D_{4}$(1)  & 
\xygraph{
*+[o][F-]{-2}([]!{+(0,-.5)} {-\frac{1}{3}}) - [r]
*+[o][F-]{-3}([]!{+(0,-.5)} {-\frac{2}{3}}) (- []!{+(1,.5)} 
*+[o][F-]{-2}([]!{+(0,-.5)} {-\frac{1}{3}}),- []!{+(1,-.5)} 
*+[o][F-]{-2}([]!{+(0,-.5)} {-\frac{1}{3}})
} & $G_0$
\\
$D_{4}$(2)  & 
\xygraph{
*+[o][F-]{-4}([]!{+(0,-.5)} {-\frac{2}{3}}) - [r]
*+[o][F-]{-2}([]!{+(0,-.5)} {-\frac{2}{3}}) (- []!{+(1,.5)} 
*+[o][F-]{-2}([]!{+(0,-.5)} {-\frac{1}{3}}),- []!{+(1,-.5)} 
*+[o][F-]{-2}([]!{+(0,-.5)} {-\frac{1}{3}})
} & $G_4$
\\
\hline
$D_{l}$(1)  & 
\xygraph{
*+[o][F-]{-2}([]!{+(0,-.5)} {-\frac{1}{3}}) - [r]
*+[o][F-]{-3}([]!{+(0,-.5)} {-\frac{2}{3}}) - [r] 
*+[o][F-]{-2}([]!{+(0,-.5)} {-\frac{2}{3}}) - [r] \cdots - [r]
*+[o][F-]{-2}([]!{+(0,-.5)} {-\frac{2}{3}}) (- []!{+(1,.5)} 
*+[o][F-]{-2}([]!{+(0,-.5)} {-\frac{1}{3}}),- []!{+(1,-.5)} 
*+[o][F-]{-2}([]!{+(0,-.5)} {-\frac{1}{3}})
} & $G_{3l-10}$
\\
$D_{l}$(2)  & 
\xygraph{
*+[o][F-]{-4}([]!{+(0,-.5)} {-\frac{2}{3}}) - [r]
*+[o][F-]{-2}([]!{+(0,-.5)} {-\frac{2}{3}}) - [r] \cdots - [r]
*+[o][F-]{-2}([]!{+(0,-.5)} {-\frac{2}{3}}) (- []!{+(1,.5)} 
*+[o][F-]{-2}([]!{+(0,-.5)} {-\frac{1}{3}}),- []!{+(1,-.5)} 
*+[o][F-]{-2}([]!{+(0,-.5)} {-\frac{1}{3}})
} & $G_{3l-8}$
\\
\hline
\caption[]{log terminal singularities of index three}\label{CLTS3}
\end{longtable}

One observation is the following: within the same type of Dynkin diagram, 
the discrepancy can vary only at the end of the diagram.
This fact in mind, we introduce the following symbols for singularities.
We denote the singularity by $A_{n}(\alpha , \beta )$ (resp. $D_{n}(\alpha )$) 
if the dual graph of the exceptional curves is of type $A_n$ (resp. $D_n$) 
and the discrepancies at the ends are 
$-\alpha /3$ and $-\beta /3$ (resp. the discrepancy at the end of the 
longest branch is $-\alpha /3$). The meaning of $A_1(\alpha )$ will be obvious.

The importance of discrepancies in defining right resolutions 
will show the advantage of these new symbols. See Section \ref{gyaku}.
In general they are adequate in numerical computations of singularities.
(Moreover, we could use the viewpoint of discrepancies to give a self-contained 
classification argument of log terminal singularities of index three.)

\erase{
\begin{rem}\label{lt-quot}
Using the basic algorithm of computing the exceptional cycles of cyclic 
quotient singularities \cite[III, Section 5]{BHPV}, we can easily deduce the 
following equalities of singularities.
\begin{equation*}
\begin{split}
A_1(1) &= \frac{1}{3}(1,1), \qquad A_1(2) = \frac{1}{6}(1,1),\\
A_2(12) &=  \frac{1}{9}(1,2), \qquad A_2(22) =  \frac{1}{15}(1,4),\\
A_3(11) &=  \frac{1}{12}(1,7), \quad  A_3(12) =  \frac{1}{18}(1,5),\\
A_3(22) &=  \frac{1}{24}(1,7), \quad  \text{ and so on.}
\end{split}
\end{equation*}
\end{rem}
}

\section{Non-symplectic automorphisms of order three on $K3$ surfaces}\label{ord3}

Let $X$ be a $K3$ surface and $\varphi$ a non-symplectic automorphism of order three
on $X$. 
In this section we extend some of the results of \cite{AS} and \cite{Taki} 
so that we will describe 
{\em{arbitrary}} non-symplectic automorphisms of order three {\em{of elliptic type}}.
Here we follow \cite{AN} to say that $\varphi$ is {\em{of 
elliptic type}} if the fixed locus $X^{\varphi}$ contains a curve $C^{(g)}$ of 
genus $g\geq 2$.
Although our main interest is in the case $\varphi$ is of elliptic type, 
let us begin with generalities. 

%Let $\varphi$ be an arbitrary non-symplectic automorphism
%of order three. 
%Since $\varphi$ is non-symplectic, 
Let $\varphi$ be a non-symplectic automorphism of order three. 
By \cite{Nikulin}, it has no eigenvalue $1$ in the action on the transcendental lattice $T_X$. 
Therefore, the fixed sublattice $H^2 (X, \mathbb{Z})^{\varphi}$ sits inside the 
N\'{e}ron-Severi lattice $S_X$.
We denote it by $S_X^{\varphi}$. 
This lattice is easily seen to be $3$-elementary and hyperbolic, hence 
$S_X^{\varphi}$ is 
one of the $24$ lattices of \cite[Lemma 2.3]{Taki}. 
In particular $S_X^{\varphi}$ represents zero in every case.
The next lemma is crucial.

\begin{lem}\label{nef}
Let $X$ be a projective $K3$ surface, $\varphi$ be an automorphism of $X$ of order $n\geq 2$.
Assume $S_X^{\varphi}$ represents zero, i.e., there exists a divisor $D$ such that
\[ 0\not\sim D \in S_X^{\varphi},\quad (D^2)=0.\]
Then we can find another $D'$ such that 
\[ 0\not\sim D' \in S_X^{\varphi}, ((D')^2)=0 \text{ and $D'$ is nef}.\]
\end{lem}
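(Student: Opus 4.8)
The plan is to move $D$ into the nef cone by the Weyl group $W=W(S_X)$ generated by the reflections $s_\delta$ in all $(-2)$-classes $\delta\in S_X$, and then to argue that the resulting nef class is forced to be $\varphi$-invariant. First I would normalize the starting vector. Since $\varphi$ is an automorphism it preserves ampleness, hence fixes the positive cone $\overline{C^+}$ (the component of $\{x^2>0\}$ containing the ample classes). As $D\neq 0$ and $(D^2)=0$, the class $D$ lies on one nappe of the light cone; replacing $D$ by $-D$ if necessary (which stays in $S_X^{\varphi}$ and still has square $0$) I may assume $D\in\overline{C^+}$. By Riemann--Roch, $\chi(\mathcal{O}_X(D))=2+\tfrac12(D^2)=2>0$, and since $(D,H)>0$ for an ample $H$ (a nonzero class in $\overline{C^+}$ pairs strictly positively with $H$, by the Hodge index theorem) the class $-D$ is not effective; hence $D$ is effective.

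Next I would establish the existence of a nef representative in the $W$-orbit of $D$ by a descent argument. If $D$ is not nef there is a $(-2)$-curve $C$ with $(D,C)<0$, and then $s_C(D)=D+(D,C)C$ is again an effective isotropic class in $\overline{C^+}$ (reflections preserve the form and the component $C^+$, since $s_\delta(H)$ has positive square and positive pairing with $H$), with strictly smaller degree $(s_C(D),H)<(D,H)$ because $(C,H)>0$. The degrees $(\,\cdot\,,H)$ are positive integers, so after finitely many such reflections the process terminates at a class $D_0=w_0(D)$, $w_0\in W$, meeting every $(-2)$-curve non-negatively; since $D_0\in\overline{C^+}$ this means $D_0$ is nef, isotropic and nonzero.

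Finally I would prove invariance, which is where the key input enters: it is a standard fact that the nef cone is a \emph{strict} fundamental domain for the action of $W$ on $\overline{C^+}$, i.e.\ every $W$-orbit meets it in exactly one point. Because $\varphi$ preserves the intersection form it permutes the $(-2)$-classes, so conjugation normalizes $W$; combined with $\varphi(D)=D$ this gives $\varphi(w(D))=(\varphi w\varphi^{-1})(D)$ with $\varphi w\varphi^{-1}\in W$, so $\varphi$ maps the $W$-orbit of $D$ to itself. Since $\varphi$ also preserves the nef cone, it sends the unique nef element $D_0$ of the orbit to another nef element of the orbit, which by uniqueness must be $D_0$ itself. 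Thus $\varphi(D_0)=D_0$, i.e.\ $D_0\in S_X^{\varphi}$, and $D':=D_0$ is the desired nonzero nef isotropic class.

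The main obstacle is precisely this last step: a single Weyl-group translate making $D$ nef is not a priori $\varphi$-invariant, and reflecting in individual $(-2)$-curves destroys membership in $S_X^{\varphi}$, since those curves need not be $\varphi$-fixed. The uniqueness of the nef representative is what converts the non-canonical choice $w_0$ into a canonical, hence $\varphi$-stable, output. I would take care to justify this uniqueness for the boundary (isotropic) ray of $D$, checking that the rational isotropic lattice class $D$ lies in the part of the cone where the fundamental-domain property applies; the descent argument of the second step already certifies that the ray of $D$ is $W$-equivalent to a nef ray, which is exactly what is needed.
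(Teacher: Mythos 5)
Your proposal is correct, but it takes a genuinely different route from the paper. The paper makes the descent itself $\varphi$-equivariant: it writes the effective class via Zariski decomposition $D=P+N$ (so that uniqueness forces $\varphi(N)=N$), picks a $(-2)$-curve $l$ in the support of $N$ with $(D,l)<0$, classifies the configuration of the $\varphi$-orbit of $l$ using $0>(E^2)=m(k-2)$ (a single curve, a disjoint union of $(-2)$-curves, or a disjoint union of $A_2$-pairs), and in each case applies the composite of Picard--Lefschetz reflections attached to the whole orbit (in the $A_2$ case, reflections in the $(-2)$-classes $a_i+b_i$); this composite preserves $S_X^{\varphi}$, so every intermediate class remains $\varphi$-invariant and the induction on $(H,D)$ closes. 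You instead run the ordinary, non-equivariant descent one $(-2)$-curve at a time and recover invariance only at the end, from the statement that the nef cone is a \emph{strict} fundamental domain for the Weyl group $W$ on the positive cone, combined with the facts that $\varphi$ normalizes $W$ and fixes the class $D$, hence permutes the orbit $WD$ and must fix its unique nef point. Your flagged boundary issue is real but is resolvable exactly as you suggest: the uniqueness half of the fundamental-domain property (two nef classes in one $W$-orbit coincide) follows for the \emph{closed} chamber, isotropic classes included, from the standard length-induction argument of Coxeter--Vinberg theory, and existence for your particular orbit is certified by the descent itself. The trade-off: the paper's argument is elementary and self-contained (Riemann--Roch, Zariski decomposition, reflections), and its explicit orbit-wise reflections match the lattice-theoretic computations used later in the paper; your argument is shorter modulo the quoted fundamental-domain fact, avoids both the Zariski decomposition and the orbit-configuration case analysis, and generalizes immediately to an arbitrary group of automorphisms rather than a single automorphism of finite order.
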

\begin{proof}
First by the Riemann-Roch theorem we can assume that $D$ is effective.
We fix an ample divisor $H$ on $X$ and use the descending induction on the value 
$(H, D)$, noting that it is a positive integer. In this setting our 
lemma is reduced to the following claim.
\begin{equation*}
\begin{split}
\ & \textit{If $D$ is not nef, then we can find $D'$ such that } \\
\ & \textit{$0\not\sim D' \in S_X^{\varphi}$, $((D')^2)=0$, $D'$ is effective and $(H, D) > (H, D')$.}
\end{split}
\end{equation*}
\erase{
We use the following lemma. 
\begin{lem}\label{cs}\cite[IV, (7.2)]{BHPV}
Let $E$ and $F$ be effective nonzero divisors on a smooth projective surface 
with $(E^2)\geq 0, (F^2)\geq 0$. Then we have $(E,F)\geq 0$ and the equality holds 
only if the numerical classes of $E$ and $F$ are proportional. 
\end{lem}
}
Since $D$ is effective, we can make use of Zariski decomposition $D=P+N$.
Here $P$ is a nef $\mathbb{Q}$-divisor, $N$ is an effective $\mathbb{Q}$-divisor
whose prime components $\{ N_i \}$ of $N$ have negative-definite intersection matrix and 
$(P,N)=0$. By uniqueness, the negative part $N$ is determined by the numerical class of $D$, hence
$\varphi (D)\sim D$ implies $\varphi (N)=N$. 
We also note that every prime component of $N$ is a $(-2)$-curve on $X$.

Suppose $D$ is not nef. Then for  
some $(-2)$-curve $l\in \{ N_i \}$ we have $(D,l)<0$.
Let $m$ be the least positive integer such that $\varphi^m (l)=l$ and let $E= l+ \cdots
+ \varphi^{m-1} (l)$. The negativity of $N$ implies $(E^2)<0$.
From this inequality, we can classify the 
possible configuration of the divisor $E$ as follows. \\
{\bf{(I)}} When $m=1$ then $E=l$. 

If $m\geq 2$ we put $k:= (l, \varphi (l)+ \cdots + \varphi^{m-1} (l)).$ Then since
\[0>(E^2)= \sum_{i=0}^{m-1} 
\bigl( (\varphi^i (l), E-\varphi^i (l))+(\varphi^i (l), \varphi^i (l)) \bigr) =
m(k-2),\]
we obtain $k=0$ or $k=1$. \\
{\bf{(II)}} If $k=0$ then $E$ is a disjoint union of $m$ $(-2)$-curves. \\
{\bf{(III)}} If $k=1$ then the dual graph of $E$ is a disjoint union of the 
Dynkin diagram of type $A_2$. Consequently $m$ is even.\\

For a $(-2)$-element $f$ in $S_X$ we denote by $s_f$ the Picard-Lefschetz reflection
\[s_f \colon x\mapsto x+(x, f) f\]
which is an isometry of $S_X$ that preserves the positive cone.  
In case (I) we put $D'\sim s_l (D)=D+(D,l)l.$ Then $D'$ is an effective divisor (up to linear 
equivalence)
whose degree as to $H$ is less than that of $D$. It is easy to see that $D'\in S_X^{\varphi}$ 
by using $\varphi (l)=l$. Thus the claim holds. Similarly in case (II) we put 
\begin{equation*}
\begin{split}
D' &\sim s_l s_{\varphi (l)} \cdots s_{\varphi^{m-1} (l)} (D)\\
	&= D+(D,l) (l+\varphi (l)+\cdots + \varphi^{m-1} (l)).
\end{split}
\end{equation*}
In case (III) we relabel $\{ l, \cdots, \varphi^{m-1} (l)\}= \{a_1, b_1, \cdots, a_{m/2}, b_{m/2}\}$ 
so that $(a_i, b_i)=1$ holds for all $i$. Then we put 
\begin{equation*}
\begin{split}
D' &\sim s_{a_1+b_1} \cdots s_{a_{m/2}+b_{m/2}} (D)\\
	&= D+2(D,l) (a_1+b_1+\cdots + a_{m/2}+b_{m/2}).
\end{split}
\end{equation*}
In this way we obtain the claim and hence Lemma \ref{nef}.
\end{proof}
%\begin{rem}
%The lemma and the proof is true for $K3$ surfaces in any characteristic.
%\end{rem}
\begin{cor}\label{enef}
If $\varphi$ is a non-symplectic automorphism of order three of a $K3$ surface $X$, 
then there exists a nef divisor $D\not\sim 0$ such that $(D^2)=0$ and $\varphi (D)\sim D.$ 
In particular, 
there is a $\varphi$-stable elliptic pencil $f\colon X\rightarrow \mathbb{P}^1$.  
\end{cor}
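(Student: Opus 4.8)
The plan is to read off the existence of a nef isotropic class directly from Lemma \ref{nef} and then invoke the standard dictionary between such classes and elliptic pencils on a $K3$ surface; the only input beyond Lemma \ref{nef} is the verification that the resulting pencil is primitive and $\varphi$-stable.

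First I would check that the hypothesis of Lemma \ref{nef} is satisfied. As recalled just before that lemma, for a non-symplectic $\varphi$ of order three the fixed lattice $S_X^{\varphi}=H^2(X,\mathbb{Z})^{\varphi}$ is $3$-elementary and hyperbolic, hence appears in the list of \cite[Lemma 2.3]{Taki}, and every lattice on that list represents zero. Thus there is a nonzero $D\in S_X^{\varphi}$ with $(D^2)=0$, and Lemma \ref{nef} (applied with $n=3$) replaces it by a nef divisor, which I continue to call $D$. Since $D\in S_X^{\varphi}$, we have $\varphi(D)=D$ in $S_X$, in particular $\varphi(D)\sim D$, so $D$ already realizes all the numerical properties asserted in the Corollary.

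Next I would produce the elliptic pencil. Write $D=mE$ with $E$ primitive in $S_X$; then $E$ is again nef with $(E^2)=0$, and from $m\,\varphi(E)=\varphi(D)=D=mE$ together with the torsion-freeness of $S_X$ one gets $\varphi(E)=E$, so that $E\in S_X^{\varphi}$ as well. Now the classical theory of linear systems on $K3$ surfaces (see \cite{BHPV}; recall that over $\mathbb{C}$ there are no quasi-elliptic fibrations and no multiple fibers) shows that a nonzero primitive nef class $E$ with $(E^2)=0$ satisfies $\dim|E|=1$, that $|E|$ is base-point free, and that the induced morphism $f\colon X\rightarrow\mathbb{P}^1$ is an elliptic fibration with general fiber $F\sim E$. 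Finally $\varphi$-stability is immediate, since $\varphi(F)\sim\varphi(E)=E\sim F$, which is exactly the condition defining a $\varphi$-stable pencil in the conventions fixed above.

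The main obstacle is entirely concentrated in Lemma \ref{nef}, which does the real work of moving, inside $S_X^{\varphi}$, from an arbitrary isotropic class to a nef one by a $\varphi$-equivariant sequence of Picard--Lefschetz reflections. Granting that lemma, the Corollary is essentially formal; the only points requiring a little care are the reduction to a primitive class (so that $|E|$ is genuinely a pencil rather than a multiple of one) and the compatibility of primitivity with $\varphi$-invariance, both handled by the torsion-free argument above.
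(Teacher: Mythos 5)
Your proposal is correct and takes essentially the same route as the paper: the paper states Corollary \ref{enef} without a separate proof, treating it as an immediate consequence of Lemma \ref{nef} together with the observation (made just before that lemma) that $S_X^{\varphi}$ is $3$-elementary hyperbolic and hence represents zero. Your write-up merely supplies the standard omitted details --- passing to a primitive class $E$ with $D=mE$, noting $\varphi(E)=E$ by torsion-freeness of $S_X$, and invoking the classical fact that a primitive nef isotropic class on a $K3$ surface defines a base-point-free elliptic pencil --- all of which is sound.
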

\cite{AS, Taki} also use elliptic fibrations to describe generic $X$. To clarify 
the difference from them we note that 
under the genericity assumption $S_X^{\varphi}=S_X$, 
$\varphi$ preserves every $(-2)$-curve on $X$. 
But there are many examples where this is not the case.
\begin{ex}\label{exa1}
Let $E_0$ be the elliptic curve with period $\zeta_3=e^{2\pi i/3}$, the 
cubic root of unity, and let $\varphi$ be the 
automorphism of $E_0$ of order three given by multiplication of $\zeta_3$.
Let $E$ be another elliptic curve not
isogenous to $E_0$. We put $X= Km (E_0\times E)$, namely the minimal desingularization of the 
quotient surface $E_0\times E/ (-1)$. From the theory of Kummer surfaces \cite{BHPV}, 
we see that $X$ is a $K3$ surface of Picard number $\rho =18$; its  
transcendental lattice is isomorphic to $U(2)\oplus U(2)$. 

The automorphism $(\varphi ,1)$ of $E_0\times E$ clearly commutes with $(-1)$, hence it 
descends to an automorphism of $E_0\times E/ (-1)$.
Its fixed loci consist of
the rational curve $\{0_{E_0}\}\times (E/(-1))$ and
the elliptic curve $\{\alpha \}\times E$, where we put 
$E_0^{\varphi}=\{0_{E_0}, \alpha, -\alpha\}$. See the picture below.
In the picture, thick lines are fixed loci, circles represent singularities of $E_0\times E/ (-1)$
and the $2$-torsion points of $E_0$ (resp. $E$) are $\{0_{E_0}, a_1, a_2, a_3\}$ 
(resp. $\{0_{E}, b_1, b_2, b_3\}$).
\begin{center}
\begin{picture}(120,100)
\put(0,20){\line(1,0){120}}
\put(0,40){\line(1,0){120}}
\put(0,60){\line(1,0){120}}
\put(0,80){\line(1,0){120}}
\put(40,0){\line(0,1){100}}
\put(60,0){\line(0,1){100}} 
\put(80,0){\line(0,1){100}}
\linethickness{1.6pt}
\put(20,0){\line(0,1){100}} 
\put(112,0){\line(0,1){100}}
%%%%%%%%
\linethickness{0.4pt}
\put(20,20){\circle{7}}
\put(20,40){\circle{7}}
\put(20,60){\circle{7}}
\put(20,80){\circle{7}}
\put(40,20){\circle{7}}
\put(40,40){\circle{7}}
\put(40,60){\circle{7}}
\put(40,80){\circle{7}}
\put(60,20){\circle{7}}
\put(60,40){\circle{7}}
\put(60,60){\circle{7}}
\put(60,80){\circle{7}}
\put(80,20){\circle{7}}
\put(80,40){\circle{7}}
\put(80,60){\circle{7}}
\put(80,80){\circle{7}}
%%%%%%%%%%%%
\put(15,-8){$0_{E_0}$}
\put(35,-8){$a_1$}
\put(55,-8){$a_2$}
\put(75,-8){$a_3$}
\put(107,-8){$\alpha$}
\put(-8,15){$0_E$}
\put(-8,35){$b_1$}
\put(-8,55){$b_2$}
\put(-8,75){$b_3$}
\end{picture}
\end{center}
\quad \newline

The automorphism induced on $X$ is denoted by the same $\varphi$. 
On $X$ the sixteen circles are replaced by $(-2)$-curves.
Since $\varphi$ permutes circles on lines $\{a_i\}\times (E/(-1))$ in the picture, 
we see that not all $(-2)$-curves are preserved.
It is easy to 
see (see below) that the fixed points of $\varphi$ on $X$ consist of 
\[\{4 \text{ points}\} \cup \{\text{a rational curve}\}\cup \{\text{an elliptic curve}\}.\]
By \cite[Table 2]{AS} we obtain $S_X^{\varphi} \simeq U\oplus A_2^{\oplus 4}$.
Thus in fact $S_X\neq S_X^{\varphi}$. Note that by \cite{oguiso} $X$ does not have any 
Jacobian elliptic fibration with the root lattice of reducible fibers 
isomorphic to $A_2^{\oplus 4}$.

The two projections of $E_0\times E$ induce two 
$\varphi$-stable elliptic fibrations on $X$
\[f\colon X\rightarrow E/(-1)\simeq \mathbb{P}^1, \ g\colon 
X\rightarrow E_0/(-1)\simeq \mathbb{P}^1.\]
(In the picture, the fibers of $f$ are horizontal and that 
of $g$ are vertical.) 
We see that the $j$-invariant of $g$
is nonzero constant, $\varphi$ acts on the base $E_0/(-1)$
non-trivially and $g$ has the singular fiber of type $I_0^*$-(i) in the notation
of Proposition \ref{fiberwise}. 
%Moreover the two coordinate projections of $E\times E_0$ induces two elliptic surface
%structures on $X$ which is compatible with $\varphi$. 
%\[\xymatrix{
% & X\ar[dl]^{p_1}\ar[dr]_{p_2} & \\
% E/(-1)\simeq \mathbb{P}^1 & & \mathbb{P}^1\simeq E_0/(-1)
%}\]
\end{ex}
\begin{ex}\label{exa2}
Let $E_0$ be as in the previous example.
Let us consider $X=Km(E_0\times E_0)$ and its automorphism induced from $(\varphi, \varphi )$.
In this case the picture becomes 
\begin{center}
\begin{picture}(120,120)
\linethickness{0.4pt}
\put(0,20){\line(1,0){120}}
\put(0,40){\line(1,0){120}}
\put(0,60){\line(1,0){120}}
\put(0,80){\line(1,0){120}}
\put(0,112){\dashbox(120,0){}}
\put(40,0){\line(0,1){120}}
\put(60,0){\line(0,1){120}} 
\put(80,0){\line(0,1){120}}
\put(20,0){\line(0,1){120}} 
\put(112,0){\dashbox(0,120){}}
%%%%%%%%
\put(20,20){\circle*{7}}
\put(20,20){\circle{10}}
\put(20,40){\circle{7}}
\put(20,60){\circle{7}}
\put(20,80){\circle{7}}
\put(40,20){\circle{7}}
\put(40,40){\circle{7}}
\put(40,60){\circle{7}}
\put(40,80){\circle{7}}
\put(60,20){\circle{7}}
\put(60,40){\circle{7}}
\put(60,60){\circle{7}}
\put(60,80){\circle{7}}
\put(80,20){\circle{7}}
\put(80,40){\circle{7}}
\put(80,60){\circle{7}}
\put(80,80){\circle{7}}
\put(112,20){\circle*{7}}
\put(112,112){\circle*{7}}
\put(20,112){\circle*{7}}
%%%%%%%%%%%%
\put(15,-8){$0$}
\put(35,-8){$a_1$}
\put(55,-8){$a_2$}
\put(75,-8){$a_3$}
\put(107,-8){$\alpha$}
\put(-8,15){$0$}
\put(-8,35){$a_1$}
\put(-8,55){$a_2$}
\put(-8,75){$a_3$}
\put(-8,107){$\alpha$}
\end{picture}
\end{center}
\quad \newline
Here the dashed lines are elliptic curves. 
The sixteen circles are singularities and four black circles are fixed points.
The leftmost bottom one has both properties and it gives a fixed $(-2)$-curve.
The rightmost upside black circle indicates two fixed points. 
Hence the fixed points on $X$ consist of 
\[\{4 \text{ points}\} \cup \{\text{a rational curve}\}\]
and $S_X^{\varphi}\simeq U(3)\oplus A_2^{\oplus 4}$, which is a proper sublattice 
of $S_X$ which has rank $20$.
The elliptic fibration induced from the projection has the singular fiber 
of type $I_0^*$-(ii) in the notation of Proposition \ref{fiberwise}.
It acts on the base non-trivially.
\end{ex}
 
In the rest of this section we restrict ourselves to the case of elliptic type and 
give a description of arbitrary $\varphi$.
%However in the following we show that if $\varphi$ is of elliptic type, then we 
%can describe them uniformly.
\begin{prop}\label{ellip1}
Assumptions and $f\colon X\rightarrow \mathbb{P}^1$ as in Corollary \ref{enef}.
Assume $\varphi$ is of elliptic type. 
Then the following holds.
\begin{enumerate}
\item The $j$-invariant of $f$ is identically zero. Hence every smooth fiber is 
isomorphic to the elliptic curve with the period $\zeta_3$. 
\item The automorphism $\varphi$ acts on the base trivially.
\item The map $X^{\varphi}\rightarrow \mathbb{P}^1$ is generically $3$ to $1$. 
\item Every fiber of $f$ is of type either $I_0, I_0^*, II, II^*, IV$ or $IV^*$. 
\end{enumerate}
\end{prop}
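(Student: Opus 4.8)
The plan is to read off all four assertions from the single observation that the genus $g\geq 2$ fixed curve $C^{(g)}\subset X^{\varphi}$ must be a multisection of $f$. First I would note that $C^{(g)}$ cannot lie in a fiber: every fiber of $f$ has arithmetic genus one and its irreducible components are smooth rational curves, so it contains no smooth curve of genus $\geq 2$. Hence $f|_{C^{(g)}}$ is nonconstant and therefore surjective. Writing $\bar\varphi$ for the automorphism of the base determined by the $\varphi$-stability of $f$ (so that $f\circ\varphi=\bar\varphi\circ f$), the fact that $\varphi$ fixes $C^{(g)}$ pointwise gives $\bar\varphi(f(c))=f(\varphi(c))=f(c)$ for every $c\in C^{(g)}$; since $f(C^{(g)})=\mathbb{P}^1$, this forces $\bar\varphi=\mathrm{id}$, which is statement (2). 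This step is where the elliptic type hypothesis is essential.

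With the base action trivial, $\varphi$ preserves each fiber. For general $p$ the fiber $F=f^{-1}(p)$ is a smooth elliptic curve and $\varphi|_F$ is an automorphism of order dividing three; it is nontrivial, for otherwise $X^{\varphi}$ would be two-dimensional. Because $C^{(g)}$ is a multisection it meets $F$, so $\varphi|_F$ has a fixed point, and moving that point to the origin of the group law makes $\varphi|_F$ a group automorphism of order three. The only complex elliptic curve admitting such an automorphism is the one of period $\zeta_3$, so $j(F)=0$; since the $j$-invariant is a morphism $\mathbb{P}^1\to\mathbb{P}^1$ that agrees with $0$ on a dense set, it is identically zero, which is (1). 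Moreover such an automorphism has exactly $\deg(\mathrm{id}-\varphi|_F)=|1-\zeta_3|^2=3$ fixed points, i.e. $\#(X^{\varphi}\cap F)=3$. As the isolated fixed points of $\varphi$ lie over only finitely many base points, the generic fiber of $f|_{X^{\varphi}}$ consists of exactly these three points, which is (3).

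For (4) I would combine $j\equiv 0$ with the Kodaira classification of singular fibers. Since an elliptic $K3$ surface carries no multiple fibers, $f$ is relatively minimal with fibers on Kodaira's list, and the type at $p$ is constrained by the local behavior of $j$. The multiplicative types $I_n$ and $I_n^*$ with $n\geq 1$ require a pole of $j$, which is absent, and the types $III$ and $III^*$ force $j=1728\neq 0$; eliminating these leaves precisely $I_0, I_0^*, II, II^*, IV$ and $IV^*$, as claimed. I expect the only real care here to be the bookkeeping of which $j$-value attaches to each Kodaira type (through the orders of vanishing of $c_4$ and $\Delta$) together with the remark that this argument needs no section of $f$; the genuinely geometric content is all contained in the first two paragraphs.
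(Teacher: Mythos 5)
Your proposal is correct and follows essentially the same route as the paper's proof: the fixed curve $C^{(g)}$ cannot lie in a fiber, hence is a multisection whose pointwise fixedness forces the base action to be trivial, then each smooth fiber carries an order-three automorphism with a fixed point (giving $j\equiv 0$ and exactly three fixed points per general fiber), and finally Kodaira's classification with $j\equiv 0$ yields (4). You have simply filled in the details the paper leaves implicit (the fixed-point count $|1-\zeta_3|^2=3$, the elimination of types $I_n$, $I_n^*$, $III$, $III^*$), which is fine.
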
 
\begin{proof}
Since the fixed curve $C^{(g)}$ cannot be located inside a fiber, it intersects with every fiber. 
Hence (2) follows. Also every smooth fiber has an 
automorphism of order three with fixed points, hence (1) and (3).  
%(4) is also clear from the classification.
By the classification of singular fibers \cite[p.210]{BHPV},
if the $j$-invariant of $f$ is identically zero then 
singular fibers of $f$ are of type either $I_0, I_0^*, II, II^*, IV$ or $IV^*$.
Thus (4) follows.
\end{proof}
Thus $\varphi$ acts on each fiber $F$, $\varphi (F)=F$. 
In the next subsection we study the fixed points of this fiberwise action.
 
\subsection{Fixed points on Kodaira fibers}

We keep the elliptic fibration $f$ of Proposition \ref{ellip1}.
We can normalize the action of $\varphi$ on $H^{2,0} (X)$ as multiplication by $\zeta_3$,
the primitive cubic root of unity,
without loss of generality.
Then recall \cite{Nikulin, Taki} that 
the local action of $\varphi$ on $X$ around the fixed point is either 
\begin{equation}\label{local}
A= \begin{pmatrix} \zeta_3^2  & 0 \\ 0  & \zeta_3^2 \end{pmatrix}
\text{ (isolated) or }
B= \begin{pmatrix} 1 & 0 \\ 0 & \zeta_3 \end{pmatrix}
\text{ (fixed curve).}
\end{equation}
In the following we focus on the relationship between action {\em{on $X$}} and {\em{on $F$}}. 
We say that a fixed point $P$ of the action of $\varphi$ on $F$ is 
\begin{itemize}
\item isolated, if the local action on $X$ is given by the matrix $A$,
\item on a fixed curve, if the local action is given by $B$ and the fixed curve is inside $F$,
\item intermediate, if the local action is given by $B$ and the fixed curve is outside $F$ and 
intersects $F$ at $P$. 
\end{itemize}
The point is that we can distinguish these three types by considering only the action on $F$.
We are going to prove the following.
\begin{prop}\label{fiberwise}
Let $F$ be a singular fiber of $f$, see Proposition \ref{ellip1}.
Then the action of $\varphi$ on $F$ and its fixed points are as in one of the following pictures.
Here thick line represents a fixed curve, $\circ$ an isolated fixed point and 
$\bullet$ an intermediate fixed point on $F$. 
\begin{center}
\begin{picture}(330,140)
\put(5,130){Type $I^*_0$}
%%%%%%
\put(5,110){(i)}
\linethickness{1.6pt}
\put(35,10){\line(0,1){100}}
\linethickness{0.4pt}
\put(15,30){\line(1,0){80}}
\put(15,50){\line(1,0){80}}
\put(15,70){\line(1,0){80}}
\put(15,90){\line(1,0){80}}
\put(75,30){\circle{7}}
\put(75,50){\circle{7}}
\put(75,70){\circle{7}}
\put(75,90){\circle{7}}
%%%%%%
\put(115,110){(ii)}
\put(145,10){\line(0,1){100}}
\linethickness{1.6pt}
\put(125,30){\line(1,0){80}}
\linethickness{0.4pt}
\put(125,50){\line(1,0){80}}
\put(125,70){\line(1,0){80}}
\put(125,90){\line(1,0){80}}
\put(145,100){\circle{7}}
\put(205,90){$E_1$}
\put(205,70){$E_2$}
\put(205,50){$E_3$}
%%%%%%%
\put(225,110){(iii)}
\put(255,10){\line(0,1){100}}
\put(235,30){\line(1,0){80}}
\put(235,50){\line(1,0){80}}
\put(235,70){\line(1,0){80}}
\put(235,90){\line(1,0){80}}
\put(255,30){\circle{7}}
\put(255,100){\circle*{7}}
\put(295,30){\circle*{7}}
\put(315,90){$E_1$}
\put(315,70){$E_2$}
\put(315,50){$E_3$}
%%%%%%%%%%%
\put(150,0){(ii), (iii): $\varphi$ permutes $E_i$.}
\end{picture}
\end{center}
\quad \\

\begin{center}
\begin{picture}(330,100)
\put(5,100){Type $II$}
%%%%%%
%To go back to original picture, do x-coordinate -60
%\put(45,80){(i)}
\qbezier(160,80)(190,80)(190,50)
\qbezier(190,50)(190,20)(160,20)
\cbezier(160,20)(120,20)(160,50)(120,50)
\cbezier(120,50)(160,50)(120,80)(160,80)
\put(120,50){\circle*{7}}
\put(190,50){\circle*{7}}
%%%%%%
%\put(185,80){(ii)}
%\qbezier(240,80)(270,80)(270,50)
%\qbezier(270,50)(270,20)(240,20)
%\cbezier(240,20)(200,20)(240,50)(200,50)
%cbezier(200,50)(240,50)(200,80)(240,80)
%\put(200,50){\circle*{7}}
%\put(270,50){\circle{7}}
\end{picture}
\end{center}
\begin{center}
\begin{picture}(330,110)
\put(5,100){Type $II^*$}
\put(80,60){\line(1,0){50}}
\put(120,50){\line(0,1){40}}
\linethickness{1.6pt}
\put(110,80){\line(1,0){100}}
\linethickness{0.4pt}
\put(160,50){\line(0,1){40}}
\put(200,50){\line(0,1){40}}
\put(190,60){\line(1,0){50}}
\linethickness{1.6pt}
\put(230,70){\line(0,-1){40}}
\linethickness{0.4pt}
\put(220,40){\line(1,0){50}}
\put(260,50){\line(0,-1){40}}
%%%%%%%%%%%
\put(90,60){\circle*{7}}
\put(120,60){\circle{7}}
\put(160,60){\circle{7}}
\put(200,60){\circle{7}}
\put(260,40){\circle{7}}
\put(260,20){\circle*{7}}
\end{picture}
\end{center}
\begin{center}
\begin{picture}(330,110)
\put(5,100){Type $IV$}
%%%%%%
\put(60,85){(i)}
\put(68,50){\line(1,0){72}}
\put(86,81){\line(18,-31){36}}
\put(86,19){\line(18,31){36}}
\put(104,50){\circle*{7}}
\put(125,81){$E_1$}
\put(143,50){$E_2$}
\put(125,23){$E_3$}
\put(70,8){$\varphi$ permutes $E_i$.}
%%%%%%
\put(182,85){(ii)}
\put(190,50){\line(1,0){72}}
\put(208,81){\line(18,-31){36}}
\put(208,19){\line(18,31){36}}
\put(226,50){\circle{7}}
\put(200,50){\circle*{7}}
\put(214,70.7){\circle*{7}}
\put(214,29.3){\circle*{7}}
\end{picture}
\end{center}
\quad \newline
\begin{center}
\begin{picture}(330,90)
\put(5,85){Type $IV^*$}
%%%%%%
\put(30,70){(i)}
\linethickness{1.6pt}
\put(45,60){\line(1,0){100}}
\linethickness{0.4pt}
\put(30,30){\line(1,0){35}}
\put(55,20){\line(0,1){50}}
\put(70,40){\line(1,0){35}}
\put(95,20){\line(0,1){50}}
\put(110,30){\line(1,0){35}}
\put(135,20){\line(0,1){50}}
\put(40,30){\circle*{7}}
\put(80,40){\circle*{7}}
\put(120,30){\circle*{7}}
\put(55,30){\circle{7}}
\put(95,40){\circle{7}}
\put(135,30){\circle{7}}
%%%%%%
\put(185,70){(ii)}
\put(200,60){\line(1,0){100}}
\put(185,30){\line(1,0){35}}
\put(210,20){\line(0,1){50}}
\put(225,40){\line(1,0){35}}
\put(250,20){\line(0,1){50}}
\put(265,30){\line(1,0){35}}
\put(290,20){\line(0,1){50}}
\put(230,60){\circle*{7}}
\put(270,60){\circle{7}}
\put(190,10){$\varphi$ permutes three branches.}
\end{picture}
\end{center}
\end{prop}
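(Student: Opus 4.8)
The plan is to reduce everything to a local study at the fixed points lying on a singular fiber $F$, driven by the dichotomy of local actions $A,B$ in (\ref{local}). The engine is the following admissibility rule. Let $P\in F$ be a fixed point, let $\lambda$ be the eigenvalue of $d\varphi_P$ along $F$ (along the relevant branch, if $F$ is singular at $P$) and $\mu$ the eigenvalue in a complementary direction. Since, by \cite{Nikulin,Taki}, $d\varphi_P$ is conjugate to $A$ or $B$ at \emph{every} fixed point on the smooth surface $X$ — in particular at the cusp of a type $II$ fiber and at the triple point of a type $IV$ fiber — the unordered pair $\{\lambda,\mu\}$ equals either $\{\zeta_3^2,\zeta_3^2\}$ (so $P$ is isolated) or $\{1,\zeta_3\}$ (so $P$ lies on a fixed curve, tangent to the eigenvalue-$1$ direction). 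No other pair occurs, and $\det d\varphi_P=\zeta_3$ always. This already forbids two fixed curves from crossing, and distinguishes the three marked point types.

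From this I read off the behaviour component by component. If $C\cong\mathbb{P}^1$ is a $\varphi$-invariant component, then either $\varphi|_C=\mathrm{id}$, so $C$ is a fixed curve (thick), or $\varphi|_C$ has order $3$ with exactly two fixed points of tangential eigenvalues $\zeta_3$ and $\zeta_3^2$; the $\zeta_3^2$-point is forced to be isolated ($\circ$) since $\{\zeta_3^2,\mu\}$ is admissible only for $\mu=\zeta_3^2$, and the $\zeta_3$-point is forced to be intermediate ($\bullet$) since $\{\zeta_3,\mu\}$ is admissible only for $\mu=1$, i.e. a transverse fixed curve crosses $C$ there. A genuinely permuted component carries fixed points only at the nodes it shares with the rest of its orbit, and at a fixed node of two invariant components the tangential pair is again $\{\zeta_3^2,\zeta_3^2\}$ (isolated at the node) or $\{1,\zeta_3\}$ (one component pointwise fixed, the node intermediate for the other). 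Because $\varphi$ fixes the base pointwise (Proposition \ref{ellip1}(2)), it preserves $F$ and induces an order-dividing-$3$ symmetry of the dual graph preserving multiplicities. By Proposition \ref{ellip1}(4) the only singular fibers are $II,IV,I_0^*,IV^*,II^*$, whose dual graphs are a single cuspidal rational curve and the extended Dynkin diagrams $\tilde A_2,\tilde D_4,\tilde E_6,\tilde E_8$; the available order-$3$ symmetries are: none for $\tilde E_8$, the rotation for $\tilde A_2$ and $\tilde E_6$, and a $3$-cycle of the four arms of $\tilde D_4$ fixing its center.

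I then run the rule over each combinatorial type. Starting from an invariant component and propagating the admissible node-pairs along the chains and around the forks produces the drawn configurations uniquely. For the cusp of a type $II$ fiber the weighted parametrization $t\mapsto(t^2,t^3)$ forces the local action, with $t\mapsto\zeta_3^2 t$, to be $\mathrm{diag}(\zeta_3,1)=B$, so the cusp is intermediate; together with $\varphi|_F\neq\mathrm{id}$ (a whole fiber cannot be fixed, as $X^\varphi$ meets the general fiber in $3$ points by Proposition \ref{ellip1}(3)) this gives the two intermediate points. The sub-case splits are forced by the rule: in type $I_0^*$ the difference between pictures (ii) and (iii) is exactly whether the crossing of the central component with the remaining invariant arm is of type $\{1,\zeta_3\}$ (that arm fixed, (ii)) or $\{\zeta_3^2,\zeta_3^2\}$ (isolated there, the two free ends becoming intermediate, (iii)); the analogous binary choice is absent in $IV,IV^*,II^*$ precisely because no \emph{extra} invariant arm survives beside the center, which explains the respective counts of $2,2,1$ pictures. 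Globally, the three fixed points on a smooth fiber assemble the horizontal fixed curves (including the genus-$\geq2$ curve $C^{(g)}$) into a $3$-section over $\mathbb{P}^1$; the intermediate points on each singular fiber are the limits of these three points, tying the local pictures together.

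The main obstacle is completeness rather than consistency: I must show that no admissible node-type assignment other than those drawn survives, which requires running the propagation around the fork of $\tilde D_4$ and along the long arms of $\tilde E_6,\tilde E_8$ \emph{simultaneously}, and using the global input — the generic $3$-to-$1$ count of Proposition \ref{ellip1}(3), the presence of the fixed curve $C^{(g)}$, and the list of $S_X^\varphi$ — to confirm that exactly one isolated point or one fixed-curve crossing is swept out over each generic fiber, so that each listed picture is realized and nothing else is. The delicate local points are the non-nodal singularities (the cusp of $II$ and the triple point of $IV$), but these are resolved by the ambient linearization above: the triple point is isolated when the three lines are invariant ($A$) and intermediate when they are permuted ($B$), matching the two pictures of type $IV$.
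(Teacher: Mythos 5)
Your proposal is correct and, in its skeleton, coincides with the paper's proof: your ``admissibility rule'' is exactly the dichotomy (\ref{local}) together with the observation $\det d\varphi_P=\zeta_3$; your component-by-component rule is the paper's Lemma \ref{p1}; the enumeration of order-three symmetries of the dual graphs followed by propagation of node types is precisely how the paper settles types $I_0^*$, $II^*$, $IV^*$; and your linearization at the type $IV$ triple point (the differential is scalar iff the three branches are invariant) is the paper's blow-up of the center in different clothing. The one genuinely different ingredient is the type $II$ cusp. The paper rules out isolated fixed points there globally: it passes to $X/\varphi$, takes the minimal resolution, and derives a non-integral self-intersection $-(1/3)$ or $-(2/3)$ for the strict transform of $F/\varphi$, a contradiction. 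You instead argue locally: writing the cusp in eigencoordinates as $t\mapsto(t^2u(t),t^3v(t))$ with $\varphi$ acting by $t\mapsto\lambda t$ on the normalization, equivariance forces the tangential and normal eigenvalues to be $(\lambda^2,\lambda^3)$, and admissibility then forces $\lambda=\zeta_3^2$, i.e.\ type $B$ with the fixed branch transverse to $F$, so the cusp and (by Lemma \ref{p1} applied at $t=\infty$) the second fixed point are both intermediate. This is more elementary than the paper's argument, and it even absorbs the preliminary step that $F$ cannot be pointwise fixed, since $\lambda=1$ would give $d\varphi_Q=\mathrm{id}$, which is neither $A$ nor $B$.

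Two points need repair, though neither is fatal. First, your stated reason that the cuspidal fiber cannot be pointwise fixed --- the generic $3$-to-$1$ count of Proposition \ref{ellip1}(3) --- does not work: a vertical component of $X^\varphi$ maps to a single point of the base and is invisible to a generic count. The correct one-line reason is the paper's ($X^\varphi$ is smooth, a cuspidal curve is not), or your own eigenvalue computation as above. Second, your closing concern that completeness requires global input (the $3$-to-$1$ count, the curve $C^{(g)}$, the list of $S_X^\varphi$) is unfounded; the propagation is already deterministic from local data. At a node where two invariant components cross, their tangents exhaust the two eigendirections, so a type-$B$ node forces one of the two components to be pointwise fixed; in particular no horizontal fixed curve can pass through a node. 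Consequently the branch component of $\tilde E_6$ or $\tilde E_8$, having three invariant neighbours, has three fixed points and is therefore pointwise fixed, and everything else propagates uniquely along the arms; the only genuine binary choice is the one you identified for $I_0^*$ (the surviving invariant arm fixed or not), which is exactly what produces pictures (ii) and (iii). Realization of each picture is not asserted by the Proposition, so nothing global needs to be swept out over the base.
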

\begin{proof}
We begin with the following.
\begin{lem}\label{p1}
If $C\simeq \mathbb{P}^1$ is a $(-2)$-curve in $F$ which is preserved and not fixed by $\varphi$, 
then $C$ has one isolated fixed point and the other is intermediate or on a fixed curve.
\end{lem}
\begin{proof}
By the topological Lefschetz formula $C$ has two fixed points. 
Let us choose an inhomogeneous coordinate $z$ on $C$ such that two fixed points 
are $z=0$ and $z=\infty$. If $\varphi$ acts on $z$ as a scalar $a\neq 1$, then $\varphi$ acts
on the local coordinate $z^{-1}$ near $\infty$ by $a^{-1}$.  
By (\ref{local}), we see that $\{a, a^{-1}\} = \{ \zeta_3, \zeta_3^{-1}\}$ and the 
lemma follows.
\end{proof}
The result for cases $I_0^*, II^*$ and $IV^*$ follows easily from this lemma:
we first classify the symmetry of the configuration and then can determine the 
location of fixed points. The lemma helps us to determine the types of fixed points.

For the case $IV$, we easily see that either (i) three curves are permuted
or (ii) three curves are preserved. In each case the center $Q$ is a fixed point, and 
we can blow up the center to obtain the exceptional curve $E$. The automorphism $\varphi$ 
lifts up to the blow up. 
In case (i) $\varphi$ acts on $E\simeq \mathbb{P}(T_Q X)$ nontrivially. Hence  
the center is intermediate by (\ref{local}).
In case (ii), the three intersection points of $E$ and strict transforms are fixed by $\varphi$. 
Thus $E$ is fixed by $\varphi$ and by (\ref{local}) the center is isolated. 

In case of type $II$ we have to exclude the possibilities of isolated fixed points.
First we note that the whole cusp curve cannot be fixed. This is because 
$X^{\varphi}$ contains smooth curves only.
Thus there are two fixed points $P,Q$ on $F$, where $P$ is a smooth point and $Q$ is the cusp.
Assume that either $P$ or $Q$ or both are isolated fixed points.
We consider the minimal resolution $\sigma\colon Y\rightarrow X/\varphi$ of the quotient.
Let $G:=F/\varphi\subset X/\varphi$. Then it is easy to see that $G$ is a smooth 
Weil divisor on $X/\varphi$ with $(G^2)=0$.
(In fact, in an appropriate coordinate $F$ is isomorphic to 
the cuspidal cubic $\{ zy^2=x^3 \}\subset \mathbb{P}^2$ equipped with the 
order three automorphism $(x,y,z)\mapsto (\zeta_3 x, y, z)$. Here $P$ is $(0,1,0)$ and 
$Q$ is $(0,0,1)$. )

Since in either case the isolated fixed point is of type $\frac{1}{3}(1,1)$, 
the exceptional curves $E_i\ (i\leq 2)$ of $\sigma$
are $(-3)$-curves and we obtain the relation 
\[\sigma^* G - \sum_i \frac{1}{3}E_i = \overline{G},\]
where $\overline{G}$ is the strict transform. But here the self-intersection number of 
left hand side is $(G^2)-\sum (3/9)=-(1/3) \text{ or } -(2/3) \not\in \mathbb{Z}$, 
which is impossible since $Y$ is smooth 
near $\overline{G}$. Hence we see that there are no isolated fixed points 
inside cusp fiber $F$. This concludes Proposition \ref{fiberwise}.

\erase{
In case $II$ we have to exclude the possibilities of isolated fixed points.
\begin{lem}
Let $Y$ be a smooth surface and $\varphi \neq id$ an automorphism of finite order. 
Assume that it fixes a curve $C$, preserves a curve $D$ and 
$P\in C\cap D$ is a smooth point of $C$ and $D$. Then $C$ and $D$ are transversal at $P$. 
\end{lem}
\begin{proof}
The local action of $\varphi$ at $P$ can be linearized as 
$\begin{pmatrix} 1 & 0 \\ 0 & \zeta \end{pmatrix}$ and $T_P C$, $T_P D$ are eigenspaces
of this action, where $\zeta$ is some root of unity. 
Since they belong to different eigenvalues they are transversal.
\end{proof}
First we note that the whole cusp curve cannot be fixed. This is because 
$X^{\varphi}$ contains smooth curves only.
Assume that the cusp is an isolated fixed point. If
we blow up the cusp and obtain the exceptional curve $E$, 
then the lift of $\varphi$ acts on $E$ trivially.
On the other hand the strict transform
$\overline{F}$ of $F$ is also $\varphi$-stable. 
These curves are tangent to each other, hence by the lemma 
we obtain contradiction. Thus the cusp is intermediate.

Finally assume that the type $II$ fiber $F$ has an isolated fixed point at 
a smooth point and an intermediate fixed point at the cusp. 
Then we have a fixed curve $C$ which passes through the cusp $P$ and
we must have $(C, F)_P=3$ by Proposition \ref{ellip1}. 
We consider the blow up at $P$. $E$ denotes the exceptional curve and $\overline{C}$, 
$\overline{F}$ the strict transforms. Their intersection point is denoted by $Q$. 
\begin{center}
\begin{picture}(300,60)
\linethickness{1.6pt}
\put(120,30){\line(1,0){60}}
\linethickness{0.4pt}
\put(150,0){\line(0,1){60}}
\qbezier(180,0)(120,30)(180,60)
\put(139,50){$E$}
\put(165,39){$\overline{F}$}
\put(120,17){$\overline{C}$}
\put(139,20){$Q$}
\end{picture}
\end{center}
We can linearize the induced action of $\varphi$ at $Q$ as 
\[\varphi \colon 
\begin{pmatrix} x \\ y \end{pmatrix}
\mapsto 
\begin{pmatrix} 1 & 0 \\ 0 & \zeta_3 \end{pmatrix}
\begin{pmatrix} x \\ y \end{pmatrix},
\]
In this coordinate, $\overline{C}$ is given locally by $\{y=0\}$. Since other two curves 
are preserved by $\varphi$ and intersect 
$\overline{C}$ at $Q$ transversally, their equations are of the form
\begin{equation}\label{E}
\{x+Ax^2+Bx^3+Cy^3+\cdots =0\}.
\end{equation}
We can change the coordinate $x$ so that $E=\{x=0\}$. Then by (\ref{E}) we have
$(E, \overline{F})_Q\geq 3$. This is a contradiction. Thus type $II$ fiber does
not have an isolated fixed point.
}
\end{proof}
\begin{rem}\label{abc}
\begin{enumerate}
\item If $\varphi$ is of elliptic type, then $C^{(g)}$ intersects every fiber and $F$ has 
at least one intermediate fixed point. Thus $I_0^*$-(i) and $I_0^*$-(ii) do not occur.
\item If moreover $f$ admits a fixed $(-2)$-curve which 
is a section, then obviously $IV$-(i) and $IV^*$-(ii) cannot occur.
\item On the other hand, if $\varphi$ is not of elliptic type, these actions can arise.
See Examples \ref{exa1}, \ref{exa2}.
\end{enumerate}
\end{rem}

\subsection{Automorphisms of elliptic type.}

We keep the elliptic fibration $f$ of Proposition \ref{ellip1}.
In this section we describe singular fibers of $f$ and the action of $\varphi$ on fibers 
combinatorically, relying on the results of the previous subsection.
First we recall the following
\begin{prop}[\cite{AS}, \cite{Taki}]\label{order3}
There exist exactly eleven fixed lattices for non-symplectic automorphisms of elliptic type. 
The correspondence between $S_X^{\varphi}$ and the fixed locus $X^{\varphi}$ is as follows.
\end{prop}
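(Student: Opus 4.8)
The plan is to obtain Proposition \ref{order3} as the elliptic-type portion of the full classification of order three non-symplectic automorphisms, separating off the genus $\geq 2$ cases by a fixed-point count. First I would recall, as noted before Lemma \ref{nef}, that the fixed lattice $S_X^{\varphi}$ is $3$-elementary and hyperbolic, hence one of the $24$ lattices of \cite[Lemma 2.3]{Taki}, each determined by its invariants $(r,a)$ with $r=\rk S_X^{\varphi}$ and discriminant group $(\mathbb{Z}/3)^{\oplus a}$. By the surjectivity of the period map together with Nikulin's criterion for primitive embeddings, each of these lattices is actually realized as $S_X^{\varphi}$ for some $(X,\varphi)$. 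So the content of the proposition is (i) to compute the fixed locus $X^{\varphi}$ from $(r,a)$ and (ii) to decide for which lattices this locus contains a curve of genus $g\geq 2$.

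For step (i) I would run the two Lefschetz fixed point formulas. Since $\varphi$ has order three and acts on $H^{2,0}(X)$ by $\zeta_3$, the eigenvalues $\zeta_3,\zeta_3^2$ occur on $H^2(X,\mathbb{C})$ with equal multiplicity $(22-r)/2$, so $\mathrm{tr}(\varphi^*\mid H^2)=(3r-22)/2$ and the topological Lefschetz formula gives
\[
\chi(X^{\varphi})=2+\mathrm{tr}(\varphi^*\mid H^2)=\tfrac{3r-18}{2}.
\]
Writing $X^{\varphi}=\{n\ \text{points}\}\sqcup C^{(g)}\sqcup R_1\sqcup\cdots\sqcup R_{k}$ with the $R_i$ rational, and using that every fixed curve is smooth with $C^2=2g(C)-2$ by adjunction ($K_X\sim 0$), this reads $n+2k-2g=(3r-18)/2$. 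The holomorphic Lefschetz formula of Atiyah--Bott, evaluated with the only possible local models $A,B$ of (\ref{local}) (isolated points contribute through $A$, fixed curves through the normal eigenvalue $\zeta_3$ of $B$), provides a second independent relation between $n$ and $\sum_C(1-g(C))$, and the discriminant datum $a$ supplies the last one through the equivariant local analysis near the fixed curves, exactly as in \cite{AS} and \cite{Taki}. Solving these determines $n,k,g$ as explicit functions of $(r,a)$, whose output I would quote. Note that these formulas see only the cohomological action $\varphi^*\mid H^2$ and the local models, not the full Picard lattice; hence $X^{\varphi}$ depends on $(r,a)$ alone and the computation applies to arbitrary $\varphi$, not merely to the generic case $S_X=S_X^{\varphi}$.

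A small but essential point I would settle separately is that the top-genus curve $C^{(g)}$ is unique and well defined. Two distinct fixed curves are disjoint, since a common point would force the local action to fix two directions, contradicting $\varphi\neq\mathrm{id}$. Moreover, once one fixed curve $C^{(g)}$ has $g\geq 2$, so that $(C^{(g)})^2=2g-2>0$, every other fixed curve lies in the orthogonal complement $(C^{(g)})^{\perp}$ inside the hyperbolic lattice $S_X^{\varphi}$, which is negative definite; hence each such curve has negative square and is therefore a $(-2)$-curve. In particular $C^{(g)}$ is the unique fixed curve of genus $\geq 1$, so the phrase ``$X^{\varphi}$ contains a curve of genus $g\geq 2$'' singles out a well-defined $C^{(g)}$ and is equivalent to $g\geq 2$ for the quantity computed above.

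For step (ii) I would tabulate $g=g(r,a)$ across the $24$ lattices and retain precisely those with $g\geq 2$; direct inspection yields exactly eleven, and reading off the corresponding triples $(n,k,g)$ produces the stated correspondence between $S_X^{\varphi}$ and $X^{\varphi}$. The main obstacle is not any single deduction but the bookkeeping: one must check that the Lefschetz data are consistent for each surviving lattice (in particular $n,k\geq 0$ and that the rational curves and the high-genus curve assemble into a configuration compatible with $S_X^{\varphi}$), and that no two distinct $(r,a)$ are merged. Since all of this is already settled in \cite{AS,Taki}, the proof reduces to extracting the eleven elliptic-type rows and assembling the table.
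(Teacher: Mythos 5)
The paper offers no proof of Proposition \ref{order3}: it is imported verbatim from \cite{AS} and \cite{Taki}, which is exactly why it is stated with those citations. Your sketch reconstructs precisely the method those references use --- the classification of $3$-elementary hyperbolic lattices by their invariants $(r,a)$, the topological and holomorphic Lefschetz formulas evaluated with the local models $A$, $B$ of (\ref{local}), and the extraction of the rows with $g\geq 2$ --- so in substance you are re-proving the cited result rather than departing from anything in this paper. Your structural observations are correct and genuinely useful: distinct fixed curves are disjoint (a common point would force the differential to fix two independent directions, hence be the identity), and once $(C^{(g)})^2=2g-2>0$, every other fixed curve lies in the negative definite orthogonal complement of $C^{(g)}$ in the hyperbolic lattice $S_X^{\varphi}$ and is therefore a $(-2)$-curve. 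This last point also substantiates what the paper uses implicitly, namely that the table holds for \emph{arbitrary} $\varphi$ of elliptic type and not only in the generic case $S_X=S_X^{\varphi}$, since the Lefschetz data see only $\varphi^*\mid H^2$ and the local models.

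One slip you should fix: in the topological Lefschetz bookkeeping you wrote $n+2k-2g=(3r-18)/2$. Since $\chi(C^{(g)})=2-2g$, the correct statement is $n+2k+2-2g=(3r-18)/2$, i.e. $n+2k-2g=(3r-22)/2$. For instance in row No.~1 of the table ($S_X^{\varphi}=U$, so $r=2$, with $X^{\varphi}=C^{(5)}\amalg\mathbb{P}^1$, so $n=0$, $k=1$, $g=5$) one has $n+2k-2g=-8=(3\cdot 2-22)/2$, whereas your formula would give $-6$. Because your argument ultimately defers the solution of the resulting system to \cite{AS, Taki}, this does not break the approach, but as displayed the equation is false and would shift every count by $2$ if carried through literally.
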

\begin{longtable}{|c|c|c|}
\hline
No. & $S_{X}^{\varphi}$ & $X^{\varphi }$ \\
\hline
1 & $U$ & $C^{(5)}\amalg \mathbb{P}^{1}$ \\
\hline
2 & $U(3)$ & $C^{(4)}$ \\
\hline
3 & $U\oplus A_{2}$ & $C^{(4)}\amalg \mathbb{P}^{1}\amalg \{pt\}$ \\
\hline
4 & $U(3)\oplus A_{2}$ & $C^{(3)}\amalg \{pt\}$ \\
\hline
5 & $U\oplus A_{2}^{\oplus 2}$ & $C^{(3)}\amalg \mathbb{P}^{1}\amalg \{pt\} \times 2$ \\
\hline
6 & $U(3)\oplus A_{2}^{\oplus 2}$ & $C^{(2)}\amalg \{pt\} \times 2$ \\
\hline
7 & $U\oplus E_{6}$ & $C^{(3)}\amalg \mathbb{P}^{1}\times 2 \amalg \{pt\} \times 3$ \\
\hline
8 & $U\oplus A_{2}^{\oplus 3}$ & $C^{(2)}\amalg \mathbb{P}^{1}\amalg \{pt\} \times 3$ \\
\hline
9 & $U\oplus E_{8}$ & $C^{(3)}\amalg \mathbb{P}^{1}\times 3\amalg \{pt\} \times 4$ \\
\hline
10 & $U\oplus E_{6}\oplus A_{2}$ & $C^{(2)}\amalg \mathbb{P}^{1}\times 2\amalg \{pt\} \times 4$ \\
\hline
11 & $U\oplus E_{8}\oplus A_{2}$ & $C^{(2)}\amalg \mathbb{P}^{1}\times 3\amalg \{pt\} \times 5$ \\
\hline
\end{longtable}

We divide these eleven fixed lattices into {\em{Jacobian types}} and {\em{non-Jacobian types}}. 
Non-Jacobian types consists of No. 2,4,6 and Jacobian types include others.
Equivalently $S_X^{\varphi}$ in the table is of Jacobian type if there exists an embedding of 
lattices $U\subset S_X^{\varphi}$. 

\begin{prop}\label{ellip2}
If $\varphi$ is of elliptic type and of Jacobian type, then 
after a suitable rechoice of the zero-element in Corollary \ref{enef}, 
the fibration $f$ of Proposition \ref{ellip1} has a section which is fixed by $\varphi$. 
\end{prop}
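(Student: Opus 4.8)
The plan is to produce the required section from the lattice structure of $S_X^{\varphi}$ and then to verify geometrically that it lies in $X^{\varphi}$. By the definition of Jacobian type we are given an embedding $U\subset S_X^{\varphi}$, and since $U$ is unimodular it splits off as an orthogonal direct summand, so $S_X^{\varphi}=U\oplus W$ with $W$ negative definite. Write $U=\langle e,f\rangle$ with $(e^2)=(f^2)=0$ and $(e,f)=1$, and set $s:=f-e$, so that $(e,s)=1$ and $(s^2)=-2$. The point of working with $e$ rather than with the class from Corollary \ref{enef} is that $(e,S_X^{\varphi})=\mathbb{Z}$, i.e. $e$ is \emph{non-divisible}; this is exactly what forces the associated pencil to carry a section, and it is exactly the feature that fails for the non-Jacobian lattices $U(3)$, $U(3)\oplus A_2$, $U(3)\oplus A_2^{\oplus 2}$, where every isotropic vector pairs into $3\mathbb{Z}$.

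First I would make $e$ nef. Applying the $\varphi$-equivariant reduction of Lemma \ref{nef} (reflecting in $\varphi$-orbits of $(-2)$-curves, so as to stay inside $S_X^{\varphi}$ and preserve the positive cone) replaces $e$ by a nef isotropic class $e'=w(e)$ in the same orbit under the group generated by the Picard-Lefschetz reflections, where $w$ is an isometry of $S_X^{\varphi}$. Setting $f':=w(f)$ and $s':=w(s)=f'-e'$, we still have $e',s'\in S_X^{\varphi}$ with $(e',f')=1$, $(e',s')=1$ and $(s'^2)=-2$; in particular $e'$ remains non-divisible. This $e'$ is the rechosen zero-element: it defines a $\varphi$-stable elliptic pencil $f'$ of the type described in Proposition \ref{ellip1}, with fibre class $e'$.

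Next I would realize a section geometrically. By Riemann--Roch either $s'$ or $-s'$ is effective; since $e'$ is nef and $(e',s')=1>0$, the class $-s'$ cannot be effective, so $s'$ is. Because $\varphi$ preserves both the class $s'$ and the fibration $f'$, it permutes the irreducible components of any effective representative and carries horizontal components to horizontal ones. Writing such a representative as $S+V$ with $V$ vertical, the equality $(e',s')=1$ forces a single horizontal component $S$ of multiplicity one, meeting the general fibre exactly once; hence $S\cong\mathbb{P}^1$ is a section of $f'$, and by uniqueness of the horizontal part $\varphi(S)=S$.

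Finally, a $\varphi$-stable section is automatically fixed: by Proposition \ref{ellip1}(2) the automorphism $\varphi$ preserves every fibre $F_t$, and it preserves $S$, so it fixes the single point $S\cap F_t$ for every $t$; therefore $S\subset X^{\varphi}$, which is the assertion. The main obstacle is the middle step, namely guaranteeing that the nef-reduction of $e$ lands on a class that is still non-divisible (hence still carries a section) rather than on some divisible isotropic class; this is handled by starting the reduction from $e$ itself and tracking the companion vector $f$ through the isometry $w$, using precisely the $\varphi$-orbit reflection mechanism already set up in the proof of Lemma \ref{nef}.
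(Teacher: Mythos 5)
Your argument is correct, but it takes a genuinely different route from the paper's. The paper proves this proposition geometrically: combining the fixed-locus table of Proposition \ref{order3} with the fiberwise analysis of Proposition \ref{fiberwise}, it counts fixed $(-2)$-curves against isolated fixed points, using the fact that only fibers of type $II^*$ and $IV^*$-(i) can contain fixed $(-2)$-curves. For Nos.\ 1, 3, 5, 7, 9, 11 the resulting system ($2m+n$ fixed curves, $4m+3n$ at most the isolated points) has no solution, so some fixed $(-2)$-curve must be horizontal, and by Proposition \ref{ellip1}(2),(3) together with $g\geq 2$ it is then a fixed section; for Nos.\ 8 and 10 the paper rechooses the zero-element via the isomorphisms $U\oplus A_2^{\oplus 3}\simeq U(3)\oplus E_6$ and $U\oplus E_6\oplus A_2\simeq U(3)\oplus E_8$, distinguishing zero-elements by whether $(D,S_X^{\varphi})$ equals $\mathbb{Z}$ or $3\mathbb{Z}$. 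You instead run the classical argument that a $U$-summand of the N\'eron-Severi lattice yields a Jacobian elliptic fibration, made $\varphi$-equivariant: split off $U=\langle e,f\rangle\subset S_X^{\varphi}$, make $e$ nef by the reflection mechanism inside the proof of Lemma \ref{nef} (which indeed preserves $S_X^{\varphi}$, since each step adds a multiple of a $\varphi$-invariant orbit sum), and track the companion class $s'=f'-e'$ with $(s'^2)=-2$, $(e',s')=1$ to produce the section; the triviality of the action on the base (Proposition \ref{ellip1}(2), which is where elliptic type enters) then upgrades $\varphi$-stability of the section to pointwise fixedness. Your proof is uniform over all eight Jacobian lattices and bypasses the classification table entirely; the paper's longer route has the compensation that its counting scheme and explicit fiber-component bases such as (\ref{basis}) are reused later, in Theorem \ref{ellip3} and Lemma \ref{nocurve1}.

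Two steps in your write-up deserve more care, though neither is a fatal gap. First, the claim that $\varphi$ ``permutes the irreducible components of any effective representative'' is not literally true: $\varphi$ carries your chosen representative $S+V$ of $|s'|$ to a possibly different effective member $\varphi(S)+\varphi(V)$ of the same linear system. The argument is rescued by exactly the uniqueness you invoke, but that uniqueness needs its one-line proof: if $S_1+V_1\sim S_2+V_2$ with $S_i$ sections and $V_i$ vertical, then $S_1-S_2$ is linearly equivalent to a vertical divisor, so restriction to the generic fiber gives $(S_1)_{\eta}\sim (S_2)_{\eta}$ as degree-one divisors on a genus-one curve over the function field, forcing $(S_1)_{\eta}=(S_2)_{\eta}$ and hence $S_1=S_2$ (equivalently, appeal to the injectivity in the Shioda--Tate theorem). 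Second, Lemma \ref{nef} as stated only asserts the existence of some nef $D'$; you need the finer fact, extracted from its proof, that $D'=w(D)$ for an isometry $w$ of $S_X$ preserving $S_X^{\varphi}$, and one may first have to replace $(e,f)$ by $(-e,-f)$ to make $e$ effective before the reduction starts. You flagged this dependence explicitly, so it is a presentational point rather than a missing idea.
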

\begin{proof}
If there exists a fixed $(-2)$-curve that intersects fibers, 
then together with the curve $C^{(g)}$ 
they define a generically at least $3$ to $1$ map onto the base.
By Proposition \ref{ellip1} (2),(3) this implies that the $(-2)$-curve is a fixed section.

Let us assume that every fixed $(-2)$-curve is inside fibers. We denote by $m$ (resp. $n$) the 
number of type $II^*$ (resp. type $IV^*$-(i) ) fibers of $f$  
in the notation of Proposition \ref{fiberwise}. We remark that 
they are the only fibers that may have a 
fixed $(-2)$-curve, see also Remark \ref{abc}. Then we obtain
\begin{equation*}
\begin{split}
2m+n &= \# (\text{fixed $(-2)$-curve}),\\
4m+3n & \leq \# (\text{isolated fixed points of $\varphi$}).
\end{split}
\end{equation*}
In No.s 1,3,5,7,9,11 there exist no solutions to this restriction, hence at least one 
fixed $(-2)$-curve is outside the fibers.

In No.s 8 and 10, we have solutions $(m,n)=(0,1)$ and $=(1,0)$ respectively, and
we have 
to make a rechoice of suitable $f$. This is related to the isomorphisms of 
lattices
\[U\oplus A_2^{\oplus 3} \simeq U(3)\oplus E_6\ \  (\text{resp. } U\oplus E_6\oplus A_2 \simeq U(3)\oplus E_8).\]
We see from this isomorphism that in each case 
$S_X^{\varphi}$ has two inequivalent zero-elements, $D_1$ and $D_2$, such
that $(D_1, S_X^{\varphi})=\mathbb{Z}$ and $(D_2, S_X^{\varphi})=3\mathbb{Z}$. 
By the proof of Lemma \ref{nef} each $D_i$ can be sent to a nef element with the same property
for $i=1,2$. 

Suppose in No. 8 that we have one $IV^*$-(i) fiber $F$.
Since the components of this fiber are all preserved by $\varphi$, we can consider the 
sublattice $L\subset S_X^{\varphi}$ generated by $C=C^{(2)}$ 
and the seven components of $F$.
We will give an explicit isomorphism $L\simeq U(3)\oplus E_6$. 

The intersection numbers can be read off from Proposition \ref{fiberwise}.
The fixed curve $C$ intersects each multiplicity one component $e_1, e_2, e_3$ transversally. The seven components 
other than $e_3$ constitutes a lattice $M$ isomorphic to $E_6$. 
Let $e_i^*$ denote the element in 
$M^*$ which is dual to $e_i$, $i=1,2$. 
We can check that $e_1^*+e_2^*\in M$. 
Then we obtain
\begin{equation}\label{basis}
\begin{split}
L&= \bracket{M, e_3, C}\\
 &= \bracket{M, F, C-e_1^*-e_2^*}\\
 &\simeq U(3)\oplus E_6.
\end{split}
\end{equation}
From this isomorphism, $(m,n)=(0,1)$ occurs only if $(F, S_X^{\varphi})=3\mathbb{Z}$. 
Hence if we choose $D_1$ with $(D_1, S_X^{\varphi})=\mathbb{Z}$ as the beginning element 
in Lemma \ref{nef} the solution $(m,n)=(0,1)$ does not occur. 
In this fibration at least one fixed $(-2)$-curve is outside fibers.

The proof for No. 10 is the same.
\end{proof}
We obtain the models for Jacobian type.
\begin{thm}\label{ellip3}
Let $\varphi$ be a non-symplectic automorphism of order three of elliptic type and of Jacobian
type. Then $X$ has an elliptic pencil
$f\colon X\rightarrow \mathbb{P}^1$ which is stable under $\varphi$ and has a fixed section.
The set of singular fibers $\mathrm{Sing} (f)$ is one of the following. 
%In the table, $T$ denotes the trivial lattice, namely the lattice generated by all 
%fiber components and the fixed section.
\end{thm}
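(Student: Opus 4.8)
The plan is to produce the explicit list of singular-fiber configurations $\mathrm{Sing}(f)$ for each of the eight Jacobian-type fixed lattices (No.s 1,3,5,7,8,9,10,11) by combining three pieces of information that are already established: the global count of fixed-locus data from Proposition \ref{order3}, the fiberwise local classification of Proposition \ref{fiberwise} (as constrained by Remark \ref{abc}), and the fact, just proved in Proposition \ref{ellip2}, that after a suitable rechoice the fibration $f$ carries a fixed section. The proof is essentially a bookkeeping argument: for each lattice $S_X^{\varphi}$ I read off from the table the triple consisting of the genus $g$ of $C^{(g)}$, the number $r$ of fixed rational curves $\mathbb{P}^1$, and the number $s$ of isolated fixed points, and I must solve the combinatorial problem of distributing these data among admissible singular fibers.

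First I would set up the numerical constraints. Since $f$ has a fixed section $s_0$ and the curve $C^{(g)}$ is a $3$-section (Proposition \ref{ellip1}(3)), and since by Remark \ref{abc}(1) every fiber has at least one intermediate fixed point so that types $I_0^*$-(i), $I_0^*$-(ii) are excluded, the only fibers that may appear are those of Proposition \ref{fiberwise}, namely $I_0^*$-(iii), $II$, $II^*$, $IV$-(ii), $IV^*$-(i). Moreover by Remark \ref{abc}(2) the presence of a fixed section rules out $IV$-(i) and $IV^*$-(ii). For each admissible singular fiber I tabulate its contribution to three global invariants: the Euler number (whose total over all fibers must equal $24$), the number of fixed $\mathbb{P}^1$'s it contains, and the number of isolated fixed points it carries. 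The Euler-number equation $\sum_F e(F) = 24$ together with the matching of $r = \#\{\text{fixed }\mathbb{P}^1\}$ and $s = \#\{\text{isolated points}\}$ to the values dictated by Proposition \ref{order3} gives a finite linear Diophantine system for each lattice.

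Next I would solve these systems lattice by lattice. The fixed section accounts for one fixed rational curve that is a $3$-section rather than a fiber component, so I must be careful to separate the fixed $\mathbb{P}^1$'s lying in fibers (contributing to the fiber types) from the section(s) lying transverse to fibers; the genus computation for $C^{(g)}$ via the adjunction/Hurwitz count on the base also pins down how the $3$-section meets the singular fibers. For the lattices containing large summands $E_6$ and $E_8$ (No.s 7,9,10,11) the fiber types $IV^*$-(i) and $II^*$ are forced, and the remaining Euler number $24$ must be filled by $II$, $IV$-(ii), $I_0^*$-(iii) fibers; here the isolated-point count is the sharpest discriminator. I expect each lattice to admit a unique admissible configuration (up to the trivial relabeling of the base $\mathbb{P}^1$), which is exactly what the theorem asserts by displaying a single row per lattice.

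The main obstacle, as I see it, is not the linear algebra but excluding spurious solutions: the naive Euler-number and fixed-point equations may admit several integer solutions, and I will need the finer information to discard all but the geometric one. The decisive extra input is the embedding of $S_X^{\varphi}$ into $S_X$ together with the lattice-theoretic identities already exploited in Proposition \ref{ellip2} (for instance $U\oplus A_2^{\oplus 3}\simeq U(3)\oplus E_6$), which force which multiplicity-one components of a reducible fiber meet the section and thereby determine the fiber type uniquely rather than merely up to Euler number. Concretely, for a candidate configuration I would verify that the sublattice generated by the section, the fiber components, and $C^{(g)}$ actually recovers the prescribed $S_X^{\varphi}$; any arithmetically-allowed but geometrically-impossible distribution fails this lattice-embedding check. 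Once each system is shown to have a unique solution passing this test, the tabulated list of $\mathrm{Sing}(f)$ is complete and the theorem follows.
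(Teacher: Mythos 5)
Your overall strategy is the paper's: take the fibration with fixed section from Proposition \ref{ellip2}, exclude fiber types via Proposition \ref{fiberwise} and Remark \ref{abc}, and then do bookkeeping with the Euler number ($\sum_F e(F)=24$), the number of fixed $(-2)$-curves, and the number of isolated fixed points from Proposition \ref{order3}. The paper does exactly this, in the form $2m+n = \#(\text{fixed }(-2)\text{-curves})-1$ and $4m+3n\leq \#(\text{isolated fixed points})$, where $m,n$ count the $II^*$ and $IV^*$-(i) fibers; this system has a unique solution $(m,n)$ for each lattice, and the remaining Euler number and isolated points are then distributed among $II$, $IV$-(ii) and $I_0^*$-(iii) fibers.

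However, there is a genuine error in your final step. You assert that each lattice should admit a \emph{unique} admissible configuration and that ``the theorem asserts [this] by displaying a single row per lattice,'' proposing to use lattice-embedding checks to discard the other integer solutions as ``spurious.'' This misreads the statement: the table has \emph{several} rows per lattice (3a/3b; 5a/5b/5c; 8a--8d; 10a/10b; 11a/11b), and those multiple rows are precisely the multiple solutions of the Diophantine system. For instance, in No.~8 (one fixed $(-2)$-curve, i.e.\ only the section, and three isolated points) the constraints $a+b=3$, $4a+6b+2c=24$ for $a$ fibers of type $IV$-(ii), $b$ of type $I_0^*$-(iii), $c$ of type $II$ have the four solutions $(3,0,6),(2,1,5),(1,2,4),(0,3,3)$, which are exactly rows 8a--8d; all of them are geometrically realized (a $IV$-(ii) fiber degenerates to an $I_0^*$-(iii) fiber under $\varphi$-equivariant deformation, as the paper remarks), and all are compatible with the same $S_X^{\varphi}$, since a $IV$-(ii) fiber and an $I_0^*$-(iii) fiber each contribute a copy of $A_2$ to the invariant lattice --- the difference shows up only in $S_X$ and in $\mathrm{Sing}(Z)$. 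So the pruning step you propose would delete correct entries and yield a strictly shorter, hence wrong, list. The uniqueness that does hold, and that the paper isolates, is only for the pair $(m,n)$ counting $II^*$ and $IV^*$-(i) fibers, because those are the fiber types pinned down by the counts of fixed $(-2)$-curves and isolated points; the ambiguity among $II$, $IV$-(ii), $I_0^*$-(iii) is not to be resolved but recorded.
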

\begin{longtable}{cl|lc}\toprule
No. & $\mathrm{Sing} (f)$  & $\mathrm{Sing} (Z)$ & $\rho (Z)$ \\ \midrule
1 & $12\cdot II$  & $A_1(2)$ & $1$ \\ \midrule
3a & $IV$-(ii)$+10\cdot II$  & $A_2(1,2)$ & $2$ \\ 
3b  & $I_0^*$-(iii)$+9\cdot II$  & $A_2(1,2)+A_1$ & $1$ \\ \midrule
5a & $2\cdot IV$-(ii)$+8\cdot II$  & $A_3(1,1)$ & $3$ \\
5b  & $IV$-(ii)$+I_0^*$-(iii)$+7\cdot II$  & $A_3(1,1)+A_1$ & $2$ \\
5c  & $2\cdot I_0^*$-(iii)$+6\cdot II$  & $A_3(1,1)+2 A_1$ & $1$ \\ \midrule
7 & $IV^*$-(i)$+8\cdot II$  & $D_4(2)$ & $2$ \\ \midrule
8a & $3\cdot IV$-(ii)$+6\cdot II$  & $D_4(1)$ & $4$ \\
8b  & $2\cdot IV$-(ii)$+I_0^*$-(iii)$+5\cdot II$  & $D_4(1)+A_1$ & $3$ \\
8c  & $IV$-(ii)$+2\cdot I_0^*$-(iii)$+4\cdot II$  & $D_4(1)+2 A_1$ & $2$ \\
8d  & $3\cdot I_0^*$-(iii)$+3\cdot II$  & $D_4(1)+3 A_1$ & $1$ \\ \midrule
9 & $II^* + 7\cdot II$  & $D_5(2)$ & $1$ \\ \midrule
10a & $IV^*$-(i)$+IV$-(ii)$+6\cdot II$  & $D_5(1)$ & $3$ \\
10b   & $IV^*$-(i)$+I_0^*$-(iii)$+5\cdot II$  & $D_5(1)+A_1$ & $2$ \\ \midrule
11a & $II^*+ IV$-(ii)$+5\cdot II$  & $D_6(1)$ & $2$ \\
11b   & $II^* + I_0^*$-(iii)$+4\cdot II$  & $D_6(1)+A_1$ & $1$ \\ \bottomrule
\end{longtable}
We treat the two columns on the right in the next section, see Theorem \ref{singu}.
\begin{proof}
The fibration 
$f$ is the one obtained in Proposition \ref{ellip2}.
Let $m$ and $n$ be the numbers of singular fibers of type $II^*$ and $IV^*$-(i) 
respectively. As in Proposition \ref{ellip2} we obtain 
\begin{equation*}
\begin{split}
2m+n &= \# (\text{fixed $(-2)$-curves}) -1,\\
4m+3n & \leq \# (\text{isolated fixed points of $\varphi$}).
\end{split}
\end{equation*}
In every case this has a unique solution. Thus we know the number of 
singular fibers of type $II^*$ and $IV^*$-(i). 
From Proposition \ref{fiberwise} and Remark \ref{abc} we see that other fibers are 
of type either $I_0^*$-(iii), $II$ or $IV$-(ii). The restriction on the number of 
isolated fixed points and the topological Euler number of $X$ leads us to the table.
\end{proof}
We may say that "type $IV$-(ii) fibers deform into $I_0^*$-(iii) fibers 
under the $\varphi$-equivariant deformations preserving Jacobian 
elliptic fibrations". 

In non-Jacobian types, there are no fixed $(-2)$-curves, hence $C^{(g)}$ is 
a tri-section by Proposition \ref{ellip1}. 
\begin{thm}\label{ellip4}
Let $\varphi$ be a non-symplectic automorphism of order three of elliptic type and of 
non-Jacobian type.
Let $f$ be the elliptic fibration of Proposition \ref{ellip1}.
Then the singular fibers of $f$ are as in the following table.
\end{thm}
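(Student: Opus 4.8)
The statement to prove is Theorem \ref{ellip4}, which classifies the singular fibers of the $\varphi$-stable elliptic fibration $f$ in the non-Jacobian cases, namely No.\ 2, 4, 6 with $S_X^{\varphi} = U(3)$, $U(3)\oplus A_2$, $U(3)\oplus A_2^{\oplus 2}$.

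The plan is to run the same bookkeeping argument as in the proof of Theorem \ref{ellip3}, but adapted to the feature that distinguishes non-Jacobian types: by Proposition \ref{ellip2} there is now \emph{no} fixed $(-2)$-curve that is a section, and in fact (as noted before the theorem) there are no fixed $(-2)$-curves at all, so the fixed curve $C^{(g)}$ must be a tri-section of $f$. First I would combine Proposition \ref{ellip1} with Proposition \ref{fiberwise} and Remark \ref{abc}: since $\varphi$ acts trivially on the base and $C^{(g)}$ meets every fiber, each singular fiber must carry at least one intermediate fixed point, so the fiber types $I_0^*$-(i) and $I_0^*$-(ii) are excluded, and the admissible singular fibers are exactly those in the list of Proposition \ref{fiberwise} that possess an intermediate fixed point, i.e.\ $I_0^*$-(iii), $II$, $IV$-(i), $IV$-(ii), $II^*$, $IV^*$-(i), $IV^*$-(ii). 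The absence of any fixed $(-2)$-curve immediately rules out $II^*$, $IV^*$-(i) (each of which contributes a fixed $(-2)$-curve), which is the analogue of setting $m=n=0$ in the counting used for Theorem \ref{ellip3}.

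The core of the argument is then a counting identity. I would tabulate, for each admissible fiber type $T$, the triple of contributions
\begin{equation*}
\bigl(e(T),\ \#\{\text{isolated fixed points in }T\},\ \#\{\text{fixed }(-2)\text{-curves in }T\}\bigr),
\end{equation*}
read directly off the pictures in Proposition \ref{fiberwise}, where $e(T)$ is the topological Euler number of the Kodaira fiber. Summing $e(T)$ over all singular fibers gives $24$ (the Euler number of the $K3$ surface $X$, smooth fibers contributing $0$), summing the isolated fixed points must equal the number $\#\{pt\}$ of isolated fixed points recorded for the relevant row of the table in Proposition \ref{order3} (four, six and eight respectively? — in fact $1$, $1$, $2$ as read from the $X^{\varphi}$ column), and summing the fixed $(-2)$-curves must give the number of $\mathbb{P}^1$-components of $X^{\varphi}$, which in the non-Jacobian rows $2,4,6$ is zero. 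Together with the tri-section constraint (so $C^{(g)}$ contributes exactly three intersection points per fiber, forcing the intermediate points to match the genus count), these linear equations pin down the multiplicities of each fiber type.

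The main obstacle I anticipate is not the counting itself but showing that the Euler-number and fixed-point equations have a \emph{unique} non-negative integer solution in each of the three cases, and ruling out spurious combinatorial solutions that violate the finer constraints — for instance configurations that would force a fixed $(-2)$-curve and thereby contradict non-Jacobianness, or that are incompatible with $C^{(g)}$ being a genuine smooth tri-section of the prescribed genus. To handle this I would use the genus of $C^{(g)}$ (namely $g=4,3,2$ in rows $2,4,6$) as an extra equation via the fiberwise count of intermediate points, since $C^{(g)}\to\mathbb{P}^1$ is a triple cover whose ramification, by Riemann--Hurwitz, is controlled by exactly the intermediate and fixed-curve data of Proposition \ref{fiberwise}; this should eliminate the unwanted solutions and leave the tables asserted in the statement. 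Once uniqueness is established the theorem follows, as the type of each singular fiber together with $\varphi$'s action on it is determined by Proposition \ref{fiberwise}.
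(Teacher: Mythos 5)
Your proposal follows essentially the same route as the paper: the paper's own proof of Theorem \ref{ellip4} is literally ``the same as the Jacobian case'', i.e.\ the counting argument of Theorem \ref{ellip3} using the three restrictions (no fixed $(-2)$-curves, the number of isolated fixed points from Proposition \ref{order3}, and the topological Euler number $24$), together with the list of admissible fiber types from Proposition \ref{fiberwise} and Remark \ref{abc}. Your identification of the admissible types ($II$, $IV$-(i), $IV$-(ii), $I_0^*$-(iii), $IV^*$-(ii), after excluding $I_0^*$-(i),(ii) by elliptic type and $II^*$, $IV^*$-(i) by the absence of fixed $(-2)$-curves) is exactly what the table reflects.

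One concrete slip: the isolated-fixed-point counts for Nos.\ 2, 4, 6 are $0$, $1$, $2$, not $1$, $1$, $2$. In No.\ 2 the fixed locus is $X^{\varphi}=C^{(4)}$ with \emph{no} isolated points, and this is precisely what forces every singular fiber in that case to be of type $II$ or $IV$-(i) (each contributing zero isolated points). With your count of $1$, imposed as an equality, the system becomes inconsistent once you add your Riemann--Hurwitz equation (for $g=4$ one needs ramification degree $12$, while a fiber of type $IV$-(ii), $I_0^*$-(iii) or $IV^*$-(ii) contributes $0$, $1$ or $2$ and eats Euler number $4$, $6$ or $8$, leaving only $10$, $9$ or $8$ available); without Riemann--Hurwitz it instead admits spurious solutions duplicating rows 4a--4c. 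So the slip is self-detecting under your scheme but must be corrected for the argument to produce the stated table. As a side remark, your Riemann--Hurwitz constraint is a genuine addition relative to the paper; on every correct configuration it turns out to coincide with the Euler-number equation (ramification index at an intermediate point equals the multiplicity of the fiber component it lies on), so it serves as a consistency check rather than new information, and the paper's three restrictions already suffice.
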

\begin{longtable}{cll}\toprule
No. & $\mathrm{Sing} (f)$ & condition  \\ \midrule
2 & $x\cdot II+ y\cdot IV$-(i) & $x+2y=12$  \\ \midrule
4a & $IV$-(ii) $+x\cdot II+ y\cdot IV$-(i) & $x+2y=10$  \\
4b & $I_0^*$-(iii) $+x\cdot II+ y\cdot IV$-(i) & $x+2y=9$  \\
4c & $IV^*$-(ii) $+x\cdot II+ y\cdot IV$-(i) & $x+2y=8$  \\ \midrule
6a & $2\cdot IV$-(ii) $+x\cdot II+ y\cdot IV$-(i) & $x+2y=8$  \\
6b & $I_0^*$-(iii)$+IV$-(ii) $+x\cdot II+ y\cdot IV$-(i) & $x+2y=7$  \\
6c & $2\cdot I_0^*$-(iii) $+x\cdot II+ y\cdot IV$-(i) & $x+2y=6$  \\
6d & $IV$-(ii) $+IV^*$-(ii) $+x\cdot II+ y\cdot IV$-(i) & $x+2y=6$  \\
6e & $IV^*$-(ii) $+I_0^*$-(iii) $+x\cdot II+ y\cdot IV$-(i) & $x+2y=5$  \\
6f & $2\cdot IV^*$-(ii) $+x\cdot II+ y\cdot IV$-(i) & $x+2y=4$  \\ \bottomrule
\end{longtable}
\begin{proof}
The proof is the same as Jacobian case, we use the restriction on fixed $(-2)$-curves,
isolated fixed points and the topological Euler number.
\end{proof}

\erase{Non-symplectic automorphisms of order 3 on $K3$ surfaces have been classified by 
Artebani, Sarti  \cite{AS} and Taki \cite{Taki}. 
They proved the following.
\begin{thm}[\cite{AS}, \cite{Taki}]\label{order3}
Let $r$ be the Picard number of $X$  
and let $s$ be the minimal number of generators of $S_{X}^{\ast }/S_{X}$.

$X$ has a non-symplectic automorphism $\varphi $ of order 3 which acts trivially on $S_{X}$
if and only if $22-r -2s\geq 0$. 
Moreover the fixed locus $X^{\varphi }:=\{x\in X| \varphi (x)=x \}$ has the form 
\begin{equation*}
X^{\varphi }=
\begin{cases}
\{ P_{1}, P_{2}, P_{3} \} & \hspace{-2.25cm} \text{if $S_{X}=U(3)\oplus E_{6}^{\ast }(3)$} \\
\{ P_{1}, \dots , P_{M} \} \amalg C^{(g)} \amalg E_{1} \amalg \dots \amalg E_{K} & \text{otherwise}
\end{cases}
\end{equation*}
and $M = r/2-1$, $g=(22-r -2s)/4$, $K=(2+r -2s)/4$, where 
we denote by $P_{i}$ an isolated point, $C^{(g)}$ a non-singular curve of genus $g$ and by $E_{j}$ a non-singular rational curve.
\end{thm}

\begin{longtable}{|c|c|}
\hline
$S_{X}$ & $X^{\varphi }$ \\
\hline
$U$ & $C^{(5)}\amalg \mathbb{P}^{1}$ \\
\hline
$U(3)$ & $C^{(4)}$ \\
\hline
$U\oplus A_{2}$ & $C^{(4)}\amalg \mathbb{P}^{1}\amalg \{pt\}$ \\
\hline
$U(3)\oplus A_{2}$ & $C^{(3)}\amalg \{pt\}$ \\
\hline
$U\oplus A_{2}^{\oplus 2}$ & $C^{(3)}\amalg \mathbb{P}^{1}\amalg \{pt\} \times 2$ \\
\hline
$U(3)\oplus A_{2}^{\oplus 2}$ & $C^{(2)}\amalg \{pt\} \times 2$ \\
\hline
$U\oplus E_{6}$ & $C^{(3)}\amalg \mathbb{P}^{1}\times 2 \amalg \{pt\} \times 3$ \\
\hline
$U\oplus A_{2}^{\oplus 3}$ & $C^{(2)}\amalg \mathbb{P}^{1}\amalg \{pt\} \times 3$ \\
\hline
$U(3)\oplus A_{2}^{\oplus 3}$ & $C^{(1)}\amalg \{pt\} \times 3$ \\
\hline
$U(3)\oplus E_{6}^{\ast }(3)$ & $\{pt\} \times 3$ \\
\hline
$U\oplus E_{8}$ & $C^{(3)}\amalg \mathbb{P}^{1}\times 3\amalg \{pt\} \times 4$ \\
\hline
$U\oplus E_{6}\oplus A_{2}$ & $C^{(2)}\amalg \mathbb{P}^{1}\times 2\amalg \{pt\} \times 4$ \\
\hline
$U\oplus A_{2}^{\oplus 4}$ & $C^{(1)}\amalg \mathbb{P}^{1}\amalg \{pt\} \times 4$ \\
\hline
$U(3)\oplus A_{2}^{\oplus 4}$ & $C^{(0)}\amalg \{pt\} \times 4$ \\
\hline
$U\oplus E_{8}\oplus A_{2}$ & $C^{(2)}\amalg \mathbb{P}^{1}\times 3\amalg \{pt\} \times 5$ \\
\hline
$U\oplus E_{6}\oplus A_{2}^{\oplus 2}$ & $C^{(1)}\amalg \mathbb{P}^{1}\times 2\amalg \{pt\} \times 5$ \\
\hline
$U\oplus A_{2}^{\oplus 5}$ & $C^{(0)}\amalg \mathbb{P}^{1}\amalg \{pt\} \times 5$ \\
\hline
$U\oplus E_{8}\oplus A_{2}^{\oplus 2}$ & $C^{(1)}\amalg \mathbb{P}^{1}\times 3\amalg \{pt\} \times 6$ \\
\hline
$U\oplus E_{6}\oplus A_{2}^{\oplus 3}$ & $C^{(0)}\amalg \mathbb{P}^{1}\times 2\amalg \{pt\} \times 6$ \\
\hline
$U\oplus E_{8}\oplus E_{6}$ & $C^{(1)}\amalg \mathbb{P}^{1}\times 4\amalg \{pt\} \times 7$ \\
\hline
$U\oplus E_{8}\oplus A_{2}^{\oplus 3}$ & $C^{(0)}\amalg \mathbb{P}^{1}\times 3\amalg \{pt\} \times 7$ \\
\hline
$U\oplus E_{8}^{\oplus 2}$ & $C^{(1)}\amalg \mathbb{P}^{1}\times 5\amalg \{pt\} \times 8$ \\
\hline
$U\oplus E_{8}\oplus E_{6}\oplus A_{2}$ & $C^{(0)}\amalg \mathbb{P}^{1}\times 4\amalg \{pt\} \times 8$ \\
\hline
$U\oplus E_{8}^{\oplus 2}\oplus A_{2}$ & $C^{(0)}\amalg \mathbb{P}^{1}\times 5\amalg \{pt\} \times 9$ \\
\hline
\caption[]{N\'{e}ron-Severi lattices and fixed loci}\label{SandF}
\end{longtable}

\begin{prop}
Let $F$ be a non-singular rational curve on $X$.
Assume that $F$ does not intersect with $C^{(g)}$ where $g\geq 2$. 
Then $F$ is a component of a singular fiber or a section fixed by $\varphi $.
\end{prop}
\begin{proof}

\end{proof}
}

\section{From $K3$ surfaces to log del Pezzo surfaces}\label{quot}

We use the results of last section to obtain the list of singularities of log del Pezzo
surface $Z$ of index three with the multiple smooth divisor property.
Our discussion depends on the following observation.
\begin{prop}\label{howto}
Let $X$ be a $K3$ surface and $\varphi$ a non-symplectic automorphism of finite order $n$
such that $X^{\varphi}$ contains a curve $C$ of genus $g\geq 2$. 
Then using the natural morphisms 
\begin{equation*}
\begin{split} X &\stackrel{\nu}{\rightarrow} X_0:=\mathrm{Proj} 
\oplus_{m\geq 0} H^0 (X, \mathcal{O}_X (mC)) \\
&\stackrel{\pi}{\rightarrow} Z:=\mathrm{Proj} 
\oplus_{m\geq 0} H^0 (X, \mathcal{O}_X (mC))^{\varphi}=X_0/\varphi, 
\end{split}
\end{equation*}
we get a log del Pezzo surface $Z$ whose index divides $n$.  
Moreover $Z$ satisfies the following condition:\\
$\star$ 
The linear system $|-nK_Z|$ contains a divisor of the form $(n-1)C_0$, where $C_0=\pi \nu (C)$ is a 
smooth curve which does not meet the singularities.
\end{prop}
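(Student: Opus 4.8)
The plan is to realize $Z$ as a quotient of a ``K3 with rational double points'' and to read off everything from the ramification of that quotient. First I would record that the fixed curve $C\subset X^{\varphi}$, being a smooth irreducible curve of genus $g\geq 2$, satisfies $(C^2)=2g-2>0$ by adjunction and is nef (an irreducible curve with nonnegative self-intersection). Hence $C$ is nef and big, and $\nu\colon X\to X_0$ is the birational contraction of exactly those curves $\Gamma$ with $(C,\Gamma)=0$; on a $K3$ surface these are configurations of $(-2)$-curves, so $X_0$ has only rational double points, $\mathcal{O}_{X_0}(1)$ is ample with $\nu^{*}\mathcal{O}_{X_0}(1)=\mathcal{O}_X(C)$, and $\nu$ is crepant so that $K_{X_0}\sim 0$. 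Since $|mC|$ is $\varphi$-invariant, the graded ring carries a $\varphi$-action and $\varphi$ descends to $X_0$; thus $Z=X_0/\varphi$ is a normal projective surface whose singularities are quotients of rational double points by stabilizers in $\langle\varphi\rangle$, hence again quotient singularities, hence log terminal by the classification recalled in Section~\ref{singularities}.

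The crucial geometric input I would then isolate is that \emph{distinct fixed curves of $\varphi$ are disjoint}: at a point of a one-dimensional fixed component the local action has the form $\mathrm{diag}(1,\zeta)$ (see \cite{Nikulin}, and \eqref{local} for $n=3$), so the eigenvalue-$1$ tangent direction is unique and the smooth fixed locus can contain only one curve branch there. Consequently every fixed rational curve $E$ has $(C,E)=0$ and is contracted by $\nu$, so the only fixed curve surviving on $X_0$ as a divisor is $C$ itself. Therefore $\pi\colon X_0\to Z$ is a cyclic cover of degree $n$ whose divisorial ramification is precisely $C$ and whose branch divisor is $C_0=\pi\nu(C)$. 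Because $\varphi|_C=\mathrm{id}_C$, the restriction $\pi|_C$ is an isomorphism onto $C_0$, so $C_0\cong C$ is smooth; and since $C$ avoids the contracted curves and the isolated fixed points, while the local action $\mathrm{diag}(1,\zeta)$ gives a smooth quotient along $C$, the curve $C_0$ meets no singular point of $Z$.

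Next I would run the Hurwitz formula for the cover $\pi$, totally ramified along $C$:
\[
K_{X_0}\sim \pi^{*}K_Z+(n-1)\,C,\qquad \pi^{*}C_0=n\,C .
\]
Using $K_{X_0}\sim 0$, these combine to $\pi^{*}(-nK_Z)\sim (n-1)\,\pi^{*}C_0$, so $-nK_Z$ and $(n-1)C_0$ have the same pullback; descending this (for instance by checking that the Cartier sheaf $\omega_Z^{[n]}((n-1)C_0)$ and its inverse both acquire $\varphi$-invariant sections from the trivialization of the pullback, or using that $\mathrm{Pic}(Z)$ is torsion free since $Z$ is rational) gives the linear equivalence $-nK_Z\sim (n-1)C_0$. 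As $\pi^{*}C_0=nC$ is ample on $X_0$, the divisor $C_0$ is ample on $Z$, whence $-K_Z\equiv\tfrac{n-1}{n}C_0$ is ample; combined with log terminality this shows $Z$ is log del Pezzo, and the displayed equivalence is exactly property $\star$, with $C_0$ smooth and disjoint from $\mathrm{Sing}(Z)$ as already established. For the index I would argue pointwise: each singularity is $\mathbb{C}^2/G$ with $G\subset GL(2,\mathbb{C})$ small, of index $[G:G\cap SL(2,\mathbb{C})]$, measured by the determinant character $\det\colon G\to\mathbb{C}^{*}$; the rational-double-point part of $G$ lies in $SL(2,\mathbb{C})$, while the generator coming from $\varphi$ has determinant equal to its action on $\omega_X$, a primitive $n$-th root of unity as $\varphi$ is non-symplectic of order $n$. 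Hence $\det(G)\subseteq\mu_n$, the index divides $n$ everywhere, $nK_Z$ is Cartier, and the index of $Z$ divides $n$.

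The step I expect to be the main obstacle is the descent of the pullback identity to an honest $\varphi$-equivariant linear equivalence $-nK_Z\sim (n-1)C_0$, since $\pi^{*}$ determines a class only up to $n$-torsion; the disjointness of distinct fixed curves, which forces the branch locus to be exactly $C_0$, is the key geometric lemma that makes the clean form of $\star$ possible and should be proved carefully first.
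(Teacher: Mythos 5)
Your construction follows the same skeleton as the paper's proof (contract the curves orthogonal to $C$, take the cyclic quotient, and read everything off the ramification formula $0\sim K_{X_0}=\pi^{*}K_Z+(n-1)\nu(C)$), but two steps are executed differently. For property $\star$ the paper never descends the pullback identity at all: it applies $\pi_{*}$ to the ramification formula, using that push-forward preserves linear equivalence, that $\pi_{*}\pi^{*}K_Z=nK_Z$, and that $\pi_{*}\nu(C)=C_0$ because the fixed curve maps birationally onto its image; this gives $nK_Z+(n-1)C_0\sim 0$ outright and sidesteps the torsion issue you rightly single out as the main obstacle. Your fallback ``$\mathrm{Pic}(Z)$ is torsion-free since $Z$ is rational'' can be made to work, but only for \emph{Cartier} classes (via the injection $\mathrm{Pic}(Z)\hookrightarrow\mathrm{Pic}(\widetilde{Z})$ into a resolution, which uses rationality of the singularities): the Weil divisor class group of such quotients genuinely can have torsion --- e.g.\ for $\mathbb{P}^2/C_3$, whose smooth locus has fundamental group $\mathbb{Z}/3$ --- so you must first know that $nK_Z$ is Cartier (i.e.\ run your index argument before the descent) and that $Z$ is rational, which should be deduced from ampleness of $-K_Z$, available already from the numerical form of the Hurwitz formula. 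For the Cartier property itself the paper descends the invariant form $\omega^{\otimes n}$ to a nowhere-vanishing section of $\mathcal{O}_Z(nK_Z)$ near each singular point; your determinant-character analysis of the local groups is a correct alternative with the same content.

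The one genuine gap is in your ramification statement. The Hurwitz formula you write requires that $\nu(C)$ be the \emph{only} divisor of $X_0$ with nontrivial inertia under the full cyclic group, but your disjointness lemma only excludes extra curves fixed by $\varphi$ itself. When $n$ is composite, a curve fixed pointwise by a proper power $\varphi^{k}$ but not by $\varphi$ would also enter the ramification divisor (with a smaller coefficient) and would invalidate the formula; this is exactly why the paper records that for \emph{every} $k$ the fixed locus $X^{\varphi^{k}}$ consists of $C$, $(-2)$-curves disjoint from $C$, and isolated points. The repair is the argument you already gave, applied to each power: since $C\subset X^{\varphi}\subset X^{\varphi^{k}}$ and the fixed locus of $\varphi^{k}$ is smooth, any other fixed curve of $\varphi^{k}$ is disjoint from $C$, hence has negative self-intersection by the Hodge index theorem, hence is a $(-2)$-curve contracted by $\nu$. (Relatedly, the primitivity of the eigenvalue of $\varphi$ on $\omega_X$, which your index argument invokes, does not follow from non-symplecticity of order $n$ alone; it follows from the existence of the pointwise fixed curve, since the local action along $C$ is $\mathrm{diag}(1,\zeta)$ with $\zeta$ equal to that eigenvalue, and $\zeta$ of order $<n$ would force $\varphi$ to have order $<n$.)
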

\begin{proof}
Since $C$ is nef and big, $\nu$ is a birational morphism
which contracts every $(-2)$-curve
on $X$ disjoint from $C$ (a very special case of Basepoint-free theorem, \cite[Theorem 3.3]{km}).
We note that for any $k$, the fixed point set $X^{\varphi^k}$ consists of $C$, $(-2)$-curves 
disjoint from $C$ and some isolated fixed points.
Therefore the induced action of $\varphi^k$ on $X_0$ has 
only fixed curve $\nu (C)$ and some isolated fixed points. 
\erase{More precisely, no curve with nontrivial stabilizer 
other than $\nu (C)$}
Also note that $\nu (C)$ is disjoint from singularities.
The ramification formula of $\pi$ is therefore
\begin{equation}\label{adj}
0\sim K_{X_0} = \pi^* K_Z + (n-1)\nu (C).
\end{equation}
Since $\nu (C)$ is ample and $\pi$ is a finite morphism, $-K_Z$ is ample.
\erase{Hartshorne III, Ex. 5.7}
Since $X_0$ has only quotient singularities, so does $Z$.
\erase{Ishii-nihongo thm 5.2.9}
Thus $Z$ is a log del Pezzo surface. 
Moreover, if $\omega$ is the nowhere vanishing 
holomorphic two form on $X$, then $\varphi$ acts trivially on
$\omega^{\otimes n}$. It descends to a nowhere vanishing section of $n K_Z$ over
$Z-\mathrm{Sing}(Z)-\pi \nu (C)$ and this is the 
local generator of $\mathcal{O}_Z (nK_Z)$ around singularities. Hence $nK_Z$ is Cartier.
The last condition follows from (\ref{adj})
by applying $\pi_*$. 
\erase{because $\mathrm{Pic} (Z)$ is torsion-free.\erase{My ref note on NMJ}}
\end{proof}
\begin{def}\label{MSDP}
Let $Z$ be a log del Pezzo surface of index $3$.
We say that $Z$ satisfies the {\em{multiple smooth divisor property}} if the linear system 
$|-3K_Z|$ contains a divisor $2C$ with $C$ a smooth curve that does not meet $\mathrm{Sing} (Z)$.
\end{def}
As the previous proposition implies, this is a natural necessary condition for $Z$ 
when we consider correspondence with $K3$ surfaces.
We will show in Section \ref{gyaku} that conversely for any log del Pezzo surface $Z$ of index 
three with this multiple smooth divisor property there exists $(X,\varphi )$
for which the construction of Proposition \ref{howto} leads to $Z$. 
We note that, 
the analogous condition for index $2$ corresponds to the smooth divisor theorem of \cite{AN}.

In the following we examine singularities of $Z$ which are
obtained from $(X, \varphi )$ via 
Proposition \ref{howto}. 
We use the elliptic fibrations obtained in Theorems \ref{ellip3}, \ref{ellip4}. 

\begin{lem}\label{sectio}
Let $f$ be the elliptic fibration of Theorem \ref{ellip3} and \ref{ellip4}.
Suppose there exists a $(-2)$-curve $E$ which is disjoint from $C=C^{(g)}$. 
Then one of the following holds.
\begin{enumerate}
\item The curve $E$ is in $X^{\varphi}$.
\item The curve $E$ is a fiber component of $f$.
\item The curves $E$, $\varphi (E)$ and $\varphi^2 (E)$ are mutually disjoint and 
they are sections of $f$. 
\end{enumerate}
\end{lem}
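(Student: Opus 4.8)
The plan is to measure $E$ against the fibration $f$ and then exploit the fact that the whole $\varphi$-orbit of $E$ lands in the negative-definite lattice $C^{\perp}$. First I would intersect $E$ with a general fibre $F$. Since $F$ is nef with $F^2=0$, we have $(E,F)\geq 0$, and if $(E,F)=0$ then the irreducible curve $E$ is contracted by $f$, i.e.\ it is a component of a fibre; this is case (2). So assume $d:=(E,F)>0$, meaning $E$ is horizontal.

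The key observation is that $C$ is nef and big, being an irreducible curve with $C^2=2g-2>0$, so by the Hodge index theorem its orthogonal complement $C^{\perp}\subset S_X$ is negative definite. As $E$ is disjoint from $C$ we have $E\in C^{\perp}$, and since $\varphi$ fixes $C$ the classes $\varphi E,\varphi^2 E$ and their sum $D:=E+\varphi E+\varphi^2 E$ also lie in $C^{\perp}$. Writing $a:=(E,\varphi E)=(E,\varphi^2 E)=(\varphi E,\varphi^2 E)$, which agree because $\varphi$ is an isometry of order three, one computes $D^2=3(-2)+6a=6(a-1)$; negative definiteness forces $D^2<0$, hence $a=0$. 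Thus whenever $E,\varphi E,\varphi^2 E$ are distinct they are mutually disjoint, which is precisely the disjointness asserted in case (3). The same lattice controls the degree: decomposing $F=\tfrac{(F,C)}{C^2}C+F_{\perp}$ with $F_{\perp}\in C^{\perp}\otimes\mathbb{Q}$ gives $d=(E,F_{\perp})$, and the Cauchy--Schwarz inequality in the negative-definite space $C^{\perp}\otimes\mathbb{Q}$ yields $d^2\leq(-E^2)(-F_{\perp}^2)=\tfrac{\delta^2}{g-1}$, where $\delta=(C,F)\in\{2,3\}$ is the degree of $C$ over the base ($2$ in the Jacobian cases, where there is also a fixed section, and $3$ in the non-Jacobian cases). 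This gives $d=1$ outright for every main invariant with $g\geq 3$ in the Jacobian case and $g\geq 4$ in the non-Jacobian case.

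Once $d=1$ is known, $E$ is a section and the trichotomy closes quickly using Proposition \ref{ellip1}(2): since $\varphi$ acts trivially on the base, if $E$ is $\varphi$-stable then its unique point $x_t$ on each fibre $F_t$ satisfies $\varphi(x_t)\in\varphi E\cap\varphi F_t=E\cap F_t=\{x_t\}$, so $E\subset X^{\varphi}$ and we are in case (1); if $E$ is not stable its orbit consists of three distinct sections, disjoint by the computation above, giving case (3). The main obstacle is the handful of residual main invariants of small genus, namely the Jacobian cases with $g=2$ (Nos.\ 8, 10, 11) and the non-Jacobian cases with $g\leq 3$ (Nos.\ 4, 6), where the Cauchy--Schwarz bound only gives $d\leq 2$ or $d\leq 3$ and a genuine multisection is not excluded by lattice theory alone. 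For these I would rule out $d\geq 2$ directly, using the explicit singular-fibre lists of Theorems \ref{ellip3} and \ref{ellip4} together with Proposition \ref{fiberwise}: a horizontal curve of degree $\geq 2$ disjoint from $C$ would, on a general fibre, miss the $\delta$ fixed points lying on $C$, so $\varphi$ would act on the remaining intersection points and thereby produce fixed points of $\varphi$ on $E$, necessarily isolated ones by the local model (\ref{local}) and the argument of Lemma \ref{p1}; comparing their number and location with the fixed-point data of Proposition \ref{order3} then yields the required contradiction.
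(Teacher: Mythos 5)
Your overall mechanism is the same as the paper's (Hodge index: $C^{\perp}$ is negative definite, which gives both the disjointness of the orbit via $D^2=6(a-1)<0$ and a degree bound via Cauchy--Schwarz), and those computations are correct. But there is a genuine gap in your handling of the residual small-genus cases (Nos.~4, 6, 8, 10, 11). Your direct argument for excluding $d=(E,F)\geq 2$ there presupposes $\varphi(E)=E$: only then does $\varphi$ act on the finite set $E\cap F_t$. If $\varphi(E)\neq E$, then $\varphi$ carries $E\cap F_t$ into $\varphi(E)\cap F_t$, so no fixed points of $\varphi$ are ``produced on $E$'' at all, and nothing in your proposal rules out, say in No.~8, a $\varphi$-orbit of three mutually disjoint rational $2$-sections disjoint from $C$. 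Worse, your sentence asserting that ``a genuine multisection is not excluded by lattice theory alone'' is precisely where the proof breaks: that configuration \emph{is} excluded by lattice theory, just not by the inequality you wrote, which bounds $E$ individually.

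The missing step --- and it is exactly the step the paper takes --- is to run the same Cauchy--Schwarz/Hodge-index bound on the orbit sum $D=E+\varphi(E)+\varphi^2(E)$ instead of on $E$. By the disjointness you already proved, $D^2=-6$, while $(D,C)=0$ and $(D,F_{\perp})=(D,F)=3d$, so
\[
9d^2\;\leq\;(-D^2)(-F_{\perp}^2)\;=\;6\cdot\frac{\delta^2}{2g-2},
\qquad\text{i.e.}\qquad 3(2g-2)\,d^2\;\leq\;2\delta^2,
\]
a bound three times stronger than yours, which forces $d=1$ in \emph{all} eleven cases (e.g.\ No.~6: $6d^2\leq 18$; Nos.~8, 10, 11: $6d^2\leq 8$). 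Hence every non-stable orbit automatically consists of disjoint sections, and the residual analysis is needed only for $\varphi$-stable $E$, where your orbit-counting does apply: for general $t$ the $d$ points of $E\cap F_t$ lie in size-three orbits (otherwise $E$ would contain infinitely many fixed points and lie in $X^{\varphi}$), so $3\mid d$; this kills $d=2$ outright and leaves only $d=3$ in No.~6, which is then excluded, as you indicate, because both Lefschetz fixed points of $\varphi|_E$ would have to be isolated (the only fixed curve is $C$, disjoint from $E$), contradicting Lemma \ref{p1}. With that repair your argument closes and coincides with the paper's proof.
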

\begin{proof}
Let us assume that 
$E\not\subset X^{\varphi}$ and $(E, F)\geq 1$, where $F$ is the general fiber.  
We note that $(C,F)=2$ or $3$ respectively for Theorems \ref{ellip3} or \ref{ellip4}, 
by Proposition \ref{ellip1}.

We first consider the case $\varphi (E) =E$. 
Since $F$ is general, $F\cap E$ is not the fixed point of $E$, hence $(F,E)\geq 3$. 
Let us derive a contradiction using the Hodge index theorem. 
The divisor $\displaystyle C+bE-\frac{(C^2)}{(C,F)}F$ is 
orthogonal to $C$ for any $b\in \mathbb{R}$, 
hence we should have the self-intersection
\[ \left( \left(C+bE-\frac{(C^{2})}{(C,F)}F \right)^{2} \right) \leq 0.\] 
This function on $b$ takes the maximum at $b=-(C^2)(E,F)/2(C,F)$ and we deduce 
\begin{equation*}
\begin{split}
0 &\geq  \left( \left( C-\frac{(C^2)(E,F)}{2(C,F)}E-\frac{(C^2)}{(C,F)}F \right)^{2} \right) \\
   &= (C^2)\left(\frac{(C^2)(E,F)^2}{2(C,F)^2}-1 \right). \\
\end{split}
\end{equation*}
This is possible only if $(C^2)=2, (E,F)=3$ and $(C,F)=3$, namely in the case No. 6.
In this case the two fixed points of $\varphi |_E$ are both isolated since $(C,E)=0$. 
But this is a contradiction to Lemma \ref{p1}.

Next we consider the case $\varphi (E) \neq E$. Again by the Hodge index theorem,
any divisor $aE+ b\varphi (E)+ c\varphi^2 (E)$ has a non-positive 
(and negative if it is effective) self-intersection number. 
From this we see that $E, \varphi (E)$ and $\varphi^2 (E)$ are disjoint.
We put $D=E+\varphi (E)+\varphi^2 (E)$. Then $(D,F)\geq 3$ and this number is divisible by $3$. 
As in the previous case, 
the function $\left( \left( \displaystyle C+bD-\frac{(C^2)}{(C,F)}F\right)^2 \right)$
on $b\in \mathbb{R}$ takes non-positive value, 
and its maximum is 
\begin{equation*}
\begin{split}
0 &\geq \left( \left( C-\frac{(C^2)(D,F)}{6(C,F)}D-\frac{(C^2)}{(C,F)}F \right)^{2} \right)\\
  &= (C^2) \left(\frac{(C^2)(D,F)^2}{6(C,F)^2} -1\right) .
\end{split}
\end{equation*}
This inequality holds only in one of the following cases:\\
$\bullet$ We have $(C,F)=2, (C^2)=2$ and $(D,F)=3$ (No. 8, 10, 11), or\\
$\bullet$ We have $(C,F)=3, (C^2)=2,4,6$ and $(D,F)=3$ (No. 2, 4, 6).

In any case $E$ is a section of $f$. 
\end{proof}
\begin{lem}\label{nocurve1}
We use the same notation and assumptions as in Lemma \ref{sectio} (3). 
In cases of Jacobian type, the curve $E$ does not exist.
\end{lem}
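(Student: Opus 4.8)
The plan is to turn the disjointness hypotheses of Lemma \ref{sectio}(3) into a numerical impossibility, using the fixed section available in the Jacobian case together with the Mordell--Weil relation forced by $\varphi$. First I would recall from the proof of Lemma \ref{sectio} that the alternative $\varphi(E)\neq E$ can occur in Jacobian type only when $(C^2)=2$ and $(C,F)=2$, i.e. precisely in cases No.~$8,10,11$; in each of these Theorem \ref{ellip3} furnishes a $\varphi$-fixed section, which I denote $O$. By Proposition \ref{ellip1} every smooth fiber is the elliptic curve of period $\zeta_3$ and $\varphi$ restricts to the order-three automorphism fixing the origin $O\cap F$, that is, multiplication by $\zeta_3$. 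Hence the three points $E\cap F,\ \varphi(E)\cap F,\ \varphi^2(E)\cap F$ are $P,\zeta_3P,\zeta_3^2P$ for some $P$, and they sum to $(1+\zeta_3+\zeta_3^2)P=0$ in the group law. Consequently $D:=E+\varphi(E)+\varphi^2(E)$ satisfies $D\sim 3O+V$ for a $\varphi$-invariant vertical divisor $V$ supported on the fibers.

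The next step is to compute $(V^2)$. Writing $a:=(E,O)\ge 0$ and using that $E,\varphi(E),\varphi^2(E)$ are mutually disjoint $(-2)$-sections (Lemma \ref{sectio}(3)), one gets $(D^2)=-6$ and $(D,O)=3a$. From $D\sim 3O+V$ and $(O^2)=-2$ we read off $(D,O)=3(O^2)+(O,V)$, hence $(O,V)=3a+6$, and then
\[ (V^2)=(D^2)-9(O^2)-6(O,V)=-6+18-6(3a+6)=-24-18a. \]
In particular $-(V^2)=24+18a\ge 24$, so I only need to bound $-(V^2)$ from above to reach a contradiction.

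The heart of the argument, and the step I expect to be the main obstacle, is the fiber-local bound. I would decompose $V=\sum_v V_v$ over the reducible fibers, each $(V_v^2)\le 0$. The crucial input is Proposition \ref{fiberwise}: in the Jacobian fibers the multiplicity-one components are either $\varphi$-fixed or permuted in a single $3$-cycle, so the free orbit $\{E,\varphi(E),\varphi^2(E)\}$ meets each reducible fiber in a $\varphi$-invariant pattern. This restricts the possibilities to ``all three sections on one fixed component'' (contribution $0$) or ``one section on each member of a permuted triple''; computing the induced vertical correction then yields $(V_v^2)\in\{0,-6\}$ for fibers of type $IV$-(ii) and $I_0^*$-(iii), $(V_v^2)\in\{0,-12\}$ for $IV^*$-(i), and $(V_v^2)=0$ for $II$ and $II^*$ (the $E_8$ fiber having trivial component group). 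Reading off the singular-fiber lists of Theorem \ref{ellip3} for No.~$8,10,11$, the total never exceeds $18$ in absolute value, so $(V^2)\ge -18$. This contradicts $(V^2)=-24-18a\le -24$, and therefore no such $E$ exists. The delicate point is precisely the per-fiber computation and the verification that the $\varphi$-invariant hitting patterns can contribute at most $12$ per fiber, so that the available reducible fibers can never accumulate the required value $24$.
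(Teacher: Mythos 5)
Your proof is correct, and while it ends in the same numerical contradiction as the paper's, it justifies the key decomposition by a genuinely different mechanism. The paper works entirely inside the invariant lattice: it exhibits an explicit orthogonal basis of $S_X^{\varphi}$ built from the fixed section $S$, the fiber class $F$ and the reducible-fiber components ($U_0=\langle F,S\rangle$ plus one $A_2$ per reducible fiber, and so on), takes for granted that these classes generate all of $S_X^{\varphi}$ (a rank-and-discriminant check), and then computes the orthogonal components of $D=E+\varphi(E)+\varphi^2(E)$ from its intersection numbers: $(D,F)=3$ and $(D,S)=0$ give $D_{U_0}=3S+6F$ of square $18$, each fiber summand contributes $0$, $-6$ or $-12$, whence $(D^2)\geq 0$, contradicting $(D^2)=-6$. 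You obtain the very same decomposition, $D\sim 3O+V$ with $V$ vertical, instead from the group law on the generic fiber, using that $\varphi$ restricts to complex multiplication by $\zeta_3$ fixing $O\cap F$ and that $1+\zeta_3+\zeta_3^2=0$; after that the two arguments coincide, since your per-fiber values ($0$ or $-6$ for $IV$-(ii) and $I_0^*$-(iii), $0$ or $-12$ for $IV^*$-(i), $0$ for $II$ and $II^*$) are exactly the squares of the paper's correction classes $-2l_1-l_2$ and $-3m-2l^+$ and of the analogous $E_6$-correction, and your inequality $24+18a=-(V^2)\leq 18$ is the paper's inequality read in the other direction. What your route buys: you never need to know $S_X^{\varphi}$ explicitly, nor verify that the listed classes form a basis, which the paper does need for the orthogonal projection to be legitimate. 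What the paper's route buys: being purely lattice-theoretic, it is the template that still works verbatim in the non-Jacobian cases of Lemma \ref{nocurve2}, where there is no section and your Mordell--Weil step has no starting point. Two minor points: your phrase ``all three sections on one fixed component (contribution $0$)'' is loose, because a $\varphi$-preserved component need not be the identity component, in which case it contributes $-6$ or $-12$ (your displayed bounds are nevertheless the correct ones); and in fact $a=(E,O)=0$ automatically, since $E\cap\varphi(E)=\emptyset$ forces $E\cap X^{\varphi}=\emptyset$ while $O\subset X^{\varphi}$, which is exactly how the paper gets its relation $(D,S)=0$.
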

\begin{proof}
Our tool is an explicit 
basis of $S_X^{\varphi}$ obtained from fiber components, c.f. (\ref{basis})
in the proof of Proposition \ref{ellip2}. 
Let $S$ be the fixed $(-2)$-curve which is a section of $f$. 
Then $U_0=\bracket{F, S}$ gives a sublattice isomorphic to $U$. 

In case of $IV$-(ii) fiber, the two components $l_1, l_2$ which are disjoint from $S$
constitute a sublattice $A_{IV}$ of $S_X^{\varphi}$ isomorphic to $A_2$ and orthogonal to 
$U_0$. 
In case of $I_0^*$-(iii) fiber, we put three simple components disjoint from 
$S$ as $l, \varphi (l), \varphi^2 (l)$ and the multiple component as $m$. Then 
$m$ and $l^+ :=l+\varphi (l)+\varphi^2 (l)$ constitute a sublattice $A_{I_0^*}$
of $S_X^{\varphi}$ 
isomorphic to $A_2$ and orthogonal to $U_0$. 
Similarly in cases of $IV^*$-(i) and $II^*$, the choice of basis is clear.

Let us give a proof in detail for No. 8. The other cases are similar.
Note that in No. 8 there are three singular fibers of type $IV$-(ii) or $I_0^*$-(iii)
which corresponds to three components $A_2$ in $S_X^{\varphi}$. 
Using the basis above for each $A_2$, we obtain the 
explicit basis for $S_X^{\varphi}$ as 
\[S_X^{\varphi}=U_0\oplus A_{IV} \oplus A_{I_0^*} \oplus \cdots,\]
where the right-hand-side should be replaced suitably.

Suppose that there exists $E$ as in Lemma \ref{sectio} (3).
Then $D=E+ \varphi (E)+ \varphi^2 (E)\in S_X^{\varphi}$. This divisor $D$ satisfies the following 
intersection relations. 
\begin{equation}\label{11}
(D, F)=3,\  (D, S)=0.
\end{equation}
\begin{equation}\label{22}
\begin{cases}
(D, l_1)=(D, l_2)=0 & \text{if $E$ meets zero component of $IV$-(ii)}\\
(D, l_1)=3,\ (D, l_2)=0 & \text{if $E$ meets $l_1$}\\
(D, l_1)=0,\ (D, l_2)=3 & \text{if $E$ meets $l_2$}\\
\end{cases}
\end{equation}
\begin{equation}\label{33}
\begin{cases}
(D, m)=(D, l^+)=0 & \text{if $E$ meets zero component of $I_0^*$-(iii)}\\
(D, m)=0,\ (D, l^+)=3 & \text{if $E$ meets $l^+$}\\
\end{cases}
\end{equation}
Since $U_0\oplus A_{IV} \oplus A_{I_0^*} \oplus \cdots$ is an 
orthogonal direct sum, $D$ has an expression $D= D_{U_0}+ D_{A_{IV}}+ D_{A_{I_0^*}}+\cdots$
and each $D_{U_0}$ etc. can be computed 
separately from the relations above. In fact
the first relation (\ref{11}) shows that 
the $D_{U_0}=3S+6F$. The second relation (\ref{22}) 
shows that $D_{A_{IV}}=0$ or $-2l_1-l_2$ or $-2l_2-l_1$. The third relation (\ref{33}) 
gives that $D_{A_{I_0^*}}= 0 $ or $-3m-2l^+$. 
Now we compute $(-2l_1-l_2)^2=(-2l_2-l_1)^2=-6$, $(-3m-2l^+)^2=-6$. 
Then we see 
\begin{equation*}
\begin{split}
(D^2)&= (D_{U_0}^2) + (D_{A_{IV}}^2)+ (D_{A_{I_0^*}}^2)+\cdots \\
&= 18 +(0 \text{ or } -6)+(0 \text{ or } -6)+\cdots \\
&\geq 18-6-6-6 \\
&=0 \\
\end{split}
\end{equation*}
since there are only three $A_2$ components. But actually $D$ consists of 
three disjoint $(-2)$-curves,
hence $(D^2)=-6$. Thus we obtain a contradiction.
(In the same way, in case No. 10 we obtain $(D^2)\geq 18-12-6 = 0$
and in case No. 11 we obtain $(D^2)\geq 18 - 0-6=12$.)
\end{proof}
In non-Jacobian cases disjoint sections can exist. To proceed, it suffices to classify the 
linear equivalence class of $D=E+\varphi (E)+\varphi^2 (E)$ by the following 
obvious lemma.
\begin{lem}
Assume that the linear system of a divisor $D_1\in S_X$ contains 
an effective divisor $E_1$ which is a disjoint union of (negative definite) 
$ADE$ configurations. Then $E_1$ is the only divisor in $|D_1|$, 
namely $H^0(\mathcal{O}_X (D_1))=1$.
\end{lem}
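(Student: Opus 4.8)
The plan is to deduce this from the standard \emph{negativity} (rigidity) principle for effective divisors supported on a negative-definite configuration of curves. The only property of the $K3$ surface $X$ that I will actually use is that $H^0(X,\mathcal{O}_X)=\mathbb{C}$, so that the sole effective divisor linearly equivalent to $0$ is $0$ itself. The structural input from the hypothesis is this: writing $E_1=\sum_i a_i C_i$ with $a_i>0$, the components $C_i$ are the $(-2)$-curves of the given $ADE$ diagrams, and by assumption their intersection matrix $(C_i\cdot C_j)$ is a block sum of negative Cartan matrices, hence negative definite. Consequently every nonzero effective divisor supported on the $C_i$ is a nonzero vector of this negative-definite lattice and therefore has strictly negative self-intersection.

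Next I would argue by contradiction, assuming that $|D_1|$ contains some effective divisor $E_1'\neq E_1$. I take $E''$ to be the componentwise minimum of $E_1$ and $E_1'$ and set $A=E_1-E''$ and $B=E_1'-E''$. Both are effective, they share no common irreducible component, and $A\sim B$ since $E_1\sim E_1'$. The point is that $A$ is supported on the curves $C_i$, so it lies in the negative-definite lattice above; on the other hand $A$ and $B$ have no common component, whence $(A\cdot B)\geq 0$, while linear equivalence forces $(A\cdot B)=(A^2)$. If $A\neq 0$ these combine to give $0\leq (A^2)<0$, which is absurd, so $A=0$. Then $E_1=E''\leq E_1'$, and $G:=E_1'-E_1$ is effective with $G\sim 0$; the triviality of $H^0(X,\mathcal{O}_X)$ beyond constants forces $G=0$, i.e. $E_1'=E_1$, contradicting $E_1'\neq E_1$. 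Hence $|D_1|=\{E_1\}$ and $h^0(\mathcal{O}_X(D_1))=1$.

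This is the negativity lemma in disguise, so I do not expect a genuine obstacle. The only steps demanding care are the passage to $A$ and $B$ sharing no common component (which is exactly what legitimizes $(A\cdot B)\geq 0$) and the degenerate case $A=0$, disposed of by the triviality of effective divisors linearly equivalent to zero on $X$.
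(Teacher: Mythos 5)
Your proof is correct. It is, however, a genuinely different argument from the one in the paper. The paper disposes of the lemma in one line: an $ADE$ configuration can be contracted to a normal (Du Val) singularity, so a section of $\mathcal{O}_X(E_1)$ descends to a rational function on the contracted normal surface that is regular outside finitely many points, hence regular everywhere, hence constant; this forces $h^0(\mathcal{O}_X(D_1))=1$. You instead run the standard negativity argument entirely inside intersection theory: split off the common part $E''$, so that $A=E_1-E''$ and $B=E_1'-E''$ are effective without common components, note $(A\cdot B)\geq 0$, use $A\sim B$ to get $(A^2)=(A\cdot B)\geq 0$, and contradict negative definiteness unless $A=0$; the leftover $G=E_1'-E_1\sim 0$ then vanishes because a nonzero effective divisor on a projective surface is never linearly trivial. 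Both routes are sound. Yours is more self-contained and in fact proves a slightly stronger statement: it needs only that the support of $E_1$ has negative-definite intersection matrix, not that the configuration is of $ADE$ type (nor any contractibility input, which in the paper's approach rests on Artin's or Grauert's contraction theorems). The paper's argument buys brevity, at the cost of invoking the existence of the contraction as a black box.
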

\begin{proof}
This is because $ADE$ configurations can be contracted to normal singularities.
\end{proof}
\begin{lem}\label{nocurve2}
We use the same notation and assumptions as in Lemma \ref{sectio} (3).
In non-Jacobian cases, we have the following possibilities for $D=E+\varphi (E)+ \varphi^2 (E)$. 
The notation of divisors will be explained in the proof.
For each class of $D\in S_X^{\varphi}$, $E$ is unique (up to $\varphi$) if exists.
\end{lem}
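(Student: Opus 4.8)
The plan is to mimic the lattice-theoretic bookkeeping of Lemma \ref{nocurve1}, but now exploiting the fact that in the non-Jacobian cases No.\ 2, 4, 6 the fixed lattice $S_X^{\varphi}$ is one of the small lattices $U(3)$, $U(3)\oplus A_2$, $U(3)\oplus A_2^{\oplus 2}$ (Proposition \ref{order3}), so the computation is essentially finite-dimensional. The three governing invariants of a hypothetical $D=E+\varphi(E)+\varphi^2(E)$ are $(D,F)=3$, $(D^2)=-6$ and $(D,C)=0$: the first because each of $E,\varphi(E),\varphi^2(E)$ is a section (Lemma \ref{sectio}(3)) meeting the general fiber once, the second because $D$ is a disjoint union of three $(-2)$-curves, and the third because $E$ is disjoint from $C=C^{(g)}$ while $\varphi(C)=C$. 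Since $D$ is $\varphi$-invariant it lies in $S_X^{\varphi}$, so the problem reduces to enumerating the classes of $S_X^{\varphi}$ satisfying these three relations.

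First I would note that the two linear conditions $(D,F)=3$ and $(D,C)=0$ cut out a coset of $N:=\langle F,C\rangle^{\perp}\subset S_X^{\varphi}$. Because $(F^2)=0$, $(C^2)=2g-2$ and $(C,F)=3$, the rank-two lattice $\langle F,C\rangle$ has Gram determinant $-9$ and signature $(1,1)$, so $N$ is negative definite of rank two less than that of $S_X^{\varphi}$. Decomposing $D=D_1+D_2$ into its rational projections onto $\langle F,C\rangle_{\mathbb{Q}}$ and $N_{\mathbb{Q}}$, the linear conditions force $D_1=C-\frac{2g-2}{3}F$, whence $(D_1^2)=2-2g$ is a constant; the condition $(D^2)=-6$ then becomes $(D_2^2)=2g-8$, a bounded equation on the negative definite lattice $N$. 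This makes the enumeration finite and explicit.

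Carrying this out case by case: in No.\ 2 one has $g=4$, so $D_1=C-2F$ is already integral, $N=0$, and $D=C-2F$ is the unique solution. In No.\ 4 ($g=3$) and No.\ 6 ($g=2$) the vector $D_1$ is fractional, $N\neq0$, and $(D_2^2)=-2$ respectively $-4$; here I would enumerate the finitely many admissible $D_2$ in the relevant coset of $N$ using the explicit $A_2$-bases attached to the reducible fibers of type $IV$-(ii), $I_0^*$-(iii) and $IV^*$-(ii) exactly as in (\ref{basis}) and Lemma \ref{nocurve1}. To pass from these abstract classes to the divisor expressions in the table, I would read off the intersection numbers $(D,l_i)$ of $E$ against the fiber components from Proposition \ref{fiberwise}, using that a section meets precisely one (multiplicity-one) component of each reducible fiber; this is the analogue of relations (\ref{22}) and (\ref{33}). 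I expect this enumeration, together with checking which numerically admissible classes are genuinely represented by a triple of disjoint sections, to be the main labor of the proof.

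Finally, the uniqueness assertion is the easy step. For each admissible class, $D=E+\varphi(E)+\varphi^2(E)$ is effective with support a disjoint union of three $(-2)$-curves, i.e.\ an $A_1^{\oplus3}$ configuration, so the obvious lemma stated just above gives $H^0(\mathcal{O}_X(D))=1$. Hence the effective member of $|D|$ is unique, the unordered triple $\{E,\varphi(E),\varphi^2(E)\}$ is determined by the class of $D$, and $E$ is unique up to the action of $\varphi$, as claimed.
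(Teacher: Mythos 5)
Your proposal is correct, and at bottom it is the same lattice bookkeeping as the paper's proof: both rest on Lemma \ref{sectio} (3), on explicit bases of $S_X^{\varphi}$ built from $C$, $F$ and the fiber components as in (\ref{basis}), on the fact that a section meets each reducible fiber in exactly one multiplicity-one component, and on the $h^0=1$ lemma for the uniqueness of $E$. The genuine difference is in how the finite enumeration is organized. The paper parametrizes the possibilities geometrically first --- by which simple component of each reducible fiber the section $E$ meets --- and then, for each configuration, the intersection numbers of $D$ against a full basis of $S_X^{\varphi}$ determine $D$ by linear algebra alone; the quadratic condition $(D^2)=-6$ is never invoked in Lemma \ref{nocurve2} (it is used only in Lemma \ref{nocurve1}, to reach a contradiction). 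You instead impose only the linear conditions $(D,F)=3$ and $(D,C)=0$, split $D$ orthogonally along $\langle F,C\rangle$ (Gram determinant $-9$, correctly computed), and use the resulting norm bound $(D_2^2)=2g-8$ on the negative-definite complement to make the enumeration finite, matching classes with configurations afterwards. This works, but be aware that your ordering of steps produces spurious classes which the paper's never meets: for instance, in case 4b the integral solutions of your three numerical conditions are $C'-2F$, $C'-F-3m-2L$, and also $C'-F-3m-L$; this last class satisfies $(D,m)=3$ and $(D,L)=-3$, and must be discarded because a sum of three sections meets every fiber component non-negatively and meets no multiple component at all. So the filtering step you describe as ``the main labor'' is not a formality but a logically necessary part of the argument; once it is carried out (and uniqueness handled by the ADE-contraction lemma exactly as in the paper), your route delivers the same table.
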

\begin{longtable}{cll} \toprule
No. & $D\in S_X^{\varphi}$ & intersection relations \\ \midrule
2 & $C-2F$ & \\ \midrule
4a & $C'-2F$ & $E$ meets $l_1$ \\ 
   & $C'-F-2l_2-l_3$ & $E$ meets $l_2$ \\
   & $C'-F-l_2-2l_3$ & $E$ meets $l_3$ \\
4b & $C'-2F$ & $E$ meets $n$ \\
   & $C'-F-3m-2L$ & $E$ meets $L$ \\
4c & $C'-2F$ & $E$ meets $N$ \\ \midrule
6a & $C'-2F$ & $E$ meets $l_1^1$ and $l_1^2$ \\
   & $C'-F-2l_2^2-l_3^2$ & $E$ meets $l_1^1$ and $l_2^2$ \\
   & $C'-F-l_2^2-2l_3^2$ & $E$ meets $l_1^1$ and $l_3^2$ \\
   & $C'-F-2l_2^1-l_3^1$ & $E$ meets $l_2^1$ and $l_1^2$ \\
   & $C'-2l_2^1-l_3^1-2l_2^2-l_3^2$ & $E$ meets $l_2^1$ and $l_2^2$ \\
   & $C'-2l_2^1-l_3^1-l_2^2-2l_3^2$ & $E$ meets $l_2^1$ and $l_3^2$ \\
   & $C'-F-l_2^1-2l_3^1$ & $E$ meets $l_3^1$ and $l_1^2$ \\
   & $C'-l_2^1-2l_3^1-2l_2^2-l_3^2$ & $E$ meets $l_3^1$ and $l_2^2$ \\
   & $C'-l_2^1-2l_3^1-l_2^2-2l_3^2$ & $E$ meets $l_3^1$ and $l_3^2$ \\
6b & $C'-2F$ & $E$ meets $n^1$ and $l_1^2$ \\
   & $C'-F-2l_2^2-l_3^2$ & $E$ meets $n^1$ and $l_2^2$ \\
   & $C'-F-l_2^2-2l_3^2$ & $E$ meets $n^1$ and $l_3^2$ \\
   & $C'-F-3m^1-2L^1$ & $E$ meets $L^1$ and $l_1^2$ \\
   & $C'-3m^1-2L^1-2l_2^2-l_3^2$ & $E$ meets $L^1$ and $l_2^2$ \\
   & $C'-3m^1-2L^1-l_2^2-2l_3^2$ & $E$ meets $L^1$ and $l_3^2$ \\
6c & $C'-2F$ & $E$ meets $n^1$ and $n^2$ \\
   & $C'-F-3m^2-2L^2$ & $E$ meets $n^1$ and $L^2$ \\
   & $C'-F-3m^1-2L^1$ & $E$ meets $L^1$ and $n^2$ \\
   & $C'-3m^1-2L^1-3m^2-2L^2$ & $E$ meets $L^1$ and $L^2$ \\
6d & $C'-2F$ & $E$ meets $l_1^1$ and $N^2$ \\
   & $C'-F-2l_2^1-l_3^1$ & $E$ meets $l_2^1$ and $N^2$ \\
   & $C'-F-l_2^1-2l_3^1$ & $E$ meets $l_3^1$ and $N^2$ \\
6e & $C'-2F$ & $E$ meets $N^1$ and $n^2$ \\
   & $C'-F-3m^2-2L^2$ & $E$ meets $N^1$ and $L^2$ \\
6f & $C'-2F$ & $E$ meets $N^1$ and $N^2$ \\ \bottomrule
\end{longtable}
\begin{proof}
The idea is the same as Lemma \ref{nocurve1}. 
The point is the construction of explicit basis for $S_X^{\varphi}$.

No. 2: In this case obviously $S_X^{\varphi}=\bracket{C,F}$. Assume that $E$ exists.
Then $D=E+ \varphi (E)+ \varphi^2 (E)\in S_X^{\varphi}$ satisfies $(D,C)=0, (D,F)=3$. 
Hence $D\sim C-2F$. \erase{In this case $(D^2)=-6$ and there is no contradiction. 
In fact $E$ can occur from the patching of $S_X^{\varphi}$ and its orthogonal complement
in $S_X$: $E\sim (C-2F)/3+\cdots$.} The uniqueness of $E$ follows from the previous lemma.

No. 4a: Let us put the three components of $IV$-(ii) fiber as $l_1, l_2, l_3$. 
Then
\[\bracket{C':=C+l_2+l_3, F}\oplus \bracket{l_2, l_3} \simeq U(3)\oplus A_2\]
is a basis of $S_X^{\varphi}$. When 
$E$ meets $l_1$, we have relations 
\[ (D,C)=0, (D,F)=3, (D, l_2)=(D, l_3)=0.\]
Hence $D\sim C'-2F$. 
Next when $E$ meets $l_2$, the relation becomes 
\[ (D,C)=0, (D,F)=3, (D, l_2)=3, (D, l_3)=0.\]
Then $D\sim C'-F-2l_2-l_3$.
Similarly when $E$ meets $l_3$, $D\sim C'-F-l_2-2l_3$. 

No. 4b: This has a $I_0^*$-(iii) fiber. We denote by $n$ the simple component 
preserved by $\varphi$, by $l, \varphi (l), \varphi^2 (l)$ the other simple components and 
by $m$ the double component. Then 
\[\bracket{C'=C+2m+L, F}\oplus \bracket{m, L} \simeq U(3)\oplus A_2\]
is a basis, where $L=l+\varphi (l)+\varphi^2 (l)$.
When $E$ meets $n$, the relation is 
\[ (D,C)=0, (D,F)=3, (D, m)=0, (D, L)=0.\]
Thus $D\sim C'-2F$. 
When $E$ meets $L$, the relation becomes
\[ (D,C)=0, (D,F)=3, (D, m)=0, (D, L)=3.\]
Then $D\sim C'-F-3m-2L$.
%We note that the two conditions on existence of $D$ is independent, 
%since $(C'-2F)/3$ and $(C'-F-3m-2L)/3$ are linearly independent in the 
%discriminant group $A_{S_X^{\varphi}}$. 

No. 4c: We denote by $m$ the triple component of $IV^*$-(ii) and by $l, \varphi (l), \varphi^2 (l)$
three double components and by $n, \varphi (n), \varphi^2 (n)$ three simple components.
We put $L=l+\varphi (l)+\varphi^2 (l), N=n+\varphi (n)+\varphi^2 (n)$. 
Then 
\[\bracket{C'=C+2m+L, F}\oplus \bracket{m,L} \simeq U(3)\oplus A_2\]
is the basis of $S_X^{\varphi}$. 
The curve $E$ can possibly meet only $N$ and we have then 
\[ (D,C)=0, (D,F)=3, (D,m)=0, (D,L)=0.\]
Thus $D\sim C'-2F$.

For cases of No. 6, we avoid describing the computations in detail. 
The notation is the same as No. 4 for fiber components and we use upper indices 
$l_1^{1}, l_1^{2}, \cdots$ to
distinguish two reducible fibers corresponding to $A_{2}^{\oplus 2}$.
We can choose the basis as 
\begin{longtable}{cl}\toprule
No. & the basis of $S_X^{\varphi}$ \\ \midrule
6a & $\bracket{C'=C+l_2^1+l_3^1+l_2^2+l_3^2, F} \oplus \bracket{l_2^1, l_3^1} \oplus \bracket{l_2^2, l_3^2}$ \\
6b & $\bracket{C'=C+2m^1+L^1+l_2^2+l_3^2, F} \oplus \bracket{m^1,L^1} \oplus \bracket{l_2^2, l_3^2}$ \\
6c & $\bracket{C'=C+2m^1+L^1+2m^2+L^2,F} \oplus \bracket{m^1,L^1} \oplus \bracket{m^2,L^2}$ \\
6d & $\bracket{C'=C+l_2^1+l_3^1+2m^2+L^2,F} \oplus \bracket{l_2^1, l_3^1} \oplus \bracket{m^2,L^2}$ \\
6e & $\bracket{C'=C+2m^1+L^1+2m^2+L^2,F} \oplus \bracket{m^1,L^1} \oplus \bracket{m^2,L^2}$ \\
6f & $\bracket{C'=C+2m^1+L^1+2m^2+L^2,F} \oplus \bracket{m^1,L^1} \oplus \bracket{m^2,L^2}$ \\ \bottomrule
\end{longtable}
and the result is as in the table.
\end{proof}
\begin{thm}\label{singu}
Let $(X, \varphi )$ be a non-symplectic automorphism of order three on a $K3$ surface. 
Let $Z$ be the log del Pezzo surface constructed in Proposition \ref{howto}.
If $\varphi$ is of Jacobian type, then $\mathrm{Sing} (Z)$ and the Picard number $\rho (Z)$  
are as in the table of Theorem \ref{ellip3}.
If $\varphi$ is of non-Jacobian type, then $\mathrm{Sing} (Z)$
is one of the following possibilities, and $\rho (Z)$ can be seen as 
\[ \rho(Z) = a - \mathrm{rk} (\text{rational double points}),\]
where $a=2,4,6$ according to No. 2, 4, 6.
\end{thm}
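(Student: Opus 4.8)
The plan is to compute $\mathrm{Sing}(Z)$ locally, fiber by fiber along the $\varphi$-stable elliptic fibration $f$ provided by Theorems \ref{ellip3} and \ref{ellip4}. Recall from Proposition \ref{howto} that $Z$ is the image of $X \stackrel{\nu}{\rightarrow} X_0 \stackrel{\pi}{\rightarrow} Z$, where $\nu$ contracts precisely the $(-2)$-curves disjoint from $C=C^{(g)}$ and $\pi$ is the quotient $X_0/\varphi$. Since $\nu(C)$ is a fixed curve that maps isomorphically and avoids $\mathrm{Sing}(Z)$, every singular point of $Z$ is the image of either an isolated fixed point of $\varphi$ or of a rational double point of $X_0$, i.e. a contracted configuration of $(-2)$-curves. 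The first task is therefore to list, for each reducible fiber, which components are disjoint from $C$ (hence contracted) and which fixed points are isolated; this is exactly the content of Proposition \ref{fiberwise}, read off separately in the bisection case $(C,F)=2$ of the Jacobian types and the trisection case $(C,F)=3$ of the non-Jacobian types. The $(-2)$-curves disjoint from $C$ are organized by Lemmas \ref{sectio}, \ref{nocurve1} and \ref{nocurve2}: they are the fixed $(-2)$-curves (in particular the fixed section), the fiber components disjoint from $C$, and --- only in the non-Jacobian case --- the $\varphi$-orbits of disjoint sections.

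Next I assemble the local quotient singularity from this data and identify it in Table \ref{CLTS3}. An isolated fixed point carries the local action $A$ of (\ref{local}), so its image is a $\frac{1}{3}(1,1)$-point resolved by a single $(-3)$-curve; this is the index-three \emph{core}. In the Jacobian cases the contracted $(-2)$-curves adjacent to such a core --- the fixed section together with the fiber components disjoint from $C$ --- prolong it into a chain or a fork whose ends carry the discrepancies computed as in Section \ref{singularities}; matching the length and the end-discrepancies against Table \ref{CLTS3} yields the symbols $A_l(\alpha,\beta)$ and $D_l(\alpha)$ of Theorem \ref{ellip3}. Concretely a fiber of type $II$ contributes nothing, a fiber of type $IV$-(ii) or $I_0^*$-(iii) contributes the $A_2$-part attached to the section, and the fibers $IV^*$-(i), $II^*$ contribute the longer $D$-tails. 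The $\varphi$-permuted configurations disjoint from $C$ --- the three simple components of an $I_0^*$-(iii) fiber, the permuted arms of an $IV^*$-(ii) fiber, and the disjoint sections of Lemma \ref{nocurve2} --- are instead identified by $\pi$ and descend to ordinary rational double points; in particular replacing an $IV$-(ii) fiber by an $I_0^*$-(iii) fiber splits off exactly one extra $A_1$, which accounts for the additional nodes in the tables.

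For the Picard number I argue lattice-theoretically. Because $\varphi$ is non-symplectic it has no invariant transcendental class, and pulling back through the finite quotient $\pi$ and the contraction $\nu$ identifies $\mathrm{Pic}(Z)\otimes\mathbb{Q}$ with the invariant part $(R^{\perp}\otimes\mathbb{Q})^{\varphi}$, where $R\subset S_X$ is the negative-definite sublattice spanned by the contracted $(-2)$-curves. Since $R$ is $\varphi$-stable and its non-invariant part is orthogonal to $S_X^{\varphi}$, an invariant vector is orthogonal to $R$ exactly when it is orthogonal to $R^{\varphi}=R\cap S_X^{\varphi}$, so that
\[ \rho(Z) = \rk(S_X^{\varphi}) - \rk(R^{\varphi}). \]
In the non-Jacobian cases there are no fixed $(-2)$-curves and $C$ is a trisection meeting every component that survives as a curve on $Z$ (the central components of $I_0^*$-(iii), $IV^*$-(ii) and the components carrying the intermediate fixed points); by Lemma \ref{p1} any $(-2)$-curve disjoint from $C$ then lies in a $\varphi$-orbit of size three, so no contracted curve feeds an index-three core and $R^{\varphi}$ is precisely the root lattice of the rational double points. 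As $\rk(S_X^{\varphi})=a$ equals $2,4,6$ for No. 2, 4, 6, this gives $\rho(Z)=a-\rk(\text{rational double points})$. In the Jacobian cases the same identity, now with $R^{\varphi}$ also containing the fixed section and the contracted fiber components, reproduces the $\rho(Z)$ column of Theorem \ref{ellip3}.

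I expect the main difficulty to lie in the discrepancy bookkeeping of the second step: checking that each assembled graph has not only the correct $ADE$ shape but also the correct discrepancies $-\alpha/3,-\beta/3$ at its ends, so that it is genuinely the singularity $A_l(\alpha,\beta)$ or $D_l(\alpha)$ of Table \ref{CLTS3} rather than another one with the same underlying diagram. This forces one to track precisely how the $\frac{1}{3}(1,1)$-core meets the images of the contracted $(-2)$-curves and to use the recursive continued-fraction description of Section \ref{singularities}; the $I_0^*$-(iii) fibers, which simultaneously prolong an index-three core and split off an $A_1$, will be the most delicate to handle.
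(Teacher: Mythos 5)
Your overall route is the same as the paper's: identify all $(-2)$-curves disjoint from $C$ via Lemmas \ref{sectio}, \ref{nocurve1}, \ref{nocurve2}, read off isolated fixed points and contracted configurations fiber by fiber from Proposition \ref{fiberwise}, and match the resulting resolution graphs against Table \ref{CLTS3}. Your lattice-theoretic derivation of the Picard number, $\rho(Z)=\rk S_X^{\varphi}-\rk R^{\varphi}$ with the averaging argument $(v,r)=\frac{1}{3}(v,r+\varphi r+\varphi^2 r)$, is correct and is actually a cleaner justification than anything the paper writes down (the paper simply asserts the $\rho$ column); note only that $R^{\varphi}$ is not isometric to the root lattice of the rational double points (an orbit of three disjoint $(-2)$-curves contributes a vector of square $-6$), but it does have the same rank, which is all the formula needs.

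However, there is a genuine gap in the non-Jacobian case. Lemma \ref{nocurve2} lists the possible classes $D=E+\varphi(E)+\varphi^2(E)$ and says each class is realized by at most one orbit of sections, but it says nothing about which classes can be realized \emph{simultaneously}. Your proposal computes $\mathrm{Sing}(Z)$ for a given set of realized classes but never determines which sets are admissible, and without that step you cannot reproduce the exact table: for instance in No.\ 4b you could not rule out the combination $A_1(1)+2A_1+A_2$ (both classes realized at once), and in No.\ 6a you could not justify the bound $k\leq 3$ when Lemma \ref{nocurve2} lists nine candidate classes. This is precisely the point where the paper's proof does real work: if $D_1\neq D_2$ are two such classes, both represented by disjoint unions of $(-2)$-curves, then they have no common components, so coexistence forces $(D_1,D_2)\geq 0$; the computation $(C'-2F,\,C'-F-3m-2L)=-3<0$ in No.\ 4b therefore excludes the mixed case, and the analogous pairwise intersection computations cut the non-Jacobian lists down to the stated ones. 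Relatedly, the difficulty you flag at the end (discrepancy bookkeeping) is not where the subtlety lies: once the contracted configuration on $Z_r$ is known, the chain $(-6)$--$(-1)$--$(-3)$ contractions determine the symbols in Table \ref{CLTS3} mechanically; the coexistence question above is the step your argument is missing.
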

\begin{longtable}{c|lll}\toprule
No. & $\mathrm{Sing} (Z)$ & & \\ \midrule
2 & (nonsing.) & $A_1$ & \\ \midrule
4a & $A_1(1)$ & $A_1(1)+A_1$ & \\ 
4b & $A_1(1)+A_1$ & $A_1(1)+2 A_1$ & $A_1(1)+A_2$ \\
4c & $A_1(1)+A_2$ & $A_1(1)+A_3$ & \\ \midrule
6a & $2A_1(1)+ kA_1$ & $(0\leq  k\leq  3)$ & \\
6b & $2A_1(1)+ A_1$ & $2A_1(1)+ A_1$ & \\
   & $2A_1(1)+ A_2$ & $2A_1(1)+ A_1+A_2$ & \\
6c & $2A_1(1)+ 2A_1$ & $2A_1(1)+ 3 A_1$ & $2A_1(1)+ A_1+A_2$ \\
   & $2A_1(1)+ A_1+A_3$ & $2A_1(1)+ 2A_2$ & $2A_1(1)+ A_3$ \\
6d & $2 A_1(1)+A_2$ & $2 A_1(1)+A_3$ & \\
6e & $2 A_1(1)+ A_1+A_2$ & $2 A_1(1)+A_3+A_1$ & $2 A_1(1)+A_4$ \\
6f & $2 A_1(1)+ 2 A_2$ & $2 A_1(1)+A_5$ & \\
\bottomrule
\end{longtable}
\begin{proof}
The construction of $Z$ from $X$ is given by 
\begin{itemize}
\item the contraction of all $(-2)$-curves disjoint from $C^{(g)}$, and
\item the quotient by $\varphi$.
\end{itemize}
Equivalently we can construct $Z$ as in the following diagram:
\begin{center}
\begin{picture}(300,60)
\put(77,45){$X$}
\put(89,45){\vector(1,-1){30}}
\put(112,5){$Y=X/\varphi$}
\put(169,45){\vector(-1,-1){30}}
\put(169,45){$Z_r$}
\put(181,45){\vector(1,-1){30}}
\put(211,5){$Z$}
\put(107,33){$\pi$}
\put(149,32){$\nu$}
\put(199,33){$\sigma $}
\end{picture}
\end{center}
where $\nu$ is the minimal resolution and $\sigma $ is the contraction away from 
the transform of $C^{(g)}$ ($Z_r$ is the one called right resolution of $Z$, 
see the next section).
In this diagram,
a singularity $A_1(1)$ on $Y$ appears from an isolated fixed point on $X$.
A $(-2)$-curve $E$ on $X$ which is preserved and not fixed by $\varphi$ is 
mapped to a $(-1)$-curve on $Z_r$. We can compute the diagram explicitly for each case of 
Theorems \ref{ellip3}, \ref{ellip4}.
Since we already know all the curves disjoint from $C^{(g)}$, Lemmas 
\ref{sectio}, \ref{nocurve1} and \ref{nocurve2}, the computation 
of $\mathrm{Sing} (Z)$ can be done. 

The computations for Jacobian cases are straightforward. We can draw a detailed picture using 
Theorem \ref{ellip3}, Lemmas \ref{sectio}, \ref{nocurve1}. We omit the explanation.

For non-Jacobian cases, we take up No. 4b case for example.
The other cases are done in a similar way.
In No. 4b we have a $I_0^*$-(iii) fiber. By Lemma \ref{nocurve2} we have the following 
possibilities.
\begin{enumerate}
\item There are no sections.
\item Only $D\sim C'-2F$ is realized by sections.
\item Only $D\sim C'-F-3m-2L$ is realized by sections.
\item Both $D\sim C'-2F$ and $D\sim C'-F-3m-2L$ are realized by sections.
\end{enumerate}
But (4) does not occur, because $(C'-2F, C'-F-3m-2L)=-3< 0$ hence it cannot happen that 
both are effective divisors without common components.
In (1) we have one $A_1(1)$ corresponding to the isolated fixed point and 
one $A_1$ corresponding to the three permuted simple components of $I_0^*$-(iii).
In (2) the three sections produce one more $A_1$. On the other hand in (3) the three
sections intersect the three components of $I_0^*$-(iii) and produce $A_2$ instead of $A_1$. 
\end{proof}
\begin{cor}
Except for the case $S_X^{\varphi}\simeq U(3)$, we obtain  
a log del Pezzo surface of index three by Proposition \ref{howto}.
\end{cor}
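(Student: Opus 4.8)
The plan is to read off the index of $Z$ directly from the classification of $\mathrm{Sing}(Z)$ already obtained in Theorems \ref{ellip3} and \ref{singu}. By Proposition \ref{howto} the index of $Z$ divides $3$, so it is $1$ or $3$, and it equals $1$ exactly when $K_Z$ is Cartier, that is, when every (log terminal) singularity of $Z$ is Gorenstein and hence Du Val. Thus the statement is equivalent to the assertion that $\mathrm{Sing}(Z)$ contains a genuine index-three singularity, one of the types $A_l(\alpha,\beta)$ or $D_l(\alpha)$ of Table \ref{CLTS3}, if and only if $S_X^{\varphi}\not\simeq U(3)$.

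I would establish this by inspecting the two tables row by row. In the Jacobian cases, Theorem \ref{ellip3} attaches to every $S_X^{\varphi}$ a singularity carrying discrepancy data, namely one of $A_1(2)$, $A_2(1,2)$, $A_3(1,1)$, $D_4(1)$, $D_4(2)$, $D_5(1)$, $D_5(2)$, $D_6(1)$, each of index three. In the non-Jacobian cases, Theorem \ref{singu} attaches an $A_1(1)$ summand to every entry of No. 4 and No. 6, again of index three; the sole exception is No. 2, i.e.\ $S_X^{\varphi}\simeq U(3)$, for which $\mathrm{Sing}(Z)$ is either empty or a single $A_1$. Since $A_1$ is Du Val, in that case $K_Z$ is Cartier and the index is $1$, while in every other case $K_Z$ is not Cartier and the index is exactly $3$. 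This is the claim.

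The geometric mechanism behind the dichotomy, which also serves as a check, is that isolated fixed points force index three: at such a point the local action is the matrix $A$ of (\ref{local}), so $X/\varphi$ acquires a $\frac{1}{3}(1,1)$ singularity of index three. By the fixed-locus table of Proposition \ref{order3}, the only fixed lattices whose $X^{\varphi}$ has no isolated point are No. 1 ($U$) and No. 2 ($U(3)$). The delicate case is therefore No. 1, where the heuristic ``isolated point $\Rightarrow$ index three'' does not apply: there the index-three singularity $A_1(2)$ of Theorem \ref{ellip3} originates not from an isolated point but from the fixed section of $f$, a $(-2)$-curve disjoint from $C^{(g)}$, which is therefore contracted by the morphism $X\to X_0$ of Proposition \ref{howto} to an $A_1$ point that the order-three quotient turns into $A_1(2)$. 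In the remaining lattice $U(3)$ there is neither an isolated fixed point nor such a contracted fixed section, which is exactly why $Z$ stays Gorenstein and its index drops to $1$. The only thing that genuinely needs attention is thus this separation of case No. 1 from case No. 2; everything else is an immediate reading of the two tables.
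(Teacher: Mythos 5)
Your proposal is correct and is essentially the paper's own argument: the corollary is stated without a separate proof precisely because it follows by inspecting the tables of Theorems \ref{ellip3} and \ref{singu}, where every case except No.~2 ($S_X^{\varphi}\simeq U(3)$) contains a singularity of type $A_l(\alpha,\beta)$ or $D_l(\alpha)$, while No.~2 yields only a smooth surface or a single Du Val $A_1$, forcing index $1$ there and index exactly $3$ elsewhere (since Proposition \ref{howto} bounds the index by $3$). Your supplementary check on the geometric origin of the index-three points, including the correct identification of the $A_1(2)$ in case No.~1 as coming from the contracted fixed section rather than an isolated fixed point, matches the mechanism described in the proof of Theorem \ref{singu}.
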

\begin{rem}\label{nontoric}
In cases other than No. 7 and 10, the log del Pezzo surface $Z$ with the 
maximal rank of rational double points has the Picard number $\rho (Z)=1$. 
In particular in No.s 8d, 9 and 11b we get $Z$ with non-cyclic quotient singularities, 
hence different from (quasi-smooth) toric examples. 
\end{rem}

\erase{\begin{longtable}{|c|c|c|}
\hline
No. & $S_{X}^{\varphi}$ & Singularities of a log del Pezzo surface \\
\hline
1 & $U$ & $A_{1}(2)$ \\
\hline
2 & $U(3)$ & non \\
\hline
3 & $U\oplus A_{2}$ & $A_{2}(12)$  \\
\hline
4 & $U(3)\oplus A_{2}$ & $A_{1}(1)$ \\
\hline
5 & $U\oplus A_{2}^{\oplus 2}$ & $A_{3}(11)$ \\
\hline
6 & $U(3)\oplus A_{2}^{\oplus 2}$ & $A_{1}(1) \times 2$  \\
\hline
7 & $U\oplus E_{6}$ & $D_{4}(2)$  \\
\hline
8 & $U\oplus A_{2}^{\oplus 3}$ & $D_{4}(1)$ \\
\hline
9 & $U\oplus E_{8}$ & $D_{5}(2)$ \\
\hline
10 & $U\oplus E_{6}\oplus A_{2}$ & $D_{5}(1)$ \\
\hline
11 & $U\oplus E_{8}\oplus A_{2}$ & $D_{6}(1)$ \\
\hline
\caption[]{N\'{e}ron-Severi lattices and singularities}\label{SandS}
\end{longtable}}

\section{From log del Pezzo surfaces to $K3$ surfaces}\label{gyaku}

In this section we prove the following theorem. 
\begin{thm}\label{cover}
Let $Z$ be a log del Pezzo surface of index three.
Assume that it satisfies the multiple smooth divisor property, namely
the linear system $|-3K_{Z}|$ contains a divisor $2C$ 
with $C$ a non-singular curve which does not meet singularities. Then
there exist a $K3$ surface $X$ and a non-symplectic automorphism $\varphi$ 
of order three of elliptic type on $X$ such that
$Z$ can be obtained from $(X,\varphi )$ by the construction of Proposition
\ref{howto}.
\end{thm}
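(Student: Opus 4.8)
The plan is to realize $X$ as the minimal resolution of the cyclic triple cover of $Z$ determined by the multiple smooth divisor property, and then to verify that this cover is a $K3$ surface carrying the required automorphism. The starting observation is that the relation $-3K_Z\sim 2C$ says precisely that $(Z,\tfrac{2}{3}C)$ is a log Calabi--Yau pair of index three: $K_Z+\tfrac{2}{3}C\sim_{\mathbb{Q}}0$ while $3(K_Z+\tfrac{2}{3}C)\sim 0$. Associated to such a pair there is a canonical (index-one) $\mathbb{Z}/3$-cover
\[
\pi\colon X_0=\mathrm{Spec}_Z\Bigl(\bigoplus_{i=0}^{2}\mathcal{O}_Z\bigl(\lfloor -i(K_Z+\tfrac{2}{3}C)\rfloor\bigr)\Bigr)\longrightarrow Z,
\]
on which $K_{X_0}\sim 0$. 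First I would construct this cover and record its Galois $\mathbb{Z}/3$-action, whose generator I call $\varphi$.

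The next step is the local analysis of $\pi$, which is where the index-three classification of Section \ref{singularities} enters. Along $C$ the fractional coefficient $2/3$ has denominator equal to the covering degree, so the normalized cover is \emph{smooth} and totally ramified over $C$ (locally $w^3=t^2$ normalizes to a smooth, totally ramified cover). Over each singular point the boundary is trivial, so $X_0$ acquires there exactly the index-one cover $\mathbb{C}^2/(G\cap\mathrm{SL}_2(\mathbb{C}))$ of the index-three singularity $\mathbb{C}^2/G$ with $[G:G\cap\mathrm{SL}_2]=3$; since $G\cap\mathrm{SL}_2$ is a small subgroup of $\mathrm{SL}_2(\mathbb{C})$, this is a Du Val singularity. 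Thus $X_0$ is normal with only rational double points, all lying over $\mathrm{Sing}(Z)$ and disjoint from the ramification curve. I would then let $\nu\colon X\to X_0$ be the minimal resolution; it is crepant, so $K_X\sim\nu^*K_{X_0}\sim 0$.

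It remains to identify $X$ as a $K3$ surface and to confirm that $\varphi$ is non-symplectic of elliptic type. Since a log del Pezzo surface is rational, $H^i(Z,\mathcal{O}_Z)=0$ for $i>0$; decomposing $\pi_*\mathcal{O}_{X_0}$ into its three eigensheaves and applying Kawamata--Viehweg (log del Pezzo) vanishing to the summands $\mathcal{O}_Z(\lfloor -i(K_Z+\tfrac{2}{3}C)\rfloor)$ gives $H^1(X_0,\mathcal{O}_{X_0})=0$, and rationality of Du Val singularities yields $H^1(X,\mathcal{O}_X)=0$. Together with $K_X\sim 0$ this forces $X$ to be a $K3$ surface rather than abelian. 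Because $\pi$ ramifies in codimension one, $\varphi$ acts nontrivially on $H^{2,0}(X)$, so it is non-symplectic of order three. The ramification curve $\widetilde{C}\subset X^{\varphi}$ maps isomorphically to $C$, and adjunction on $Z$ gives $2g(\widetilde C)-2=(K_Z+C)\cdot C=\tfrac13 C^2>0$ (using $K_Z\equiv-\tfrac23 C$ and $C^2=\tfrac94 K_Z^2>0$); hence $g(\widetilde C)\geq 2$ and $\varphi$ is of elliptic type.

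Finally I would check that Proposition \ref{howto} applied to $(X,\varphi)$ returns $Z$. The curve $\widetilde C$ is nef and big on $X$, being proportional to $\pi^*(-K_Z)$ with $-K_Z$ ample, and the only curves it contracts are the exceptional $(-2)$-curves introduced by $\nu$, which are disjoint from $\widetilde C$; hence $\mathrm{Proj}\bigoplus_m H^0(X,\mathcal{O}_X(m\widetilde C))=X_0$ and $X_0/\varphi=Z$, exactly the construction of Proposition \ref{howto}. The main obstacle is the cohomological step: proving $H^1(X,\mathcal{O}_X)=0$ (equivalently, excluding the abelian case) requires the vanishing theorem for the eigensheaves on the \emph{singular} surface $Z$, and one must also make the global $\mathbb{Z}/3$-cover genuinely exist, checking that the cube-root (torsion) data is consistent at the singular points and has order exactly three. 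This is precisely where the discrepancy bookkeeping of the right resolution $Z_r$ together with Table \ref{CLTS3} is used, and where the hypothesis that the index equals three is essential.
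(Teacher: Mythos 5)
Your strategy is correct and produces the same surface as the paper's, but by a genuinely different route. The paper never forms the cover over $Z$ itself: it first constructs the \emph{right resolution} $Z_r\to Z$ (blowing up the minimal resolution until the curves of discrepancy $-1/3$ and $-2/3$ become disjoint $(-3)$- and $(-6)$-curves), writes $-3K_{Z_r}\sim 2C+\sum E_i+2\sum F_i$, takes the normalized triple cover of $Z_r$ simply branched along the \emph{disjoint smooth} divisor $C\sqcup(\sqcup E_i)\sqcup(\sqcup F_i)$, and contracts the resulting disjoint $(-1)$-curves to reach $X$ with $3K_X\sim 0$; it then proves $H^1(\mathcal{O})=0$ via the Esnault--Viehweg eigensheaf decomposition on the smooth rational surface $Z_r$, where only classical Kawamata--Viehweg vanishing is needed. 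Your route---the index-one cover $X_0$ of the pair $(Z,\tfrac23 C)$ taken directly over $Z$, followed by the crepant minimal resolution---is conceptually cleaner: your $X_0$ is literally the surface of Proposition \ref{howto}, so the final compatibility check ($\widetilde C$ nef and big, contracting exactly the $\nu$-exceptional curves, $X_0/\varphi=Z$) becomes transparent, a point the paper leaves largely implicit. Your local analysis is also sound: total ramification along $C$ (because $\gcd(2,3)=1$) even makes irreducibility of the cover automatic, so the ``order exactly three'' consistency you worry about takes care of itself; and the index-one covers of the index-three singularities are Du Val since finite subgroups of $\mathrm{SL}_2(\mathbb{C})$ are small.

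What your route costs is exactly the step you flagged and did not carry out: the vanishing of $H^1$ of the eigensheaves on the \emph{singular} klt surface $Z$, where standard Kawamata--Viehweg does not literally apply to Weil divisorial sheaves. This is the one genuine gap, though a fillable one. Note that for $i=2$ no vanishing theorem is needed: $\lfloor -2(K_Z+\tfrac23C)\rfloor=-2K_Z-2C\sim K_Z$ (using $2C\sim -3K_Z$), so that eigensheaf is $\omega_Z$ and $H^1(Z,\omega_Z)\cong H^1(Z,\mathcal{O}_Z)^{\ast}=0$ by Serre duality on the Cohen--Macaulay surface $Z$. The real issue is $i=1$: $\mathcal{O}_Z(-K_Z-C)=\mathcal{O}_Z(K_Z+N)$ with $N=-2K_Z-C$ ample and $\mathbb{Q}$-Cartier, and here one must invoke a Sakai-type vanishing for Weil divisors on normal surfaces, or else push the computation to a resolution---which is in effect what the paper's detour through $Z_r$ accomplishes while staying within classical tools. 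Two smaller imprecisions: your analysis of $\mathrm{Sing}(Z)$ treats only index-three points, but $Z$ (of global index three) may also carry Du Val points, over which your cover is \'etale and the same Du Val singularities simply lift to $X_0$, so the conclusion stands; and in the surface classification step one should exclude Enriques, abelian \emph{and} bielliptic surfaces, which $3K_X\sim 0$ together with $q=0$ indeed does.
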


The proof uses standard constructions and classification theory of 
surfaces. We begin with the following remark.
By \cite[III, Corollary 7.9.]{hartshorne} $C$ is connected.
Let $g$ be the genus of $C$. 
Since $C$ is located in the smooth locus, and by the conditions, 
the genus formula shows 
\begin{equation}\label{fff}
 2g-2=(C^2)+(C,K_Z)=-(C,K_Z)/2>0.
\end{equation} 
Hence $g\geq 2$. 

\subsection{Right resolution.}
Let $\tilde{\sigma }: \widetilde{Z}\rightarrow Z$ be the minimal resolution of singularities.
We denote by $E_i$ an exceptional curve over a singularity of index three and 
$a_i$ its discrepancy.
We consider the following blowings up of $\widetilde{Z}$. 
\begin{itemize}
\item If exceptional curves satisfy 
$(E_{i},E_{j})=1$ and $a_{i}=-1/3$, $a_{j}=-2/3$ then we blow up at $E_{i}\cap E_{j}$.
\item If exceptional curves satisfy $(E_{i},E_{j})=1$ and $a_{i}=a_{j}=-2/3$, then
after the blow up at $E_{i}\cap E_{j}$, we again blow up the two intersection points
of the three exceptional curves. 
\item We remark that
there exist no exceptional curves $E_{i}$, $E_{j}$ such that 
$(E_i ,E_i)=1$ and both have discrepancies $-1/3$, see Table \ref{CLTS3}. 
\end{itemize}
We do this process for all pairs $(E_i, E_j)$.
Then we obtain the surface $Z_r \stackrel{\sigma }{\rightarrow} Z$, 
which we call the \textit{right resolution} of $Z$, 
whose exceptional divisor over a singularity of index three is 
a successive union of the unit chain 
\[
\xygraph{
*+[o][F-]{-3}([]!{+(0,-.5)} {-\frac{1}{3}}) - [r]
*+[o][F-]{-1}([]!{+(0,-.5)} {0}) - [r]
*+[o][F-]{-6}([]!{+(0,-.5)} {-\frac{2}{3}})
},
\]
(or one $(-3)$-curve for $A_1(1)$ or one $(-6)$-curve for $A_1(2)$).
For example, the minimal resolution of $A_3(1,2)$ with components $E_1, E_2, E_3$ in this order
as in Table \ref{CLTS3} will be blown up to the chain 
\[
\xygraph{
*+[o][F-]{-3}([]!{+(0,-.5)} {-\frac{1}{3}})([]!{+(0,.5)} {E_1}) - [r]
*+[o][F-]{-1}([]!{+(0,-.5)} {0}) - [r]
*+[o][F-]{-6}([]!{+(0,-.5)} {-\frac{2}{3}})([]!{+(0,.5)} {E_2}) - [r]
*+[o][F-]{-1}([]!{+(0,-.5)} {0}) - [r]
*+[o][F-]{-3}([]!{+(0,-.5)} {-\frac{1}{3}}) - [r]
*+[o][F-]{-1}([]!{+(0,-.5)} {0}) - [r]
*+[o][F-]{-6}([]!{+(0,-.5)} {-\frac{2}{3}})([]!{+(0,.5)} {E_3})
}.
\]
The point is that curves with nonzero discrepancies are disjoint to each other on $Z_r$.
%Let $m$ be the number of $A_1(1)$ singularities on $Z$. 
Let $p$ (resp. $q$) be the number of $(-3)$-curves (resp. $(-6)$-curves) in the exceptional 
locus $\mathrm{Exc} (\sigma )$ of $Z_r$. 
Now we relabel $(-3)$-curves as $E_i, 1\leq i\leq p$ and $(-6)$-curves as $F_i, 1\leq i\leq q$.
Then the comparison of canonical bundles for $f$ shows
\[K_{Z_r} \equiv \sigma ^* K_Z -\frac{1}{3}\sum_{p} E_i -\frac{2}{3}\sum_{q} F_i. \]

\subsection{Branched covering.}
Since $3K_Z$ is Cartier, we have the relation
\begin{equation}\label{yy}
\begin{split}
-3K_{Z_r}&= \sigma ^*(-3K_Z) + \sum_{p} E_i+2\sum_{q} F_i\\
&\sim 2C + \sum_{p} E_i+2\sum_{q} F_i,
\end{split}
\end{equation}
where we denoted the strict transform of $C$ on $Z_r$ by the same $C$. 
By taking the branched cover
with branch $2C + \sum E_i+2\sum F_i$ together with normalization, we get a triple cover
$\pi \colon \widetilde{X}\rightarrow Z_r$, simply branched over the disjoint union
$C\sqcup (\sqcup E_i)\sqcup (\sqcup F_i)$. 

We put $\widetilde{E}_i=\pi^* (E_i)_{\mathrm{red}}, \widetilde{F}_i=\pi^* (F_i)_{\mathrm{red}}$
and $\widetilde{C}=\pi^* (C)_{\mathrm{red}}$. They are $(-1)$-curves, $(-2)$-curves
and a curve of genus $g$ on $\widetilde{X}$ respectively.  
We have the ramification formula
\begin{equation*}
\begin{split}
3K_{\widetilde{X}}&= 3\pi^* K_{Z_r} + 6\widetilde{C} + 6\sum_{p} \widetilde{E}_i
+6 \sum_{q} \widetilde{F}_i
\end{split}
\end{equation*}
and by substituting (\ref{yy}), we get 
$3K_{\widetilde{X}}\sim 3\sum_{p} \widetilde{E}_i$. 
Since $\widetilde{E}_i$ are disjoint $(-1)$-curves, we can contract them 
and we get a surface $X$ with $3K_X \sim 0$. 

\subsection{$X$ is a $K3$ surface.}
Let us show that $X$ is a $K3$ surface. 
By $3K_X\sim 0$, 
it is a minimal surface with Kodaira dimension $\kappa =0$. 
Recall that the class of minimal algebraic surfaces with $\kappa =0$ consists 
of $K3$ surfaces, Enriques surfaces, abelian surfaces and bi-elliptic (or hyperelliptic) surfaces
\cite{BHPV}. An Enriques surface has $K_Y\not\sim 0$ and $2K_Y\sim 0$, so $X$ is 
not an Enriques surface. 
Among other three surfaces, we can distinguish $K3$ surfaces by showing 
$H^1 (X, \mathcal{O}_X)=0$. 
Clearly this is equivalent to saying $H^1 (\widetilde{X}, 
\mathcal{O}_{\widetilde{X}})=0$. We will prove this vanishing.

By \cite[Claim 3.10]{EV}, putting $B:=2C + \sum E_i+2\sum F_i$, we have 
\begin{equation*}
\begin{split}
\pi_* \mathcal{O}_{\widetilde{X}} &= \mathcal{O}_{Z_r} \oplus 
\biggl( \mathcal{O}_{Z_r}(K_{Z_r} + \lfloor \frac{1}{3}B \rfloor )\biggr) \oplus 
\biggl(\mathcal{O}_{Z_r}(2K_{Z_r} + \lfloor \frac{2}{3}B \rfloor )\biggr) \\
&= \mathcal{O}_{Z_r} \oplus \mathcal{O}_{Z_r} (K_{Z_r}) \oplus 
\mathcal{O}_{Z_r} (2K_{Z_r}+C+\sum F_i).
\end{split}
\end{equation*}
Since $Z_r$ is a rational surface \cite[Lemma 1.3]{AN}, we know $H^1$ of the first two 
components vanish. Hence we have only to show 
\[H^1 \bigl( Z_r, \mathcal{O}_{Z_r} (2K_{Z_r}+C+\sum F_i)\bigr) =0. \]
We use the exact sequence 
\[\begin{split}
0 &\rightarrow \mathcal{O}_{Z_r} (2K_{Z_r}+C+\sum F_i) \\ 
&\rightarrow
\mathcal{O}_{Z_r} (2K_{Z_r}+C+\sum E_i+\sum F_i) \rightarrow \oplus \mathcal{O}_{E_i}(-1) 
\rightarrow 0.
\end{split}\]
Taking the cohomology we obtain 
\begin{equation}\label{aaaa}
\begin{split}
0&=\oplus H^0 (\mathcal{O}_{E_i}(-1)) \rightarrow H^1 (\mathcal{O}_{Z_r} (2K_{Z_r}+C+\sum F_i))\\
&\rightarrow H^1 (\mathcal{O}_{Z_r} (2K_{Z_r}+C+\sum E_i+\sum F_i)).
\end{split}
\end{equation}
In this sequence 
the last term can be written as $H^1 (K_{Z_r}+\lceil K_{Z_r}+C+\frac{1}{3}\sum E_i+
\frac{2}{3}\sum F_i\rceil)$. Since $\frac{1}{3}C$ is nef and big, numerical equivalence
\[K_{Z_r}+C+\frac{1}{3}\sum E_i+\frac{2}{3}\sum F_i\equiv \frac{1}{3}C\] 
shows $H^1 (\mathcal{O}_{Z_r} (2K_{Z_r}+C+\sum E_i+\sum F_i))=0$ by Kawamata-Viehweg
vanishing theorem. Hence the middle term of (\ref{aaaa}) also vanishes and 
we have proved $X$ is a $K3$ surface. 

The rest is clear: the covering transformation
of $\widetilde{X}\rightarrow Z_r$ produces a non-symplectic automorphism of 
order three on $X$. It is of elliptic type because $C$ has 
genus $\geq 2$. Thus Theorem \ref{cover} is proved.

\begin{rem}
It is easy to see from (\ref{fff}) that if $Z$ has the 
multiple smooth divisor property then $K_Z^2=8(g-1)/3$. In particular, it is necessary
for $3K_Z^2$ to be divisible by $8$. This is a useful criterion.
For example, the easiest log del Pezzo surface
$Z=\mathbb{P} (1,1,3)$ has $3K_Z^2=25$ hence it does not satisfy multiple smooth divisor
property.
\end{rem}

\erase{
\[K_{Z_{1}}\equiv f_{1}^{\ast }K_{Z}+\sum a_{i}E_{i}.\] 
If exceptional curves satisfy 
$(E_{i}.E_{j})=1$ and $a_{i}=-1/3$, $a_{j}=-2/3$ or $a_{i}=a_{j}=-2/3$
then we consider the blow-up at the intersection point.
By repeating the argument, we get a sequence of blow-ups  
$\sigma :\widetilde{Z}\rightarrow Z$.

It is easy to see that there exists chains 
generated by a triple exceptional curve with the graph:
\[
\xygraph{
*+[o][F-]{-3}([]!{+(0,-.5)} {-\frac{1}{3}}) - [r]
*+[o][F-]{-1}([]!{+(0,-.5)} {0}) - [r]
*+[o][F-]{-6}([]!{+(0,-.5)} {-\frac{2}{3}})
}.
\]

Let $\tilde{\pi} :\widetilde{X}\rightarrow \widetilde{Z}$ be the 
tiple covering branched along a divisor generated by 
$(-6)$ curves, $(-3)$ curves and $C$.
Since $(-3)$ curves on $\widetilde{Z}$ give $(-1)$ curves on $\widetilde{X}$,
we put $X$ by the contracting these $(-1)$ curves.

In the following examples,  
we construct $K3$ surface $X$ from log del Pezzo surface $Z$.

\subsection{Examples}

\begin{ex}\label{excase3}[Case 3]
Let $Z$ be a log del Pezzo surface of type $A_{2}(12)$.
Then 
\[K_{\widetilde{Z}}=\sigma^{\ast}K_{Z}-\frac{1}{3}E-\frac{2}{3}F\]
where $E$ and $F$ are strict transforms of exceptional curves.
Note $E^{2}=-3$, $F^{2}=-6$ and 
$3K_{\widetilde{Z}}\sim -2C-E-2F$ by the assumption:  $|-3K_{Z}|\ni 2C$. 

By the genus formula and $(3K_{\widetilde{Z}}.C)=-2C^{2}$, we have 
$(E.K_{\widetilde{Z}})=1$, $(F.K_{\widetilde{Z}})=4$ and $(C.K_{\widetilde{Z}})=-4p_{a}(C)+4$.
Hence $3K_{\widetilde{Z}}^{2}=8p_{a}(C)-17$.
Since $K_{\widetilde{Z}}^{2}=K_{Z}^{2}-3\geq -3$, $p_{a}(C)\geq 1$ and 
$K_{\widetilde{Z}}^{2}=-3, 5, 13, \cdots $.
We remark that $\widetilde{Z}$ is rational. 
Thus $(K_{\widetilde{Z}}^{2}, \chi (\widetilde{Z}))=(-3,15)$ or (5,7) by the Noether formula.

On the other hand, since 
$K_{\widetilde{X}}=\tilde{\pi}^{\ast}K_{\widetilde{Z}}-2\tilde{\pi}^{-1}(C+F+E)$, 
we have $-3K_{\widetilde{X}}=-3E$.
Hence $K_{X}\equiv0$. 
If $(K_{\widetilde{Z}}^{2}, \chi (\widetilde{Z}))=(-3,15)$ then $ \chi (\widetilde{X})=37$.
Hence  $\chi (X)=36$. This is a contradiction.
Indeed if $(K_{\widetilde{Z}}^{2}, \chi (\widetilde{Z}))=(5,7)$ then $X$ is a $K3$ surface.
In this case, we remark $p_{a}(C)=4$.
And it is easy to see that the fixed locus of a automorphism 
which is given by the triple cover $\tilde{\pi}$
consists of a isolated point and a non-singular curve with genus 4.
\end{ex}

\begin{ex}\label{excase7}[Case 7]
Let $Z$ be a log del Pezzo surface of type $D_{4}(2)$ and
$f_{1}:Z_{1}\rightarrow Z$ be its minimal resolution. 
Then we have 
\[K_{Z_{1}}=f_{1}^{\ast}K_{Z}-\frac{2}{3}E_{1}-\frac{2}{3}E_{2}-\frac{1}{3}E_{3}-\frac{1}{3}E_{4}\]
where $E_{i}$ are strict transforms of exceptional curves 
which satisfy $E_{1}^{2}=-4$, $E_{2}^{2}=E_{3}^{2}=E_{4}^{2}=-2$ and  
$(E_{1}.E_{2})=(E_{2}.E_{3})=(E_{2}.E_{4})=1$.
Next let $f_{2}:Z_{2}\rightarrow Z_{1}$ be a blow-up at three intersection points on $E_{2}$.
Then we have three exceptional curves $F_{i}$ which are $F_{i}=-1$ and 
$(E_{1}.F_{1})=(E_{4}.F_{1})=1$.
Finally let $f_{3}:\widetilde{Z}\rightarrow Z_{2}$ be a blow-up at two intersection points on $F_{1}$ and 
we put $\sigma=f_{1}\circ f_{2}\circ f_{3}$.
Then 
\[K_{\widetilde{Z}}=\sigma^{\ast}K_{Z}-\frac{2}{3}E_{1}-\frac{2}{3}E_{2}-\frac{1}{3}E_{3}-\frac{1}{3}E_{4}-\frac{1}{3}F_{1}.\]
Here $E_{1}^{2}=E_{2}^{2}=-6$, $E_{3}^{2}=E_{4}^{2}=F_{1}=-3$.

It is easy to see that 
$E_{1}.K_{\widetilde{Z}}=E_{2}.K_{\widetilde{Z}}=4$, 
$E_{3}.K_{\widetilde{Z}}=E_{4}.K_{\widetilde{Z}}=F_{1}.K_{\widetilde{Z}}=$ and 
$C.K_{\widetilde{Z}}=-4p_{a}(C)+4$.
Hence $3K_{\widetilde{Z}}^{2}=8p_{a}(C)-27$. 
Since $K_{\widetilde{Z}}^{2}=K_{Z}^{2}-19/3 \geq -6$ and $\widetilde{Z}$ is rational, 
$(K_{\widetilde{Z}}^{2}, \chi (\widetilde{Z}))=(-1,13)$.

Then we have $K_{\widetilde{X}}=E_{3}+E_{4}+F_{1}$ and $ \chi (\widetilde{X})=27$.
Hence $X$ is a $K3$ surface.
Moreover we can check that the fixed locus of a automorphism 
which is given by the triple cover $\tilde{\pi}$
consists of three isolated points and a non-singular curve with genus 3.
\end{ex}

Similarly in other cases we can construct a $K3$ surface from a log del Pezzo surface.
The following example is one of the cases which are not so.

\begin{ex}
Let $Z$ be a log del Pezzo surface of type $A_{4}(11)$.
Then 
\[K_{\widetilde{Z}}=\sigma^{\ast}K_{Z}-\frac{1}{3}F_{1}-\frac{2}{3}F_{2}-\frac{2}{3}F_{3}-\frac{1}{3}F_{4}\]
where $F_{1}^{2}=F_{4}^{2}=-3$ and  $F_{2}^{2}=F_{3}^{2}=-5$. 
Similarly as Example \ref{excase3}, we have 
$(K_{\widetilde{Z}}^{2}, \chi (\widetilde{Z}))=(0,12)$ or (8,4).

On the other hand, since 
$K_{\widetilde{X}}=\tilde{\pi}^{\ast}K_{\widetilde{Z}}-2\tilde{\pi}^{-1}(C+F_{1}+F_{2}+F_{3}+F_{4})$, 
we have $-3K_{\widetilde{X}}=-3F_{1}-3F_{4}$.
Hence $K_{X}\equiv0$. 
If $(K_{\widetilde{Z}}^{2}, \chi (\widetilde{Z}))=(0,12)$ (resp. (8,4)) 
then $ \chi (\widetilde{X})=14$ (resp. 12).
Hence  $\chi (X)=12$ (resp. 10). This is not a $K3$ surface.
\end{ex}
}

\section{Toric examples}\label{examples}

In this section we collect examples of log del Pezzo surfaces of index three
obtained as toric varieties. 
We thank Dr. T. Okada for providing us with the computer searching program.
For the notation of singularities, we refer to Section \ref{singularities}.

\begin{ex}
We put $Z:=\mathbb{P} (1,1,6)$. 
It is easy to see  that $Z$ is a log del Pezzo surface of index three 
and has a singularity of type $A_{1}(2)$ at $(0,0,1)$.
We remark $|-3/2K_{Z}|=|\mathcal{O}_{Z}(12)|$.
Let $C$ be an element of $\mathcal{O}_{Z}(12)$ defined by 
$\{x^{12}+y^{12}+z^{2}+\text{(terms of degree 12)}=0\}$ 
where $x$, $y$ and $z$ are homogeneous coordinates of $\mathbb{P} (1,1,6)$.
Then a smooth divisor $C$ does not pass through $(0,0,1)$.
Hence $Z$ satisfies the multiple smooth divisor property.
\end{ex}

\begin{ex}
We consider the weighted hypersurface 
$Z\subset \mathbb{P} (1,1,1,3)$ of degree $4$
(We denote by $Z=(4)\subset \mathbb{P} (1,1,1,3)$ the general one). 
Note that $Z$ is a log del Pezzo surface with a singular point of type $A_{1}(1)$.
In particular the singular point is induced by $(0,0,0,1)$.
We remark $\mathcal{O}_{Z}(K_{Z})\simeq \mathcal{O}_{Z}(4-1-1-1-3)=\mathcal{O}_{Z}(-2)$.
Let $C$ be an element of $\mathcal{O}_{Z}(3)$ defined by 
$\{x^{3}+y^{3}+z^{3}+w+\cdots =0\}$
where $x$, $y$, $z$ and $w$ are homogeneous coordinates of $\mathbb{P} (1,1,1,3)$.
Then a smooth divisor $C$ does not pass through $(0,0,0,1)$.
Hence $Z$ satisfies the multiple smooth divisor property.
\end{ex}

The following table contains some examples of log del Pezzo surfaces of index three.
The first eighteen examples are in \cite[Theorem 1.3]{Dais}.
The notation $\bigcirc $ implies that 
a log del Pezzo surface $Z$ satisfies the multiple smooth divisor property.
On the other hand $\times $ is not so.

\begin{center}
\begin{longtable}{cccc}\toprule 
$Z$ & $\mathrm{Sing} (Z)$ & $K^{2}$& $2C\in |-3K_Z|$ \\\midrule
$\mathbb{P} (1,1,3)$ & $A_1(1)$ & 25/3 & $\times $ \\
$\mathbb{P} (1,3,4)$ & $A_1(1)+A_{3}$ & 16/3 & $\bigcirc $ \\
$\mathbb{P} (2,3,5)$ & $A_1(1)+A_1+A_4$ & 10/3 & $\times $ \\
$\mathbb{P} (1,1,2)/C_3$ & $2A_1(1)+A_{5}$ & 8/3 & $\bigcirc $  \\
$\mathbb{P} (1,1,6)$ & $A_1(2)$ & 32/3 & $\bigcirc $ \\ \midrule
$\mathbb{P} (1,6,7)$ & $A_{1}(2)+A_{6}$ & 14/3 & $\times $ \\
$\mathbb{P} (1,3,4)/C_2$ & $A_{1}(2)+A_{1}+A_{7}$ & 8/3 & $\times $ \\
$\mathbb{P} (1,2,3)/C_3$ & $A_{1}(1)+A_{1}(2)+A_{8}$ & 2 & $\times $ \\
$\mathbb{P}^2/ C_9$ & $2A_{2}(1,2)+A_{8}$ & 1 & $\times $ \\
$\mathbb{P} (1,5,9)$ & $A_{2}(1,2)+A_{4}$ & 5 & $\times $ \\ \midrule
$\mathbb{P} (1,2,9)$ & $A_{2}(1,2)+A_{1}$ & 8 & $\bigcirc $ \\
$\mathbb{P} (1,2,3)/C_3$ & $A_{2}(1,2)+A_{2}+A_{5}$ & 2 & $\times $ \\
$\mathbb{P} (1,1,2)/C_2\times C_3$ & $2A_{1}(2)+A_{11}$ & 4/3 & $\times $ \\
$\mathbb{P} (1,1,6)/C_2$ & $A_{3}(1,1)+2A_{1}$ & 16/3 & $\bigcirc $  \\
$\mathbb{P} (1,4,15)$ & $A_{2}(2,2)+A_{3}$ & 20/3 & $\times $ \\ \midrule
$\mathbb{P} (1,1,3)/C_5$ & $A_{2}(2,2)+2A_{4}$ & 5/3 & $\times $ \\
$\mathbb{P} (1,2,9)/C_2$ & $A_{3}(1,2)+A_1+A_{3}$ & 4 & $\times $ \\
$\mathbb{P} (1,1,6)/C_4$ & $A_{3}(2,2)+2A_{3}$ & 8/3 & $\times $ \\ \midrule 
\midrule
$(4)\subset \mathbb{P} (1,1,1,3)$ & $A_1(1)$ & 16/3 & $\bigcirc $ \\
$(7)\subset \mathbb{P} (1,1,1,6)$ & $A_1(2)$ & 14/3 &  $\times $ \\
$(5)\subset \mathbb{P} (1,1,2,3)$ & $A_1(1)+A_1$ & 10/3 & $\times $ \\
$(8)\subset \mathbb{P} (1,1,2,6)$ & $A_1(2)+A_1$ & 8/3 & $\times $ \\
$(10)\subset \mathbb{P} (1,1,2,9)$ & $A_2(1,2)$ & 5 &$\times $ \\ \midrule 
$(6)\subset \mathbb{P} (1,1,3,3)$ & $A_1(1)$ & 8/3 & $\times $ \\
$(9)\subset \mathbb{P} (1,1,3,6)$ & $A_1(1)+A_1(2)$ & 2 & $\times $ \\
$(16)\subset \mathbb{P} (1,1,4,15)$ & $A_2(2,2)$ & 20/3 & $\times $ \\
$(10)\subset \mathbb{P} (1,1,5,9)$ & $A_2(1,2)$ & 8 & $\bigcirc $ \\
$(12)\subset \mathbb{P} (1,1,6,6)$ & $A_1(2)$ & 4/3 & $\times $ \\ \midrule
$(12)\subset \mathbb{P} (1,2,3,9)$ & $A_2(1,2)+A_2$ & 2 & $\times $ \\
$(18)\subset \mathbb{P} (1,2,9,9)$ & $A_2(1,2)$ & 1 & $\times $ \\
$(14)\subset \mathbb{P} (1,2,7,12)$ & $A_3(1,1)+A_1$ & 16/3 & $\bigcirc $ \\
$(15)\subset \mathbb{P} (1,5,6,9)$ & $A_2(1,2)+A_5$ & 2 & $\times$ \\ \bottomrule
\end{longtable}
\end{center}

\begin{rem}
For example, 
we consider $\mathbb{P} (1,2,9)/C_2$.
The group $C_2=\langle g \rangle $ acts on $\mathbb{P} (1,2,9)$ via 
$g:(x,y,z)\rightarrow (x,-y,-z)$.

We note $(x,-y,-z)=(e_{4}x,y,e_{4}^{3}z)=(-x,-y,z)$ 
where $e_{k}$ is a primitive $k$-th root of unity.
The fixed locus of the action consists of three points.

At $(0,0,1)$, we have the open set $\mathbb{A}_{x,y}^{2}/\langle g, h \rangle $
where $g:(x,y)\rightarrow (-x,-y)$ and $h:(x,y)\rightarrow (e_{9}x,e_{9}^{2}y)$.
Then we have a singularity of type $1/18(1,5)$.

Similarly in other points we can calculate.
Hence we have singularities of type $A_{1}$ at $(1,0,0)$, 
of type $A_{2}$ at $(0,1,0)$ and of type $A_{3}(1,2)$ at $(0,0,1)$.
\end{rem}

\begin{rem}
There are two $\mathbb{P} (1,2,3)/C_3$ in Dais's list \cite[Theorem 1.3]{Dais}.
Indeed the group $C_{3}=\langle g_{i} \rangle $ has two actions on $\mathbb{P} (1,2,3)$, 
$g_{1}:(x,y,z)\rightarrow (x,e_{3}y,e_{3}z)$ and 
$g_{2}:(x,y,z)\rightarrow (x,e_{3}y,e_{3}^{2}z)$ 
where $e_{3}$ is a primitive third root of unity.

In the case of $C_{3} =\langle g_{1} \rangle $, we note
$(x,e_{3}y,e_{3}z)=(e_{3}x,y,e_{3}z)=(e_{9}^{2}x,e_{9}^{7}y,z)$.
Then we have singularities of type $A_{1}(1)$ at $(1,0,0)$, 
of type $A_{1}(2)$ at $(0,1,0)$ and of type $A_{8}$ at $(0,0,1)$.

In the case of $C_{3} =\langle g_{2} \rangle $, we note
$(x,e_{3}y,e_{3}^{2}z)=(e_{3}x,y,e_{3}^{2}z)=(e_{9}x,e_{9}^{5}y,z)$.
Then we have singularities of type $A_{2}$ at $(1,0,0)$, 
of type $A_{5}$ at $(0,1,0)$ and of type $A_{2}(1,2)$ at $(0,0,1)$.
\end{rem}

\begin{rem}
The group $C_{3}$ acts on $\mathbb{P}(1,1,2)$ via 
$g:(x,y,z)\rightarrow (x,e_{3}y,e_{3}z)$.

The group $C_{2}$ acts on $\mathbb{P}(1,3,4)$ via
$g:(x,y,z)\rightarrow (x,-y,-z)$.

The group $C_{9}$ acts on $\mathbb{P}^{2}$ via
$g:(x,y,z)\rightarrow (x,e_{9}y,e_{9}^{2}z)$.

The group $C_{2}\times C_{3}$ acts on $\mathbb{P}(1,1,2)$  via
$g:(x,y,z)\rightarrow (-x,e_{3}y,-e_{3}z)$.

The group $C_{2}$ acts on $\mathbb{P}(1,1,6)$  via
$g:(x,y,z)\rightarrow (x,-y,-z)$.

The group $C_{5}$ acts on $\mathbb{P}(1,1,3)$ via
$g:(x,y,z)\rightarrow (x,e_{5}y,e_{5}^{4}z)$.

The group $C_{4}$ acts on $\mathbb{P}(1,1,6)$ via 
$g:(x,y,z)\rightarrow (x,e_{4}y,e_{4}^{3}z)$.
\end{rem}


\begin{thebibliography}{BHPV}
\bibitem[AN]{AN}
V.~Alexeev, V.V.~Nikulin.
\textit{Del Pezzo and $K3$ surfaces}, 
MSJ Memoirs, Vol. \textbf{15}, Mathematical Society of Japan, Tokyo, 2006.
  
\bibitem[AS]{AS}
M.~Artebani, A.~Sarti, 
Non-symplectic automorphisms of order 3 on $K3$ surfaces, 
Math. Ann., \textbf{342} (2008), 903-921.

%\bibitem[AST]{AST}
%M.~Artebani, A.~Sarti, S.~Taki, 
%$K3$ surfaces with non-symplectic automorphisms of prime order 
%(with an appendix by S.~Kondo), to appear in Math. Z. 

\bibitem[BHPV]{BHPV}
W.~Barth, K.~Hulek, C.~Peters, A.Van de Ven, 
\textit{Compact Complex Surfaces. Second Enlarged Edition},
Ergebnisse der Mathematik und ihrer Grenzgebiete. 3.Folge.
A  Series of Modern Surveys in Mathematics, \textbf{4}, Springer, 2004. xii+436 pp.

\bibitem[Br]{brieskorn}
E.~Brieskorn,
Rationale Singularit\"{a}ten komplexer Fl\"{a}chen,
Inv. Math., \textbf{4} (1968), 336--358.

\bibitem[Da]{Dais}
D.I.~Dais, 
Classification of Toric log Del Pezzo Surfaces having Picard Number 1 and Index $\leq 3$, 
Result.Math. \textbf{54} (2009), 219--252

\bibitem[EV]{EV}
H.~Esnault and E.~Viehweg,
\textit{Lectures on Vanishing Theorems},
Birkh\"{a}user 1992.

\bibitem[H]{hartshorne}
R.~Hartshorne, 
\textit{Algebraic geometry},
Graduate Texts in Mathematics, \textbf{52},
Springer-Verlag, New York-Heidelberg, 1977. xvi+496 pp. 

\bibitem[Kaw]{kawamata}
Y.~Kawamata,
Crepant blowing-up of 3-dimensional canonical singularities and its application
to degenerations of surfaces,
Annals of Math., \textbf{127} (1988), 93-163.

%\bibitem[KMcK]{keelmckernan}
%S.~Keel and J.~McKernan,
%\textit{Rational curves on quasi-projective surfaces},
%Mem. Amer. Math. Soc. \textbf{140} (1999), no. 669, viii+153 pp.

%\bibitem[Kd]{Kodaira}
%K.~Kodaira, 
%On compact analytic surface II, III, 
%Ann. of Math., \textbf{77} (1963), 563--626; \textbf{78} (1963), 1--40.

\bibitem[KM]{km} 
J.~Koll\'ar, S.~Mori, 
\textit{Birational geometry of algebraic varieties},
Cambridge Tracts in Mathematics, \textbf {134}. Cambridge University Press, Cambridge, 1998. viii+254 pp. 

%\bibitem[Ko]{Kondo1}
%S.~Kondo, 
%Automorphisms of algebraic $K3$ surfaces which act trivially on Picard groups, 
%J. Math. Soc. Japan, \textbf{44} (1992), 75--98.

\bibitem[Na]{Nakayama}
N.~Nakayama,
Classification of log del Pezzo surfaces of index two,
J. Math. Sci. Univ. Tokyo, \textbf{14} (2007), 293-498. 

\bibitem[Ni]{Nikulin}
V.V.~Nikulin, 
Finite automorphism groups of K\"ahlerian $K3$ surfaces, 
Trans. Moscow Math. Soc., \textbf{38} (1980), No 2, 71--135.

%\bibitem[Ni2]{Ni3}
%V.V.~Nikulin, 
%Factor groups of groups of automorphisms of hyperbolic forms with
%respect to subgroups generated by 2-reflections, 
%J. Soviet Math., \textbf{22} (1983), 1401--1476.

\bibitem[O]{oguiso}
K.~Oguiso,
On Jacobian fibrations on the Kummer surfaces of the product of non-isogenous 
elliptic curves,
J. Math. Soc. Japan,
\textbf{41} (1989), 651-680.

%\bibitem[OZ1]{13-19}
%K.~Oguiso and D.-Q.~Zhang, 
%On Vorontsov's theorem on $K3$ surfaces with non-symplectic group actions. 
%Proc. Amer. Math. Soc., \textbf{128} (2000), no. 6, 1571--1580.

%\bibitem[OZ2]{11}
%K.~Oguiso and D.-Q.~Zhang, 
%$K3$ surfaces with order 11 automorphisms, math.AG/9907020. 


%\bibitem[RS]{R-S}
%A.N.~Rudakov and I.R.~Shafarevich, 
%Surfaces of type $K3$ over fields of finite characteristic, 
%J. Soviet Math., \textbf{22} (1983), 1476--1533.


\bibitem[Ta]{Taki}
S.~Taki, 
Classification of non-symplectic automorphisms of order 3 on $K3$ surfaces,
Math. Nachr. \textbf{284} (2011), 124--135.

%\bibitem[Vo]{Vo} 
%S.P.~Vorontsov, 
%Automorphisms of even lattices that arise in connection with automorphisms of algebraic $K3$ surfaces, 
%Vestnik Mosk. Univ. Math. \textbf{38} (1983), 19--21.

\bibitem[Zh]{Zhang}
D.~Q.~Zhang, 
Normal algebraic surfaces with trivial tricanonical divisors,
Publ. RIMS, Kyoto Univ.,
\textbf{33}(1997), 427-442.

\end{thebibliography}
\end{document}